\documentclass[a4paper,11pt]{amsart}
\usepackage{amsmath}
\usepackage{cases}
\usepackage{amsfonts}
\usepackage[colorlinks,linkcolor=blue,citecolor=blue]{hyperref}
\usepackage{latexsym, amssymb, amsmath, amsthm, bbm}
\usepackage[all]{xy}
\usepackage{pgfplots}
\usepackage{mathrsfs}
\usepackage [latin1]{inputenc}
\usepackage[misc]{ifsym}

\DeclareSymbolFont{EulerExtension}{U}{euex}{m}{n}
\DeclareMathSymbol{\euintop}{\mathop} {EulerExtension}{"52}
\DeclareMathSymbol{\euointop}{\mathop} {EulerExtension}{"48}

\allowdisplaybreaks[4]

\setlength{\textwidth}{5.6truein}
\setlength{\textheight}{8.2truein}
\setlength{\topmargin}{-0.13truein}
\setlength{\parindent}{0pt}
\addtolength{\parskip}{5pt}

\def \Ob{\operatorname{Ob}}
\def \id{\operatorname{Id}}
\def \ker{\operatorname{Ker}}
\def \ord{\operatorname{ord}}

\def \dim{\operatorname{dim}}

\def \Id{\operatorname{Id}}
\def \ord{\operatorname{ord}}
\def \Rep{\operatorname{Rep}}

\def \id{\operatorname{Id}}
\def \ker{\operatorname{Ker}}

\numberwithin{equation}{section}

\newtheorem{theorem}{Theorem}[section]
\newtheorem{lemma}[theorem]{Lemma}
\newtheorem{proposition}[theorem]{Proposition}
\newtheorem{corollary}[theorem]{Corollary}
\newtheorem{definition}[theorem]{Definition}
\newtheorem{example}[theorem]{Example}
\newtheorem{remark}[theorem]{Remark}

\newtheorem{convention}[theorem]{Convention}

\begin{document}
\title{Constructing Modular Tensor Categories with Hopf Algebras}
\author{Kun Zhou}\address{Beijing Institute of Mathematical Sciences and Applications, Beijing 101408, China. }\email{kzhou@bimsa.cn}

\subjclass[2020]{16T05, 18M20, 16T25}
\keywords{Factorizable Hopf algebra, Modular tensor category, Abelian extension, Drinfel'd double, $n$-rank Taft algebra.}

\date{}
\maketitle
\begin{abstract}
A modular tensor category $\mathcal{C}$ is a non-degenerate ribbon finite tensor category (\cite{Ker}). And a ribbon factorizable Hopf algebra is exactly the Hopf algebra whose finite-dimensional representations form a modular tensor category. The goal of this paper is to construct both semisimple and non-semisimple modular categories with Hopf algebras. In particular, we study central extensions of factorizable Hopf algebras and characterize some conditions for a quotient Hopf algebra to be a factorizable one. Then we give a special way to obtain ribbon elements. As a result, we apply to finite dimensional pointed Hopf algebras $u(\mathcal{D},\lambda,\mu)$, $\mathcal{U}(\mathcal{D}_{red})$ introduced by Andruskiewitsch etc and obtain two families of ribbon factorizable Hopf algebras $H(\omega,n),K(\alpha,n)$. With little restrictions to their parameters, we prove that $\Rep(H(\omega,n)),\Rep(K(\alpha,n))$ are prime modular tensor categories which are not tensor equivalent to representation categories of small quantum groups or the ones in \cite[Example 5.18]{LC}. Lastly, we construct a family of semisimple ribbon factorizable Hopf algebas $A(p,q)$, where $p,q$ are prime numbers satisfying some conditions. And we decompose $\Rep(A(p,q))$ into prime modular tensor categories and prove that $A(p,q)$ can't be obtained by some obvious ways, i.e. it's not tensor product of trivial Hopf algebras (group algebras or their dual) and Drinfeld doubles.
\end{abstract}

\section{Introduction}
A modular tensor category (MTC) is a braided finite tensor category with some additional algebraic structures (duality, twist, and a non-degeneracy axiom, see \cite{Ki, Ker}). It provides a topological quantum field theory in dimension 3, and in particular, invariants of links and 3-manifolds (see \cite{NV, Ker}). Moreover, unitary MTCs has been used in quantum computing (see \cite{Fre, Fre1}). How can MTCs be obtained? If we consider Hopf algebras whose finite-dimensional representations form MTCs, then we will obtain ribbon factorizable Hopf algebras (see for example \cite[Section 2.5]{Shi1}). As a result, we can consider constructing these Hopf algebras to produce MTCs. For example, small quantum groups with specific parameters and quantum doubles of finite groups are ribbon and factorizable Hopf algebras, which can thus producing MTCs. Although these examples support a fluent source of MTCs, some additional questions (see \cite[Section 1.2]{Ro}) motivate us to establish more MTCs. Therefore, we focus on constructing MTCs from Hopf algebras in this paper.

Many authors have in fact considered constructing MTCs by using methods other than Hopf algebras (\cite{Bla,Mug,HY,Ro,HS,Shi,LC}). For example, M. M\"{u}ger established a double commutant theorem about MTCs in \cite{Mug}. This implies that there is a way to construct MTCs by considering the relative M\"{u}ger center of MTCs (see \cite[Corollary 3.5]{Mug}). This method is applicable to the construction of semisimple MTCs. However, what about the construction of non-semisimple MTCs? In another paper \cite{Shi}, K. Shimizu found that the characterizations of non-degeneracy of a braided fusion category can also apply to the non-semisimple case. Using the results obtained, he established a method to construct factorizable braided tensor category by considering Yetter-Drinfeld categories (see \cite[Section 6]{Shi}), and hence providing a new source of factorizable Hopf algebras which are not Drinfel'd doubles in general. In \cite{LC}, R. Laugwitz and C. Walton develop this approach to construct MTCs. Their main idea is to use Shimizu's result (\cite[Theorem 4.9]{Shi}) about double commutant theorem, and obtain a result (\cite[Theorem 4.3]{LC}) about relative M\"{u}ger center as their main way to construct MTCs. Using this theorem and some other techniques, they not only recover the MTCs provided by the representations of small quantum groups (\cite[Example 5.17]{LC}) but also concretely give some new non-semisimple MTCs (such as \cite[Example 5.18]{LC}). And these non-semisimple MTCs are also representations of ribbon factorizable Hopf algebras. At present, representation categories of ribbon factorizable Hopf algebras are still important source of MTCs. Inspired by this fact and the above works (such as \cite[Theorem 4.3]{LC}), we try to continue exploring the construction of MTCs but using Hopf algebras. That is, we want to provide a computable way to construct ribbon and factorizable Hopf algebras, and hence producing MTCs.

Our idea is as follows. We choose a Hopf algebra $H$ that makes $D(H)$ the ribbon Hopf algebra. Then, we consider Hopf ideals of $D(H)$ satisfying the corresponding  quotient Hopf algebras are factorizable Hopf algebras. The difficult part is choosing the required ideals. To address this, we use central extension of Hopf algebras and the theorem about MTCs given in \cite[Theorem 4.3]{LC}. As a result, we establish a general way to construct factorizable Hopf algebras. Then, we give a special way handle ribbon elements by considering central group-like elements and thus provide ribbon factorizable Hopf algebras. 

To construct ribbon factorizable Hopf algebras concretely, we apply our results to the Hopf algebras $u(\mathcal{D},\lambda,\mu)$, $\mathcal{U}(\mathcal{D}_{red})$ introduced by Andruskiewitsch etc. In fact, the authors in \cite{LC} have studied $u(\mathcal{D},0,0)$, $\mathcal{U}(\mathcal{D}_{red})$ . And they have characterized conditions for braided Drinfeld double to be a ribbon factorizable Hopf algebras by using relative M\"{u}ger center. In this paper, we consider not only $u(\mathcal{D},0,0)$ but also their lifting cases. Moreover, our method to obtain ribbon elements are different from \cite{LC}. In \cite{LC}, the author mainly use the result of \cite{BS} to get ribbon elements, while we mainly use the property that there central group-like elements are trivial to obtain ribbon elements. Then we construct two families of ribbon factorizable Hopf algebras $H(\omega,n),K(\alpha,n)$ and proved that $\Rep(H(\omega,n)),\Rep(K(\alpha,n))$ are prime modular tensor categories which are not tensor equivalent to representation categories of small quantum groups or the ones in \cite[Example 5.18]{LC}. The main tool to achieve this is the method of \cite{TM,ME} to discuss quotient Hopf algebras. Since our goal, we also construct a family of semisimple ribbon factorizable Hopf algebras $A(p,q)$. Then we decompose $\Rep(A(p,q))$ into prime modular tensor categories. Although $\Rep(A(p,q))$ is not prime, we show that $A(p,q)$ can't be obtained by some obvious way, that is to say it's not tensor product of trivial Hopf algebras (group algebras or their dual) and Drinfeld doubles.

The rest of this paper is organized as follows. In Section 2, we provide some preliminaries including the definition of modular tensor categories and ribbon factorizable Hopf algebras, the extension of Hopf algebras, the pointed Hopf algebras $u(\mathcal{D},\lambda,\mu)$, $\mathcal{U}(\mathcal{D}_{red})$. Then, a family of semisimple Hopf algebras $\mathscr{A}_l$ are also recalled, and a theorem for $H$ to be a ribbon Hopf algebra is given, a theorem about relative M\"{u}ger center in \cite{LC} is reviewed. Using central extension of Hopf algebras and the theorem \cite[Theorem 4.3]{LC}, we establish Theorems \ref{thm3.rf}-\ref{thm3.4} as main ways to construct ribbon factorizable Hopf algebras in Section 3. In Section 4, we apply the proposed methods to construct two families of pointed ribbon factorizable Hopf algebras $H(\omega,n),K(\alpha,n)$, and describe them by using generator and relations, prove that their representation categories are prime modular tensor categories which are not tensor equivalent to representation categories of small quantum groups or the ones in \cite[Example 5.18]{LC}. Then a family of semisimple ribbon factorizable Hopf algebras $A(p,q)$ are constructed. And we decompose $\Rep(A(p,q))$ into prime modular tensor categories. Although $\Rep(A(p,q))$ is not prime, we prove that $A(p,q)$ can't be obtained by some obvious way, i.e. it's not tensor product of trivial Hopf algebras (group algebras or their dual) and Drinfeld doubles.
\begin{convention}\emph{Throughout this paper we work over an algebraically closed field $\Bbbk$ of characteristic 0. All Hopf algebras in this paper are finite-dimensional. For the symbol $\delta$, we mean the classical Kronecker's symbol. Our references for the theory of Hopf algebras are \cite{R,MS}. For a Hopf algebra $H$, the set of group-like elements in $H$ will be denoted by $G(H)$.}
\end{convention}

\section{Preliminaries}
We collect some necessary notions and results in this section.
\subsection{Modular tensor categories and ribbon factorizable Hopf algebras.}
A finite tensor category $(\mathcal{C}, \otimes, \mathrm{1})$ \cite{ENO} is a rigid monoidal category such that $\mathcal{C}$ is a finite abelian category, the tensor product of $\mathcal{C}$ is $\Bbbk$-linear in each variable, and the unit object of $\mathcal{C}$ is a simple object. If $(\mathcal{C}, \otimes, \mathrm{1})$ admits a braiding $c$, then $(\mathcal{C}, \otimes, \mathrm{1}, c)$ is called braided finite tensor category (\cite{ENO}). In this case, the M\"{u}ger center $\mathcal{C}'$ is the full subcategory on the objects $\Ob(\mathcal{C}')=\{X \in \mathcal{C}|\; c_{Y,X} c_{X,Y}=\Id_{X\otimes Y} \text{ for all }Y\in \mathcal{C}\} $. Denote the tensor category of finite-dimensional vector spaces over $\Bbbk$ as $\mathbf{vect}_{\Bbbk}$.

\begin{definition}\cite[Theorem 1]{Shi}
A braided finite tensor category $(\mathcal{C}, \otimes, \mathrm{1}, c)$ is called
non-degenerate if the M\"{u}ger center $\mathcal{C}'$ is equal to $\mathbf{vect}_{\Bbbk}$.
\end{definition}

To give the definition of modular tensor categories, we need to recall ribbon tensor category as follows. A braided tensor category $(\mathcal{C}, \otimes, \mathrm{1},c)$ is ribbon if it is equipped with a natural isomorphism $\theta_X : X \xrightarrow{\sim} X$ (a twist) satisfying $\theta_{X\otimes Y}=(\theta_X \otimes \theta_Y)\circ c_{Y,X} \circ c_{X,Y}$ and $(\theta_X)^\ast=\theta_{X^\ast} $ for all $X, Y\in \mathcal{C}$. Now we review the following definition.

\begin{definition}\cite[Definition 5.2.7]{Ker}, \cite[Theorem 1]{Shi} \label{def2.3}
A braided finite tensor category is called modular if it is ribbon and non-degenerate.
\end{definition}

Let's introduce the following result which is implied in the proof of \cite[Theorem 4.3]{LC}. To do this, we need more concepts. Recall that a full subcategory of an abelian category is called \emph{topologizing subcategory} \cite{RA, Shi} if it is closed under finite direct sums and subquotients. By a \emph{braided tensor subcategory} of a braided tensor category $(\mathcal{C}, \otimes, \mathrm{1}, c)$ we mean a subcategory of $\mathcal{C}$ containing the unit object of $\mathcal{C}$, closed under the tensor product of $\mathcal{C}$, and containing the braiding isomorphisms. Let $S$ be a subset of objects of a braided category $(\mathcal{C}, \otimes, \mathrm{1},c)$, the M\"{u}ger centralizer $C_\mathcal{C}(S)$ \cite[Definition 2.6]{Mug} of $S$ in $\mathcal{C}$ is defined as the full subcategory of $\mathcal{C}$ with objects
$$\Ob(C_\mathcal{C}(S)) := \{X \in \mathcal{C} | \;c_{Y,X} c_{X,Y}=\Id_{X\otimes Y} \text{ for all } Y\in S\}.$$
The following theorem is implied by the proof of \cite[Theorem 4.3]{LC}.
\begin{theorem}\label{thm2.2}
Let $(\mathcal{D}, \otimes, \mathrm{1}, c)$ be a non-degenerate finite braided tensor category, let $\mathcal{E}$ be a topologizing braided tensor subcategory of $\mathcal{D}$. Then $C_\mathcal{D}(\mathcal{E})$ is non-degenerate braided tensor category if and only if $\mathcal{E}$ is non-degenerate.
\end{theorem}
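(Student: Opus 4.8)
The plan is to deduce the equivalence from Shimizu's double commutant theorem by showing that $C_\mathcal{D}(\mathcal{E})$ and $\mathcal{E}$ have \emph{the same} M\"{u}ger center, so that one is non-degenerate precisely when the other is.

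The first step is a purely formal observation about how the M\"{u}ger center and the M\"{u}ger centralizer interact. For any braided tensor subcategory $\mathcal{F}$ of $\mathcal{D}$, an object $X\in\mathcal{F}$ lies in the M\"{u}ger center $\mathcal{F}'$ exactly when $c_{Y,X}c_{X,Y}=\Id_{X\otimes Y}$ for every $Y\in\mathcal{F}$, which is literally the condition defining $C_\mathcal{D}(\mathcal{F})$. Hence $\mathcal{F}'=\mathcal{F}\cap C_\mathcal{D}(\mathcal{F})$ as full subcategories. Applying this to $\mathcal{F}=\mathcal{E}$ gives $\mathcal{E}'=\mathcal{E}\cap C_\mathcal{D}(\mathcal{E})$, and applying it to $\mathcal{F}=C_\mathcal{D}(\mathcal{E})$ gives $\big(C_\mathcal{D}(\mathcal{E})\big)'=C_\mathcal{D}(\mathcal{E})\cap C_\mathcal{D}\big(C_\mathcal{D}(\mathcal{E})\big)$.

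The second step is to feed in the double commutant theorem of Shimizu (\cite[Theorem 4.9]{Shi}): since $\mathcal{D}$ is non-degenerate and $\mathcal{E}$ is a topologizing braided tensor subcategory, one has $C_\mathcal{D}\big(C_\mathcal{D}(\mathcal{E})\big)=\mathcal{E}$. Substituting this into the identity from the first step yields
$$\big(C_\mathcal{D}(\mathcal{E})\big)'=C_\mathcal{D}(\mathcal{E})\cap C_\mathcal{D}\big(C_\mathcal{D}(\mathcal{E})\big)=C_\mathcal{D}(\mathcal{E})\cap\mathcal{E}=\mathcal{E}'.$$
Thus the two categories share a common M\"{u}ger center, and by the definition of non-degeneracy, $C_\mathcal{D}(\mathcal{E})$ is non-degenerate iff $\big(C_\mathcal{D}(\mathcal{E})\big)'=\mathbf{vect}_{\Bbbk}$ iff $\mathcal{E}'=\mathbf{vect}_{\Bbbk}$ iff $\mathcal{E}$ is non-degenerate, which is the desired equivalence.

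The step I expect to require the most care is not the algebra above but checking that everything in sight is a legitimate object of the theory: one must verify that $C_\mathcal{D}(\mathcal{E})$ is itself a topologizing braided tensor subcategory of $\mathcal{D}$ (closed under finite direct sums, subquotients and tensor products, and containing $\mathrm{1}$ together with the braidings). Only then does forming its M\"{u}ger center make sense, does the non-degeneracy criterion apply to it, and does Shimizu's theorem legitimately compute $C_\mathcal{D}\big(C_\mathcal{D}(\mathcal{E})\big)$. These closure properties follow from naturality of $c$ and the fact that the double-braiding condition $c_{Y,X}c_{X,Y}=\Id$ is inherited by subobjects, quotients and tensor products, but they should be recorded explicitly before invoking the double commutant theorem.
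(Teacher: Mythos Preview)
Your argument is correct and is essentially the approach the paper defers to: the paper does not give its own proof of this statement but records it as ``implied by the proof of \cite[Theorem 4.3]{LC}'', and as noted in the introduction that proof rests precisely on Shimizu's double commutant theorem \cite[Theorem 4.9]{Shi}, which is exactly the tool you invoke to obtain $C_\mathcal{D}(C_\mathcal{D}(\mathcal{E}))=\mathcal{E}$ and hence $(C_\mathcal{D}(\mathcal{E}))'=\mathcal{E}'$. Your closing remark about verifying that $C_\mathcal{D}(\mathcal{E})$ is again a topologizing braided tensor subcategory is well placed and is the only point where any real checking is needed.
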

Now we recall general prime modular tensor category. 
\begin{definition}\cite{Mug,LC}
A modular tensor category $\mathcal{C}$ is \emph{prime} if every topologizing non-degenerate braided tensor subcategory is equivalent to either $\mathcal{C}$ or $\mathbf{vect}_{\Bbbk}$.
\end{definition}
A fact that every modular tensor category is equivalent to a finite Deligne tensor product of prime modular categories is shown in \cite[Corollary 4.20]{LC}. Therefore, construction of modular tensor categories reduces to prime ones.

To produce modular tensor categories (MTCs) with Hopf algebras, we need definition of ribbon factorizable Hopf algebras. Recall that a quasitriangular Hopf algebra is a pair $(H, R)$ where $H$ is a Hopf algebra over $\Bbbk$ and $R=\sum R^{(1)} \otimes R^{(2)}$ is an invertible element in $H\otimes H$ such that
\begin{equation*}
 (\Delta \otimes \id)(R)=R_{13}R_{23},\; (\id \otimes \Delta)(R)=R_{13}R_{12},\;\Delta^{op}(h)R=R\Delta(h),
 \end{equation*}
for $h\in H$. Here by definition $R_{12}= \sum R^{(1)} \otimes R^{(2)}\otimes 1 $ and similarly for $R_{13}$ and $R_{23}$. Next, we recall ribbon structure on a quasitriangular Hopf algebra $(H,R)$. Let $u=\sum_i S(R^i)R_i$ and let $c=uS(u)$. If there is a center element $v\in H$ satisfying the following conditions:
\begin{itemize}
  \item[(R.1)] $v^2=c$;
  \item[(R.2)]  $S(v)=v$;
  \item[(R.3)] $\epsilon(v)=1$;
  \item[(R.4)] $\Delta(v)=(v\otimes v)(R_{21}R)^{-1}$;
\end{itemize}
then $v$ is called a ribbon element of $(H,R)$ and the triple $(H,R,v)$ is said to be a ribbon Hopf algebra. Let $\text{R}(H,R)$ denote the set of ribbon elements of $(H, R)$. Let $\mathrm{g}=uS(u^{-1})$. It's known that $S^4(h)=\mathrm{g}h\mathrm{g}^{-1}$ for $h\in H$ (see \cite[Proposition 12.3.2]{R} for example), and we will use this property frequently in next section. Define the set $G(\text{R}(H,R))$ as follows:
$$G(\text{R}(H,R))=\{l\in G(H)|\;l^2=\mathrm{g}^{-1}\text{ and } S^2(h)=l^{-1}hl \text{ for  all } h\in H\}.$$

The following theorem establishes a one-to-one correspondence between $G(\text{R}(H,R))$ and $\text{R}(H,R)$.

\begin{theorem}\cite[Theorem 1]{Kau}\label{thm2.r}
The map $G(\emph{R}(H,R))\rightarrow \emph{R}(H,R)$ given by $l\mapsto ul$ is a set of bijection.
\end{theorem}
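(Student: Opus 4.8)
The plan is to establish the bijection by showing three things in turn: that $l\mapsto ul$ actually lands in $\text{R}(H,R)$, that it is injective (immediate, since $u$ is invertible), and that it is surjective by producing the inverse map $v\mapsto u^{-1}v$. Throughout I would lean on the standard properties of the Drinfel'd element $u=\sum_i S(R^i)R_i$ recorded in \cite{R}: $u$ is invertible with $\epsilon(u)=1$, one has $S^2(h)=uhu^{-1}$ for all $h\in H$, $\Delta(u)=(R_{21}R)^{-1}(u\otimes u)$, and $c=uS(u)=S(u)u$ is central, so that $u$ and $S(u)$ commute. Two easy consequences I would extract first are $S^2(u)=uuu^{-1}=u$, and, since $\mathrm{g}=uS(u)^{-1}$, the identity $\mathrm{g}^{-1}=S(u)u^{-1}=u^{-1}S(u)$.

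For well-definedness, fix $l\in G(\text{R}(H,R))$ and set $v=ul$. Applying the hypothesis $S^2(h)=l^{-1}hl$ to $h=u$ and comparing with $S^2(u)=u$ gives $l^{-1}ul=u$, so $u$ and $l$ commute; comparing $uhu^{-1}=S^2(h)=l^{-1}hl$ for all $h$ then shows that $ul=v$ is central. From here the four axioms are short computations: (R.3) follows from $\epsilon(u)=\epsilon(l)=1$; for (R.1) one gets $v^2=u^2l^2=u^2\mathrm{g}^{-1}=u^2S(u)u^{-1}=ucu^{-1}=c$, using that $u$ commutes with both $l$ and $S(u)$ and that $c$ is central; for (R.2) one computes $S(v)=S(l)S(u)=l^{-1}S(u)=l^{-1}\mathrm{g}^{-1}u=l^{-1}l^2u=ul=v$ via $l^2=\mathrm{g}^{-1}$; and for (R.4) one has $\Delta(v)=\Delta(u)\Delta(l)=(R_{21}R)^{-1}(u\otimes u)(l\otimes l)=(R_{21}R)^{-1}(v\otimes v)=(v\otimes v)(R_{21}R)^{-1}$, the final equality holding because $v\otimes v$ is central in $H\otimes H$. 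Thus $ul\in\text{R}(H,R)$.

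Injectivity is clear from invertibility of $u$. For surjectivity I would take an arbitrary $v\in\text{R}(H,R)$ and set $l=u^{-1}v$, which makes sense since $v^2=c$ is invertible. Centrality of $v$ drives every remaining check: $l^{-1}hl=v^{-1}(uhu^{-1})v=v^{-1}S^2(h)v=S^2(h)$; $l^2=u^{-2}v^2=u^{-2}c=u^{-1}S(u)=\mathrm{g}^{-1}$; $\epsilon(l)=1$; and, using $\Delta(u)^{-1}=(u^{-1}\otimes u^{-1})(R_{21}R)$ together with (R.4) for $v$, $\Delta(l)=\Delta(u)^{-1}\Delta(v)=(u^{-1}\otimes u^{-1})(R_{21}R)(v\otimes v)(R_{21}R)^{-1}=(u^{-1}\otimes u^{-1})(v\otimes v)=l\otimes l$, again because $v\otimes v$ is central. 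Hence $l\in G(\text{R}(H,R))$ with $ul=v$.

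The individual verifications are routine once the properties of $u$ are in place, so I do not expect a genuine obstacle. The one point deserving care — and the real mechanism of the proof — is the systematic exploitation of centrality of the ribbon element: it is centrality of $v\otimes v$ that lets me slide $(R_{21}R)^{\pm1}$ past it in the coproduct computations, and it is the identity $S^2(u)=u$ that forces $u$ and $l$ to commute and thereby reconciles the grouplike relation $l^2=\mathrm{g}^{-1}$ with the square condition $v^2=c$. Keeping these two facts in clear view is what makes the forward and backward directions mirror each other exactly.
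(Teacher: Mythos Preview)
The paper does not actually prove this statement: Theorem~\ref{thm2.r} is quoted from \cite[Theorem~1]{Kau} and used as a black box throughout Section~\ref{sec2.2}. So there is no ``paper's own proof'' to compare against.

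That said, your argument is correct and is essentially the standard one (the one given in the Kauffman--Radford reference). The forward direction is clean; the only place worth a second glance is the surjectivity step, where you invert $\Delta(u)=(R_{21}R)^{-1}(u\otimes u)$ to get $\Delta(u)^{-1}=(u^{-1}\otimes u^{-1})(R_{21}R)$. This is fine because $u\otimes u$ commutes with $R_{21}R$ (a standard identity, e.g.\ \cite[Chapter~12]{R}), but you might state that explicitly rather than leaving it implicit. Everything else---the use of $S^2(u)=u$ to force $ul=lu$, the reduction of (R.1) and (R.2) to the identity $\mathrm{g}^{-1}=u^{-1}S(u)$, and the centrality of $v\otimes v$ to collapse the coproduct computations---is exactly the mechanism of the original proof.
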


For a quasitriangular Hopf algebra $(H,R)$, there are linear maps $f_{R_{21}R}: H^{\ast cop}\rightarrow H$ and $g_{R_{21}R}: H^{\ast op}\rightarrow H$, given respectively by
\begin{equation}\label{eq2.5}
f_{R_{21}R}(a):=(a\otimes \Id)(R_{21}R),\;\;g_{R_{21}R}(a):=(\Id\otimes a)(R_{21}R),\;f\in H^\ast.
\end{equation}
A factorizable Hopf algebra is a quasitriangular Hopf algebra $(H, R)$ such that $f_{R_{21}R}$, or equivalently $g_{R_{21}R}$, is a linear isomorphism. If a quasitriangular Hopf algebra is ribbon and factorizable, then we call it a \emph{ribbon factorizable} Hopf algebra. Let $(H,R)$ be a quasitriangular Hopf algebra and let $(\Rep(H), \otimes, \Bbbk, c)$ be the finite braided tensor category of finite-dimensional representations of $H$, where $c_{X,Y}$ is given by $\tau\circ R$ for $X,Y\in \Rep(H)$, where $\tau$ is the flip map. It's known that $\Rep(H)$ is non-degenerate braided finite tensor category if and only if $(H, R)$ is a factorizable Hopf algebra. Moreover, it is modular if and only if $(H, R)$ is a ribbon factorizable Hopf algebra (see \cite[Section 2.5]{Shi1} for example).

A well known class of factorizable Hopf algebras are Drinfel'd doubles and we recall them as follows. Let $H$ be a Hopf algebra over $\Bbbk$. The Drinfel'd double of $H$ denoted by $D(H)$ is given as follows. As a coalgebra, $D(H)=(H^*)^{cop}\otimes H$. The multiplication of $D(H)$ is given by $(f\otimes h)(g\otimes k)=f [h_{(1)}\rightharpoonup g\leftharpoonup S^{-1}(h_{(3)})]\otimes h_{(2)}k$, where $f,g\in H^*, \;h,k\in H$ and $\langle a\rightharpoonup g\leftharpoonup b,c\rangle:=\langle g,bca\rangle$ for $a,b,c\in H$. For convenience, we write $fh$ as $f\otimes h$ in the following content. Let $\mathcal{R}$ be the standard universal $\mathcal{R}$-matrix of $D(H)$, i.e. $\mathcal{R}=\sum_{i=1}^n h_i\otimes h^i$, where $n=\
\dim(H)$ and $\{h_i\}_{i=1}^n$, $\{h^i\}_{i=1}^n$ are dual basis of $H$.

\subsection{Hopf exact sequence.}
\begin{definition}\label{def2.1.1}
A short exact sequence of Hopf algebras is a sequence of Hopf algebras
and Hopf algebra maps
\begin{equation}\label{ext}
\;\; K\xrightarrow{\iota} H \xrightarrow{\pi} \overline{H}
\end{equation}
such that
\begin{itemize}
  \item[(i)] $\iota$ is injective,
  \item[(ii)]  $\pi$ is surjective,
  \item[(iii)] $\ker(\pi)= HK^+$, $K^+$ is the kernel of the counit of $K$,
\end{itemize}
\end{definition}
here we view $K$ as a sub-Hopf algebra of $H$ through the map $\iota$. Take an exact sequence \eqref{ext}, then $K$ is a normal Hopf
subalgebra of $H$. Conversely, if $K$ is a normal Hopf subalgebra of a Hopf algebra
$H$, then the quotient coalgebra $H=H/HK^+=H/K^+H$ is a quotient Hopf algebra
and $H$ fits into an extension \eqref{ext}, where $\iota$ and $\pi$ are the canonical maps. If $H$ fits into an extension \eqref{ext} and $H$ is finite-dimensional, then $\dim(H)=\dim(K)\dim(\overline{H})$ (see \cite{Sch}).

An extension \eqref{ext} above is called \emph{abelian} if $K$ is commutative and $\overline{H}$ is cocommutative. Abelian extensions were classified by Masuoka
(see \cite[Proposition 1.5]{M3}), which can be expressed as $\Bbbk^G\#_{\sigma,\tau}\Bbbk F$, where $G,F$ are finite groups.
To give the description of $\Bbbk^G\#_{\sigma,\tau}\Bbbk F$, we need the following data
\begin{itemize}
\item[(i)] A matched pair of groups, i.e. a quadruple $(F,G,\triangleleft,\triangleright)$, where $G\stackrel{\triangleleft}{\leftarrow}G\times F \stackrel{\triangleright }{\rightarrow}F$ are action of groups on sets, satisfying the following conditions
    \begin{align*}
    g\triangleright(ff')=(g\triangleright f)((g\triangleleft f)\triangleright f'),\quad (gg')\triangleleft f=(g\triangleleft(g'\triangleright f))(g'\triangleleft f),
    \end{align*}
    for $g,g'\in G$ and $f,f'\in F$.
\item[(ii)] $\sigma:G\times F\times F\rightarrow \Bbbk^\times$ is a map such that
\begin{align*}
    \sigma(g\triangleleft f,f',f'')\sigma(g,f,f'f'')=\sigma(g ,f,f')\sigma(g,ff',f'')
    \end{align*}
    and $\sigma(1,f,f')=\sigma(g,1,f')=\sigma(g,f,1)=1$, for $g\in G$ and $f,f',f''\in F$.
\item[(iii)] $\tau:G\times G \times F \rightarrow \Bbbk^\times$ is a map satisfying
    \begin{align*}
    \tau(gg',g'',f)\tau(g,g',g''\triangleright f)=\tau(g',g'',f)\tau(g,g'g'',f)
    \end{align*}
    and $\tau(g,g',1)=\tau(g,1,f)=\tau(1,g',f)$, for $g,g',g''\in G$ and $f\in F$. Moreover, the $\sigma, \tau$ satisfy the following compatible condition
    \begin{align*}
    \sigma(gg',f,f')\tau(g,g',ff')&=\sigma(g,g'\triangleright f, (g'\triangleleft f)\triangleright f')\sigma(g',f,f')\\
    &\tau(g,g',f)\tau(g\triangleleft (g'\triangleright f),g'\triangleleft f,f'),
    \end{align*}
    for $g,g',g''\in G$ and $f,f',f''\in F$.
\end{itemize}

\begin{definition}\cite[Section 2.2]{AA}\label{def2.1.2}
The Hopf algebra $\Bbbk^G\#_{\sigma,\tau}\Bbbk F$ is equal to $\Bbbk^G\otimes \Bbbk F$ as vector space and we write $a\otimes x$ as $a\#x$. The product, coproduct are given by
\begin{align*}
 &(e_g\#f).(e_{g'}\#f')=\delta_{g\triangleleft f,g'}\;\sigma(g,f,f')\;e_g\#(ff'),\\
 &\Delta(e_g\#f)=\sum_{g'g''=g}\;\tau(g',g'',f)\;e_{g'}\#g''\triangleright f\otimes e_{g''}\#f,
\end{align*}
The unit is $\sum_{g\in G}e_g\#1$ and the counit is $\epsilon(e_g\#f)=\delta_{g,1}$ and the antipode is
\begin{align*}
 S(e_g\#f)=\sigma(g^{-1},g\triangleright f,(g\triangleright f)^{-1})^{-1}\;
 \tau(g^{-1},g,f)^{-1}\;e_{(g\triangleleft f)^{-1}}\#(g\triangleright f)^{-1}.
\end{align*}
\end{definition}
Using above definition, we review Hopf algebras $\mathscr{A}_l$ which will be used to construct semisimple ribbon factorizable Hopf algebras.
\begin{example}\label{ex2.1.5}
\emph{Let $p, q$ be two odd prime numbers such that $p\equiv 1(\text{mod}\;q)$ and let $\omega$ be a primitive $q$th root of 1 in $\Bbbk$. Assume $t\in \mathbb{N}$ satisfying $t^q\equiv 1(\text{mod}\;p)$ and $t \not \equiv 1(\text{mod}\;p)$. Let $0\leq l\leq (q-1)$, then the Hopf algebra $\mathscr{A}_l$ \cite[Lemma 1.3.9]{Na2} belongs to $\Bbbk^G\#_{\sigma,\tau}\Bbbk F$. By definition, the data $(G,F,\triangleleft,\triangleright,\sigma,\tau)$ of $\mathscr{A}_l$ is given by the following way
\begin{itemize}
             \item[(i)] $G=\mathbb{Z}_p\rtimes \mathbb{Z}_q=\langle a,b|\;a^p=b^q=1,bab^{-1}=a^t\rangle$,\;$F=\mathbb{Z}_q=\langle x|\;x^q=1\rangle$. The action $\triangleright $ is trivial, and $a\triangleleft x^{i}=a^{t^i},b\triangleleft x^i=b$, for $0\leq i \leq q-1$.
              \item[(ii)] $\sigma(a^i b^j,x^m, x^n)=w^{jlq_{mn}}$, where $q_{mn}$ is the quotient of $m+n$ in the division by $q$ and $1 \leq i\leq p-1$, $0\leq j,m,n \leq q-1$.
              \item[(iii)] $\tau(g ,g',f)=1$ for $g,g'\in G$ and $f\in F$.
\end{itemize}}
\end{example}

\subsection{The pointed Hopf algebras $u(\mathcal{D},\lambda,\mu)$.}
Assume $H$ is a finite-dimensional pointed Hopf algebra and $G(H)$ is an abelian group. In \cite{AH}, N. Andruskiewitsch and H.J. Schneider have classified all these Hopf algebras such that all prime divisors of $|G(H)|$ are greater than 7. As a result, such a Hopf algebra is isomorphic to the Hopf algebra $u(\mathcal{D},\lambda,\mu)$ which we will recall, where $\mathcal{D},\lambda,\mu$ are certain parameters. Fix a finite abelian group $G$. Denote the dual group of $G$ as $\widehat{G}$. A datum $\mathcal{D}$ of finite Cartan type for $G$,
$$\mathcal{D}=\mathcal{D}(G, (g_i)_{1\leq i\leq \theta}, (\chi_i)_{1\leq i\leq \theta}, (a_{ij})_{1\leq i,j\leq \theta}),$$
consists of elements $g_i\in G$, $\chi_i\in \widehat{G}$ for $1\leq i \leq \theta$, and a Cartan matrix $(a_{ij})_{1\leq i,j\leq \theta}$ of finite type satisfying
\begin{equation}\label{eq2.c}
q_{ij}q_{ji}=q_{ii}^{a_{ij}},\;q_{ii}\neq 1, \;\text{with }q_{ij}=\chi_j(g_i),\;1\leq i,j \leq \theta.
\end{equation}

Let $\Phi$ be the root system of the Cartan matrix $(a_{ij})_{1\leq i,j\leq \theta}$, $\alpha_1,...,\alpha_\theta$ a system of simple roots, and $T$ the set of connected components of the Dynkin diagram of $\Phi$. Let $\Phi_J$ be the root system of the component $J$, where $J\in T$. We write $i\sim j$ if $\alpha_i$ and $\alpha_j$ are in the same connected component of the Dynkin diagram of $\Phi$.

For a positive root $\alpha=\sum_{i=1}^\theta n_i \alpha_i$, where $n_i\in \mathbb{N}$ for $1\leq i \leq \theta$, define
$$g_\alpha=\prod_{i=1}^\theta g_i^{n_i},\;\chi_\alpha=\prod_{i=1}^\theta \chi_i^{n_i}.$$
We assume that the order of $q_{ii}$ is odd for $1\leq i \leq \theta$, and that the order of $q_{ii}$ is prime to $3$ for all $i$ in a connected component of type $G_2$. By the equations \eqref{eq2.c}, the order $N_i$ of $q_{ii}$ is constant in each connected component $J$, and we define $N_J =N_i$ for all $i\in J$.

\emph{The parameter} $\lambda$. Let $\lambda=(\lambda_{ij})_{1\leq i ,j\leq \theta}$ be a family of elements in $\Bbbk$ satisfying the following condition for all $1\leq i<j \leq \theta$: if $g_ig_j=1$ or $\chi_i\chi_j\neq 1$, then $\lambda_{ij}=0$. Moreover, $\lambda_{ji}=-q_{ji}\lambda_{ij}$.

\emph{The parameter} $\mu$. Let $\mu=(\mu_\alpha)_{\alpha\in \Phi}$ be a family of elements in $\Bbbk$ such that for all $\alpha\in \Phi_J$, $J\in T$, if $g_\alpha=1$ or $\chi_\alpha^{N_J}\neq1$, then $\mu_\alpha=0$.

\emph{The Hopf algebra} $u(\mathcal{D},\lambda,\mu)$. Given a parameter $\mu$ for a datum $\mathcal{D}$. Let $\alpha\in \Phi$. In \cite{AH}, the authors associate to $\mu$ and $\alpha$ an element $u_\alpha(\mu)$ in the group algebra $\Bbbk G$ (see \cite[Section 2.2]{AH} for details). By their construction, $u_\alpha(\mu)$ belongs to the augmentation ideal of $\Bbbk[g_i^{N_i}|\;1\leq i\leq \theta]$, and $u_{\alpha_i}(\mu)=u_{\alpha_i}(1-g_i^{N_i})$ for all simple roots $\alpha_i$, $1\leq i\leq \theta$.

The Hopf algebra $u(\mathcal{D},\lambda,\mu)$ is generated as an algebra by the group $G$, that is, by generators of $G$ satisfying the relations of the group, and $x_1,...,x_\theta$, with the relations:
\begin{align}
\label{eq2.1.x} gx_ig^{-1}=\chi_i(g)x_i,\; for\;  1\leq i\leq \theta,\; g\in G;\\
\label{eq2.2.x} \text{ad}_{c} (x_i)^{1-a_{ij}}(x_j)=0,\; for\;  i\neq j,i\sim j;\\
\label{eq2.3.x} \text{ad}_{c} (x_i)(x_j)=\lambda_{ij}(1-g_ig_j),\; for\;  i< j,i\not \sim j;\\
\label{eq2.4.x} x_\alpha^{N_J}=u_\alpha(\mu),\; for\;  \alpha\in \Phi_J^{+},J\in T;
\end{align}
where $\text{ad}_{c} (x_i)$ is the braided adjoint action of $x_i$, and the $x_\alpha$ is the root vector associated to $\alpha$ (see \cite[Sections 1.2, 2.1]{AH} for details). The coalgebra structure is given by
\begin{align*}
\Delta(x_i)=g_i\otimes x_i+x_i\otimes 1,\;\Delta(g)=g\otimes g,\; for\;  1\leq i\leq \theta,\; g\in G.
\end{align*}
Suppose $\Phi^+=\{\beta_1,...,\beta_p\}$ is a convex ordering. Then the following result is shown in \cite[Theorem 4.5]{AH}.

\begin{theorem}\label{thm2.3}
The set $\{x_{\beta_1}^{i_1}...x_{\beta_p}^{i_p}g|\;0\leq i_k\leq N_k,\;1\leq k\leq p,\;g\in G\}$ is a linear basis of $u(\mathcal{D},\lambda,\mu)$. In particular,
$$\dim(u(\mathcal{D},\lambda,\mu))=\prod_{J\in T}N_J^{|\Phi_J^+|}|G|.$$
\end{theorem}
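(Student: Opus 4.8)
The plan is to reduce the statement to its associated graded Hopf algebra and then invoke the known PBW structure of Nichols algebras of finite Cartan type. First I would equip $u(\mathcal{D},\lambda,\mu)$ with the algebra filtration in which every group element has degree $0$ and each generator $x_i$ has degree $1$ (this is compatible with the coradical filtration of the Hopf algebra). Viewed through this filtration, the defining relations \eqref{eq2.1.x}, \eqref{eq2.2.x}, \eqref{eq2.3.x}, \eqref{eq2.4.x} are \emph{filtered} deformations of homogeneous ones: the linking relation \eqref{eq2.3.x} has leading term $\text{ad}_c(x_i)(x_j)$ and a lower-degree correction $\lambda_{ij}(1-g_ig_j)\in\Bbbk G$, while the root-vector relation \eqref{eq2.4.x} has leading term $x_\alpha^{N_J}$ and correction $u_\alpha(\mu)$, which by hypothesis lies in the augmentation ideal of a group subalgebra and hence in strictly lower filtration degree. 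Consequently $\gr u(\mathcal{D},\lambda,\mu)$ is a quotient of $u(\mathcal{D},0,0)$, the bosonization $\mathfrak{B}(V)\#\Bbbk G$ of the Nichols algebra of the braided vector space $V$ with basis $x_1,\dots,x_\theta$ and braiding determined by $(q_{ij})$.

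For the \textbf{spanning} assertion I would show that every monomial in the $x_i$ and in $G$ straightens into the ordered form $x_{\beta_1}^{i_1}\cdots x_{\beta_p}^{i_p}g$ with $0\le i_k\le N_k-1$. Relation \eqref{eq2.1.x} lets one push each group element to the right past the root vectors, rescaling only by a scalar $\chi_i(g)$; the quantum Serre relations \eqref{eq2.2.x} together with the linking relations \eqref{eq2.3.x} let one reorder products of root vectors into the fixed convex order, since in such an ordering a braided commutator $[x_\alpha,x_\beta]_c$ with $\alpha<\beta$ rewrites (by a Levendorski\u{\i}--Soibel'man type straightening) as a combination of ordered monomials in the intermediate root vectors plus lower-order terms of $\Bbbk G$; and relation \eqref{eq2.4.x} reduces any exponent $\ge N_J$ to a lower power times an element of $\Bbbk G$. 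Each reduction strictly lowers a suitable ordering on monomials, so the process terminates, the proposed set spans, and in particular $\dim u(\mathcal{D},\lambda,\mu)\le \prod_{J\in T}N_J^{|\Phi_J^+|}|G|$.

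For \textbf{linear independence} I would pass to the graded side. The structure theory of Nichols algebras of finite Cartan type—essentially Lusztig's PBW theorem for the Frobenius--Lusztig kernels—gives that $\mathfrak{B}(V)$ has the restricted PBW basis $\{x_{\beta_1}^{i_1}\cdots x_{\beta_p}^{i_p}:0\le i_k< N_k\}$, whence $\dim\big(\mathfrak{B}(V)\#\Bbbk G\big)=\prod_{J\in T}N_J^{|\Phi_J^+|}|G|$. Since $\gr u(\mathcal{D},\lambda,\mu)$ is a quotient of this algebra, we get $\dim u(\mathcal{D},\lambda,\mu)=\dim\gr u(\mathcal{D},\lambda,\mu)\le\prod_{J}N_J^{|\Phi_J^+|}|G|$, matching the spanning bound. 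To upgrade this to equality—and hence to linear independence of the PBW monomials—one must prove the filtered deformation is \emph{flat}, i.e. that the correction terms $\lambda_{ij}(1-g_ig_j)$ and $u_\alpha(\mu)$ do not create any relation beyond the homogeneous ones, so that $\gr u(\mathcal{D},\lambda,\mu)$ is exactly $\mathfrak{B}(V)\#\Bbbk G$ rather than a proper quotient.

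This flatness (non-collapse) step is the \textbf{main obstacle}. I expect the constraints imposed on the parameters—namely $\lambda_{ij}=0$ whenever $g_ig_j=1$ or $\chi_i\chi_j\neq1$, the antisymmetry $\lambda_{ji}=-q_{ji}\lambda_{ij}$, the vanishing conditions on $\mu_\alpha$, and the fact that $u_\alpha(\mu)$ sits in the augmentation ideal of $\Bbbk[g_i^{N_i}]$ with $u_{\alpha_i}(\mu)=u_{\alpha_i}(1-g_i^{N_i})$—to be precisely what guarantees that the ideal generated by \eqref{eq2.1.x}--\eqref{eq2.4.x} is a Hopf ideal whose associated graded ideal is generated by the homogeneous leading terms. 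I would establish this by checking directly that these leading terms are the defining relations of $\mathfrak{B}(V)\#\Bbbk G$ and that the corrections are consistent with the coproduct, so no dimension is lost; once the two dimension estimates coincide, the spanning set is forced to be a basis and the dimension formula follows.
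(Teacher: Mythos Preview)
The paper does not itself prove this statement; it is recorded as \cite[Theorem 4.5]{AH}, so there is no in-paper argument to compare your proposal against beyond that citation.

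On the proposal itself: the spanning half via filtered straightening is standard and fine. The linear-independence half has a real gap. Both dimension estimates you derive point the \emph{same} way---spanning gives $\dim u(\mathcal{D},\lambda,\mu)\le N$ with $N=\prod_J N_J^{|\Phi_J^+|}\,|G|$, and the surjection $\mathfrak{B}(V)\#\Bbbk G\twoheadrightarrow \gr u(\mathcal{D},\lambda,\mu)$ again yields only $\dim\le N$---so nothing you have written produces the reverse inequality $\dim\ge N$. You rightly name flatness as the ``main obstacle'', but your closing paragraph does not discharge it: compatibility of the correction terms with the coproduct guarantees only that the relations generate a Hopf ideal, not that the associated graded ideal equals the ideal generated by the leading terms; Hopf ideals can and do cause collapse. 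In \cite{AH} the missing lower bound is supplied by first proving the unrestricted PBW basis for the infinite-dimensional $U(\mathcal{D},\lambda)$ (this is \cite[Theorem 3.3]{AH}, quoted here as Theorem~\ref{thm2.4})---the non-collapse under the linking deformation being the substantive step---and then controlling the quotient by $X_\alpha^{N_J}-u_\alpha(\mu)$ via the explicit construction of the elements $u_\alpha(\mu)$. Your outline would become a proof if you supplied an input of comparable strength (an explicit module of dimension $N$, a genuine flat-family argument over the parameter space, or a freeness statement over a central subalgebra), but as written the independence direction is asserted rather than proved.
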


Next, we review the Hopf algebra $U(\mathcal{D},\lambda)$ given in \cite[Definition 3.2]{AH}. The Hopf algebra $U(\mathcal{D},\lambda)$ is generated as an algebra by the group $G$ and $\{X_1,...,X_\theta\}$ subject to the relations
\begin{align*}
gX_ig^{-1}=\chi_i(g)X_i,\; for\;  1\leq i\leq \theta,\; g\in G;\\
\text{ad}_{c} (X_i)^{1-a_{ij}}(X_j)=0,\; for\;  i\neq j,i\sim j;\\
\text{ad}_{c} (X_i)(X_j)=\lambda_{ij}(1-g_ig_j),\; for\;  i< j,i\not \sim j;
\end{align*}
The coalgebra structure of $U(\mathcal{D},\lambda)$ is given by
\begin{align*}
\Delta(X_i)=g_i\otimes X_i+X_i\otimes 1,\;\Delta(g)=g\otimes g,\; for\;  1\leq i\leq \theta,\; g\in G.
\end{align*}
Then $u(\mathcal{D},\lambda,\mu)=U(\mathcal{D},\lambda)/\langle X_\alpha^{N_J}-u_\alpha(\mu)|\;\alpha\in \Phi_J^+,J\in T\rangle$ as Hopf algebras (\cite{AH}). The following result is shown in \cite[Theorem 3.3]{AH}.
\begin{theorem}\label{thm2.4}
The set $\{X_{\beta_1}^{i_1}...X_{\beta_p}^{i_p}g|\;i_k\geq 0,\;1\leq k\leq p,\;g\in G\}$ is a linear basis of $U(\mathcal{D},\lambda)$.
\end{theorem}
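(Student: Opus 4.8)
The plan is to prove the two halves of this PBW-type statement separately: first that the ordered monomials $X_{\beta_1}^{i_1}\cdots X_{\beta_p}^{i_p}g$ span $U(\mathcal{D},\lambda)$, and then that they are linearly independent. Since $U(\mathcal{D},\lambda)$ is generated by $G$ and $X_1,\dots,X_\theta$, any element is a linear combination of words in these generators, so both halves amount to controlling how an arbitrary word is rewritten. The relation $gX_ig^{-1}=\chi_i(g)X_i$ lets me move every group element past every $X_i$ at the cost of a scalar $\chi_i(g)$, so I can push all factors from $G$ to the right; it then remains to order the $X$-part according to the convex order $\beta_1,\dots,\beta_p$.

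For the spanning half, I would first record Levendorskii--Soibelman--type straightening relations for the root vectors: for $i<j$ one has
\begin{equation*}
X_{\beta_j}X_{\beta_i}=q_{\beta_i\beta_j}\,X_{\beta_i}X_{\beta_j}+\sum_{\mathbf{k}} c_{\mathbf{k}}\,X_{\beta_{i+1}}^{k_{i+1}}\cdots X_{\beta_{j-1}}^{k_{j-1}}+(\text{lower linking terms}),
\end{equation*}
where the interior monomials are supported strictly between $\beta_i$ and $\beta_j$ in the convex order and the lower linking terms lie in the span of $G$ and shorter products (these are exactly what the parameters $\lambda_{ij}$ contribute through \eqref{eq2.3.x}). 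Such relations follow from the defining relations together with the convexity of the chosen order, essentially as in the quantum-group case. With the straightening relations in hand, an induction --- on the $X$-degree, and within a fixed degree on the number of inversions relative to the convex order --- rewrites any word as a combination of ordered monomials times a group element, proving that the proposed set spans.

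The linear independence is the main obstacle. Here I would pass to the associated graded algebra for the filtration in which each $g\in G$ has degree $0$ and each $X_i$ has degree $1$. The group relation and the quantum Serre relations are homogeneous for this filtration, whereas in the linking relation $\text{ad}_{c}(X_i)(X_j)=\lambda_{ij}(1-g_ig_j)$ the left-hand side has degree $2$ and the right-hand side degree $0$, so its leading term is $\text{ad}_{c}(X_i)(X_j)=0$. Consequently there is a surjection of algebras $U(\mathcal{D},0)\to \gr U(\mathcal{D},\lambda)$. For $\lambda=0$ the algebra $U(\mathcal{D},0)$ is the bosonization $\mathcal{U}(V)\#\Bbbk G$ of the pre-Nichols algebra of the braided vector space $V$ of Cartan type cut out by the quantum Serre relations, and Kharchenko's PBW theorem for Nichols algebras of diagonal type supplies precisely the basis $\{X_{\beta_1}^{i_1}\cdots X_{\beta_p}^{i_p}g\}$ with unbounded exponents, the root vectors being the hyperletters attached to the positive roots. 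Since the ordered monomials already span $U(\mathcal{D},\lambda)$, their symbols span $\gr U(\mathcal{D},\lambda)$, so it suffices to show that this surjection is an isomorphism: by the standard fact that lifts of a basis of $\gr$ form a basis of the filtered algebra, independence in $\gr U(\mathcal{D},\lambda)$ then descends to $U(\mathcal{D},\lambda)$.

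The crux is thus the injectivity of $U(\mathcal{D},0)\to\gr U(\mathcal{D},\lambda)$, i.e. that deforming the linking parameters collapses no PBW monomial. I expect this to be the hard step, and I would settle it in one of two equivalent ways. The first is Bergman's Diamond Lemma: take the straightening relations together with $gX_i=\chi_i(g)X_ig$ as a reduction system and verify that every overlap ambiguity resolves; the only nontrivial ambiguities are the triple overlaps governed by the quantum Serre relations, and finiteness of the Cartan type together with convexity of the order forces confluence, whence the ordered monomials are a basis outright. The second is to produce a faithful module --- a Verma-type module induced from the ``Borel'' subalgebra, or equivalently the nondegeneracy of a Drinfeld pairing between the two triangular halves --- on which the ordered monomials act by linearly independent operators. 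Either route converts the spanning statement into a basis statement and completes the proof; I regard the confluence (equivalently, nondegeneracy) verification as the genuine obstacle, everything else being bookkeeping with the convex order.
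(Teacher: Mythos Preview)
The paper does not prove this statement at all: it simply cites it as \cite[Theorem 3.3]{AH}, the Andruskiewitsch--Schneider classification paper. So there is no ``paper's own proof'' to compare against; your proposal is an attempt to reconstruct the argument that lives in the cited reference.

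That said, your outline is broadly in the right spirit and tracks the actual proof in \cite{AH}. There the authors also separate spanning from independence, use the coradical filtration (your degree filtration is essentially this), identify the associated graded with the $\lambda=0$ case, and invoke Kharchenko's PBW theory for diagonal braidings to obtain the basis of ordered root-vector monomials. The Levendorskii--Soibelman straightening and Diamond Lemma alternatives you describe are standard and equivalent packagings of the same confluence argument. One point to be careful about: you write that $U(\mathcal{D},0)$ is the bosonization of the pre-Nichols algebra cut out by the quantum Serre relations, but for $\lambda=0$ the linking relations $\text{ad}_c(X_i)(X_j)=0$ for $i\not\sim j$ are still imposed, so the braided part is the tensor product of the connected-component pre-Nichols algebras, not a single pre-Nichols algebra of $V$; this matters for the shape of the Kharchenko basis. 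Also, the ``genuine obstacle'' you flag---confluence of the triple overlaps---is handled in \cite{AH} not by a direct Diamond Lemma computation but by appeal to Lusztig's and Kharchenko's results on the structure of the positive part, so if you pursue this route you should expect to quote rather than verify.
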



%

\subsection{Nichols algebras and the Hopf algebras $\mathcal{U}(\mathcal{D}_{red})$.}
In this subsection, some results about Nichols algebras of diagonal type will be reviewed, and then we will recall the Hopf algebras $\mathcal{U}(\mathcal{D}_{red})$. And one can refer to \cite{ADH,AN} for more details.

\emph{Yetter-Drinfeld category over finite abelian groups.}  Let $G$ be an abelian group. A Yetter-Drinfeld module over the group algebra $\Bbbk G$ can be described as a $G$-graded vector space which is a $G$-module such that all $g$-homogeneous components are stable under the $G$-action, where $g\in G$. Denote the category
of Yetter-Drinfeld modules over $\Bbbk G$ by $_G^G\mathcal{YD}$.

For $V \in _G^G\mathcal{YD}$, $g\in G$, and $\chi\in \widehat{G}$ we define $V_g = \{v\in V | \;\rho(v)=g\otimes v\}$ and $V_g^\chi=\{v\in V_g| \;h.v=\chi(h)v \text{ for all } h\in G \}$. The category $_G^G\mathcal{YD}$ is braided, and for $V,W\in _G^G\mathcal{YD}$
$$c:V\otimes W \rightarrow W\otimes V, v\otimes w \mapsto g.w \otimes v, v\in V_g, w \in W,$$
defines the braiding on $V\otimes W$.

Let $g_1,...,g_\theta \in G, \chi_1,...,\chi_\theta \in G, \theta\in \mathbb{N}^+$. Let $V\in _G^G\mathcal{YD}$ with basis $v_i\in V_{g_i}^{\chi_i},1\leq i \leq \theta$. Then $V$ is a braided vector space with
$$c(v_i\otimes v_j)=\chi_j(g_i)v_j\otimes v_i, \quad 1\leq  i, j \leq \theta.$$
The matrix $(q_{ij})_{1\leq  i, j \leq \theta}$ is called the braiding matrix of
the elements $(v_i)$, where $q_{ij}=\chi_j(g_i)$ for all $1\leq  i, j \leq \theta$. If $q_{ii}\neq 1$ and there are integers $a_{ij}$ with $a_{ii}=2$, $1\leq  i, j \leq \theta$, and $0\leq -a_{ij} < \text{ord}q_{ii}$ (which could be infinite) for $1\leq  i\neq j \leq \theta$, such that
$$q_{ij}q_{ji}=q_{ii}^{a_{ij}},\quad 1\leq  i, j \leq \theta,$$
then $(V,c)$ is called \emph{Cartan type} in \cite[Definition 4.4]{AH1}. Recall the following braided vector spaces of rank $2$ which are Cartan type (see for example \cite[Theorem 7.1]{HI}).

\begin{example}\label{ex3.1.x}
\emph{Let $n\geq 2$ and let $G=\mathbb{Z}_{2n}\times \mathbb{Z}_{2n}=\langle a,b|\;a^{2n}=b^{2n}=1,ab=ba\rangle$. Let $\omega$ be a primitive $2n$-th root of $1$. Define $g_1=a,g_2=b$ and $\chi_1,\chi_2\in \widehat{G}$ by
\begin{align*}
\chi_1(a)=\omega^{-3},\chi_1(b)=\omega^2\\
\chi_2(a)=\omega,\chi_2(b)=\omega^{-3}.
\end{align*}
Let $U_2=\oplus_{i=1}^2\Bbbk u_i\in _G^G\mathcal{YD}$ with basis $u_i\in (U_2)_{g_i}^{\chi_i}$, $1\leq i\leq 2$. Then the braiding matrix $(q_{ij})_{1\leq i,j\leq 2}$ of $(u_i)_{i=1}^2$ is
$$\begin{pmatrix} \omega^{-3} & \omega \\ \omega^2 & \omega^{-3} \end{pmatrix}.$$
By definition, $(U_2,c)$ is type $A_2$.}
\end{example}

Next, we recall a family of braided vector spaces of rank $2$ which are not of Cartan type (see for example \cite[Section 3.3]{AH1}).

\begin{example}\label{ex3.1}
\emph{Let $n$ be an odd number and let $G=\mathbb{Z}_{3n}\times \mathbb{Z}_{3n}=\langle a,b|\;a^{3n}=b^{3n}=1,ab=ba\rangle$. Let $\alpha$ be a primitive $3n$-th root of $1$. Define $g_1=a,g_2=b$ and $\chi_1,\chi_2\in \widehat{G}$ by
\begin{align*}
\chi_1(a)=\alpha^{-2},\chi_1(b)=\alpha\\
\chi_2(a)=\alpha,\chi_2(b)=\alpha^{n}.
\end{align*}
Let $V_2=\oplus_{i=1}^2\Bbbk v_i\in _G^G\mathcal{YD}$ with basis $v_i\in (V_2)_{g_i}^{\chi_i}$, $1\leq i\leq 2$. Then the braiding matrix $(q_{ij})_{1\leq i,j\leq 2}$ of $(v_i)_{i=1}^2$ is
$$\begin{pmatrix} \alpha^{-2} & \alpha \\ \alpha & \alpha^{n} \end{pmatrix}.$$}
\end{example}

Let $V\in _G^G\mathcal{YD}$ with basis $v_i\in V_{g_i}^{\chi_i},1\leq i \leq \theta$. The tensor algebra $T(V)$ is a braided Hopf algebra in $_G^G\mathcal{YD}$ where the elements in $V$ are primitive. We identify $T(V)$ with the free algebra $\Bbbk\langle v_1,...,v_\theta\rangle$. Recall that the $\Bbbk$-linear endomorphism $\text{ad}_c(v_i)$ of the free algebra is given for all $y = x_{i1}...x_{it}$ by
the braided commutator
$$\text{ad}_c(v_i)(y) = v_i.y-q_{ii_1}...q_{ii_t}y.v_i.$$
The smash product $T(V)\# \Bbbk G$ is a usual Hopf algebra (also called a bosonization of $T(V)$) with
$$gv_ig^{-1}=\chi_i(g)v_i,\;\Delta(v_i)=g_i\otimes v_i+v_i\otimes 1, \; \Delta(g)=g\otimes g .$$
for all $1\leq i \leq \theta,g\in G$.

\emph{Nichols algebra of $V$.} Let $V \in _G^G\mathcal{YD}$. A braided graded Hopf algebra
$$R=\oplus_{n\geq 0}R(n) \quad \text{in }  _G^G\mathcal{YD}$$
is a Nichols algebra of $V$ if $V \cong R(1)$ in $_G^G\mathcal{YD}$, and if
\begin{itemize}
\item[(i)] $R(0)=\Bbbk 1$;
\item[(ii)] $R(1)=P(R)$, where $P(R)=\{x\in R|\;\Delta(x)=x\otimes 1+ 1\otimes x\}$ is the set of primitive elements of $R$;
\item[(iii)] $\Bbbk \langle R(1)\rangle=R$, the $\Bbbk \langle R(1)\rangle$ denotes the subalgebra generated by $R(1)$.
\end{itemize}
The Nichols algebra of $V$ exists and up to isomorphism it is uniquely determined by $V$ (see for example \cite[Section 3]{AH1}), which can be given by $T(V)/\mathcal{J}(V)$, here $\mathcal{J}(V)$ is the largest graded coideal of the $T(V)$ such that $\mathcal{J}(V)\cap V=(0)$. Denote the Nichols algebra of $V$ by $\mathfrak{B}(V)$. As an algebra and coalgebra, $\mathfrak{B}(V)$ only depends on the braided vector space $(V, c)$. The Nichols algebra of $U_2$ given in Example \ref{ex3.1.x} can be represented as follows.

\begin{example}\label{ex3.2.x}
\emph{Let $(U_2,c)$ be the braided space in Example \ref{ex3.1.x}. Define
\begin{align*}
a=u_1,\;b=\text{ad}_c u_1(u_2),\;c=u_2.
\end{align*}
Then $\mathfrak{B}(U_2)$ is generated by $\{a,b,c\}$ as an algebra subject to
\begin{align}
\label{eq2.5.y}&a^{2n}=b^{2n}=c^{2n}=0,\\
\label{eq2.6.y}&ba=q_{11}^{-1}q_{12}^{-1}ab,\;ca=q_{12}^{-1}(ac-b),\;cb=q_{12}^{-1}q_{22}^{-1}bc,
\end{align}
and the set $\{a^i b^j c^k|\;0\leq i,j ,k < 2n\}$ is a linear basis of $\mathfrak{B}(U_2)$. Hence $\dim(\mathfrak{B}(U_2))=(2n)^3$.}
\end{example}

The Nichols algebra of $V_2$ given in Example \ref{ex3.1}, is called \emph{type $T_3$} in \cite{HI}. And it had been described explicitly in \cite[Theorem 7.1]{HI}.

\begin{example}\label{ex3.2}
\emph{Let $(V_2,c)$ be the braided space in Example \ref{ex3.1}. Define
\begin{align*}
a=v_1,\;b=\text{ad}_c v_1(v_2),\;c=\text{ad}_c^2 v_1(v_2),\;d=v_2.
\end{align*}
Then $\mathfrak{B}(V_2)$ is generated by $\{a,b,c,d\}$ as an algebra subject to
\begin{align}
\label{eq2.5.x}&a^3=b^3=c^{3n}=d^{3n}=0,\\
\label{eq2.6.x}&ba=q_{11}^{-1}q_{12}^{-1}(ab-c),\;ca=(2q_{11}q_{12})^{-1}ac,\;da=q_{12}^{-1}(ad-b),\\
\label{eq2.7.x}&cb=q_{11}^2q_{12}bc,\;db=(q_{12}q_{22})^{-1}bd,\;dc=q_{12}^{-2}q_{22}^{-1}[cd-\frac{\mu}{(2)_{\omega}! } b^2],
\end{align}
where $\omega=q_{22}^{-1},\mu=(1+q_{11}^{-1})q_{21}^{-1}-(1+q_{11})q_{12}$, and the set $\{a^i b^j c^k d^l|\;0\leq i,j < 2,\;0\leq k,l < 3n\}$ is a linear basis of $\mathfrak{B}(V_2)$. Hence $\dim(\mathfrak{B}(V_2))=3^4 n^2$.}
\end{example}

\emph{The Hopf algebras $\mathcal{U}(\mathcal{D}_{red})$.} Let $\mathfrak{q}=(q_{ij})_{1\leq i,j \leq \theta}$ be a matrix of elements in $\Bbbk$ such that $q_{ii}\neq1$ for all $1\leq i \leq \theta$. Let $G$ be a finite abelian group. A reduced YD-datum (see \cite[Definition 3.2]{ADH} for example) is a collection $\mathcal{D}_{red}=(f_i, g_i, \chi_i)_{1\leq i\leq \theta}$, where $f_i,g_i\in G,\chi_i\in \widehat{G}$, $1\leq i \leq \theta$ such that
$$q_{ij}=\chi_j(g_i)=\chi_i(f_j),\;f_ig_i\neq 1,\;1\leq i,j \leq \theta.$$
Now fix a reduced YD-datum as above and define
\begin{align*}
&V=\oplus_{i=1}^\theta \Bbbk v_i \in _G^G\mathcal{YD}, \quad \quad \text{with basis } v_i\in V_{g_i}^{\chi_i},1\leq i \leq \theta,\\
&W=\oplus_{i=1}^\theta \Bbbk w_i \in _G^G\mathcal{YD}, \quad \;\; \text{with basis } w_i\in W_{f_i}^{\chi_i^{-1}},1\leq i \leq \theta.
\end{align*}
Let $\mathcal{U}(\mathcal{D}_{red})$ be the quotient of $T(V\oplus W)\# \Bbbk G$ by the ideal generated by the relations of the Nichols algebras $\mathcal{J}(V)$ and $\mathcal{J}(W)$, together with
$$v_iw_j-\chi_j^{-1}(g_i)w_jv_i-\delta_{i,j}(f_ig_i-1),\;1\leq i,j \leq \theta.$$
To express $\mathcal{U}(\mathcal{D}_{red})$ as a quotient of a Drinfeld double, we recall the result in \cite[Theorem 3.7]{ADH} as follows. Let $A=\mathfrak{B}(V)\# \Bbbk G$ and let $U=\mathfrak{B}(W)\# \Bbbk \widehat{G}$. By \cite[Theorem 3.7]{ADH}, there is a non-degenerate skew-Hopf bilinear form $\langle,\rangle:U\otimes A\rightarrow \Bbbk$ given by
\begin{align*}
\langle \chi, g\rangle=\chi^{-1}(g),\;\langle \chi, v_i\rangle=0;\\
\langle w_i, g\rangle=0,\;\langle w_i, v_j\rangle=-\delta_{ij},
\end{align*}
where $g\in G,\chi \in \widehat{G}$. The following result is special case of \cite[Theorem 4.11]{RH1}.
\begin{theorem}\label{thm2.5}
$\mathcal{U}(\mathcal{D}_{red})\cong D(A)/\langle \chi_if_i-1|\;1\leq i\leq \theta\rangle$ as Hopf algebras, where $\langle \chi_if_i-1|\;1\leq i\leq \theta\rangle$ is the ideal generated by $\{\chi_if_i-1|\;1\leq i\leq \theta\}$.
\end{theorem}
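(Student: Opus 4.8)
The plan is to use the non-degenerate skew-Hopf pairing of \cite[Theorem 3.7]{ADH} to place both $A$ and a copy of $U$ inside $D(A)$, and then to build mutually inverse Hopf algebra maps between $\mathcal{U}(\mathcal{D}_{red})$ and $D(A)/\langle\chi_if_i-1\rangle$; this is the concrete shape taken by \cite[Theorem 4.11]{RH1} in the present setting. First I would exploit non-degeneracy. Since $A=\mathfrak{B}(V)\#\Bbbk G$ is connected over $\Bbbk G$ and each $v_i$ is nilpotent, every algebra character of $A$ kills the $v_i$ and hence factors through $\Bbbk G$; thus $G(A^\ast)=\widehat{G}$, and in $D(A)=(A^\ast)^{cop}\otimes A$ both $G\subseteq A$ and $\widehat{G}\subseteq A^\ast$ appear as group-likes, with $v_i$ skew-primitive of $G$-degree $g_i$. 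The non-degenerate pairing $\langle\,,\rangle\colon U\otimes A\to\Bbbk$ induces a Hopf algebra isomorphism $U\xrightarrow{\ \sim\ }(A^\ast)^{cop}$ (using $\dim\mathfrak{B}(W)=\dim\mathfrak{B}(V)$, the braiding matrix of $W$ being the inverse-transpose of that of $V$, so that $\dim U=\dim A$), under which $\chi\in\widehat{G}$ maps to the corresponding group-like of $A^\ast$ and $w_i$ maps to a skew-primitive element. In particular the generators $G$, $v_i$, $w_i$ of $\mathcal{U}(\mathcal{D}_{red})$ all acquire canonical images in $D(A)$.

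Next I would define $\Psi\colon\mathcal{U}(\mathcal{D}_{red})\to D(A)/I$, where $I=\langle\chi_if_i-1\mid 1\le i\le\theta\rangle$, on generators by $g\mapsto g$, $v_i\mapsto v_i$, $w_i\mapsto w_i$, and in the other direction $\overline{\Phi}\colon D(A)/I\to\mathcal{U}(\mathcal{D}_{red})$ by $a\mapsto a$ for $a\in A$, $w_i\mapsto w_i$, and $\chi_i\mapsto f_i^{-1}$. The map $\overline{\Phi}$ sends $\chi_if_i-1$ to $f_i^{-1}f_i-1=0$, so it is compatible with $I$; its well-definedness on the group-likes, i.e.\ that $\chi_i\mapsto f_i^{-1}$ extends to a homomorphism $\widehat{G}\to G$, is exactly where the reduced structure of $\mathcal{D}_{red}$ enters. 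Once both are shown to be algebra maps, comparing them on generators yields $\Psi\circ\overline{\Phi}=\id$ and $\overline{\Phi}\circ\Psi=\id$ (recall $\chi_i=f_i^{-1}$ in $D(A)/I$), so they are mutually inverse; being assembled from coalgebra maps on each tensor factor of $D(A)$, they are automatically Hopf algebra isomorphisms.

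The heart of the argument, and the step I expect to be the main obstacle, is verifying that the defining cross relation of $\mathcal{U}(\mathcal{D}_{red})$ holds in $D(A)/I$. Using the multiplication rule $(f\otimes h)(g\otimes k)=f\,(h_{(1)}\rightharpoonup g\leftharpoonup S^{-1}(h_{(3)}))\otimes h_{(2)}k$ of $D(A)$ together with $\Delta^{(2)}(v_i)=g_i\otimes g_i\otimes v_i+g_i\otimes v_i\otimes 1+v_i\otimes 1\otimes 1$ and the pairing values $\langle w_j,v_i\rangle=-\delta_{ij}$, $\langle w_j,g\rangle=0$, one computes in $D(A)$ a relation of the form
\[
v_iw_j-\chi_j^{-1}(g_i)\,w_jv_i=\delta_{ij}\,(g_i\chi_i^{-1}-1).
\]
Modulo $I$ we have $\chi_i^{-1}=f_i$, so the right-hand side becomes $\delta_{ij}(f_ig_i-1)$, which is precisely the defining relation. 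All the care here lies in tracking the three summands of $\Delta^{(2)}(v_i)$, the hit actions $\rightharpoonup,\leftharpoonup$, and the $cop$ placement of $w_j$, so that the coefficient lands on $\chi_j^{-1}(g_i)$ and the group-like on $g_i\chi_i^{-1}$ with the correct sign; reconciling these conventions with the skew-Hopf pairing of \cite{ADH} is where essentially all of the bookkeeping lives. The remaining relations — the Nichols relations $\mathcal{J}(V)$, $\mathcal{J}(W)$ and the $G$- and $\widehat{G}$-commutation rules — transfer immediately, since they already hold inside $A$ and inside $(A^\ast)^{cop}\cong U$.

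Finally, as a consistency check and an alternative to the inverse-map argument, I would verify via the same multiplication rule that each $\chi_if_i$ is a \emph{central} group-like element of $D(A)$: it commutes with $G$ and $\widehat{G}$, and with every $v_j,w_j$ the braiding scalars cancel. Hence $I$ is a Hopf ideal and $D(A)$ is free over the central group subalgebra $\Bbbk N$ with $N=\langle\chi_if_i\rangle$, so that $\dim D(A)/I=\dim D(A)/|N|$. The reduced datum forces $|N|=|G|$, giving $\dim D(A)/I=(\dim\mathfrak{B}(V))^2|G|=\dim\mathcal{U}(\mathcal{D}_{red})$, in agreement with the isomorphism produced above.
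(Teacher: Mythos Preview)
The paper does not supply a proof of this statement; it simply records it as a special case of \cite[Theorem 4.11]{RH1}. Your proposal is thus not to be compared against an argument in the paper but against that citation, and your overall strategy---embed $U\cong(A^\ast)^{cop}$ and $A$ into $D(A)$ via the nondegenerate skew-Hopf pairing, verify the cross relation modulo $I$, then finish either with an explicit inverse map or with a dimension count---is the standard and correct one.

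That said, you hand-wave at precisely the two places where the argument needs input beyond the definition of reduced YD-datum reproduced in the paper. First, for $\overline{\Phi}$ to exist you need $\chi_i\mapsto f_i^{-1}$ to extend to a group homomorphism $\widehat{G}\to G$; second, for the dimension count you need $|N|=|G|$. You assert both are ``forced by the reduced datum,'' but the conditions $\chi_j(g_i)=\chi_i(f_j)$ and $f_ig_i\neq1$ alone do not guarantee either: nothing in them prevents the $\chi_i$ from generating a proper subgroup of $\widehat{G}$, and in that situation $D(A)/I$ has strictly more group-likes than $\mathcal{U}(\mathcal{D}_{red})$, so the two sides cannot even have the same dimension. (For instance, take $G=\mathbb{Z}_3\times\mathbb{Z}_3$, $\theta=1$, $g_1=f_1$ a generator of one cyclic factor, and $\chi_1$ the dual character; then $|N|=3\neq 9=|G|$.) The full statement in \cite{RH1}, and the complete definition in \cite{ADH}, carry the hypotheses that close this gap; your write-up should locate and invoke them explicitly rather than attribute the conclusion to the bare reduced structure.
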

If $\mathfrak{B}(V)$ is finite dimensional, then we can assume $\mathcal{R}$ is the standard universal $\mathcal{R}$-matrix of $D(A)$. By the theorem above, there is a universal $\mathcal{R}$-matrix of $\mathcal{U}(\mathcal{D}_{red})$ through the quotient map and we denote it as $\overline{\mathcal{R}}$ in the following content.

By definition, $\mathcal{U}(\mathcal{D}_{red})$ is generated as an algebra by the group $G$, and $\{v_i,w_i|\;1\leq i\leq \theta\}$, with the relations:
\begin{align}
\label{eq2.7} & gv_ig^{-1}=\chi_i(g)v_i,\;gw_ig^{-1}=\chi_i^{-1}(g)v_i,\; for\;  1\leq i\leq \theta,\; g\in G;\\
\label{eq2.8} &v_iw_j-\chi_j^{-1}(g_i)w_jv_i-\delta_{i,j}(f_ig_i-1),\;1\leq i,j \leq \theta;\\
\label{eq2.9} &\mathcal{J}(V),\mathcal{J}(W).
\end{align}
The coalgebra structure is given by
\begin{align*}
\Delta(g)=g\otimes g,\;\Delta(v_i)=g_i\otimes v_i+v_i\otimes 1,\; \Delta(w_i)=f_i\otimes w_i+w_i\otimes 1,
\end{align*}
for all $1\leq i\leq \theta,\; g\in G$.
Next, we give following examples which will be used in next sections.
\begin{example}\label{ex3.3.x}
\emph{We keep the notation in Example \ref{ex3.1.x}. To define a reduced YD-datum, we assume that $(n,7)=1$. Since $(n,7)=1$, we can find $n_0\in \mathbb{N}$ such that $7n_0\equiv 1(\text{mod }2n)$. To find a reduced YD-datum $\mathcal{D}_{red}=(g_i, f_i, \chi_i)_{1\leq i\leq 2}$, where $g_i,\chi_i$ are given in Example \ref{ex3.1.x} for $1\leq i \leq 2$, we need to solve the following equations
$$\chi_i(f_j)=\chi_j(g_i),\;1\leq i,j\leq 2.$$
In fact, there is one solution which is given by
$$f_1=g_1^{5n_0}g_2^{-3n_0}, f_2=g_1^{3n_0}g_2^{8n_0}.$$
Fix this reduced YD-datum $\mathcal{D}_{red}=(g_i, f_i, \chi_i)_{1\leq i\leq 2}$, $1\leq i \leq 2$. Since above discussion, the Hopf algebra $\mathcal{U}(\mathcal{D}_{red})$ is generated as an algebra $\{g,u_i,s_i|\;1\leq i\leq 2,\;g\in G\}$ subject to
\begin{align}
\label{eq2.10.y} & gv_ig^{-1}=\chi_i(g)v_i,\;gw_ig^{-1}=\chi_i^{-1}(g)v_i,\; for\;  1\leq i\leq 2,\; g\in G;\\
\label{eq2.11.y} &v_iw_j-\chi_j^{-1}(g_i)w_jv_i=\delta_{i,j}(f_ig_i-1),\;1\leq i,j \leq 2;
\end{align}
and the relations \eqref{eq2.5.y}-\eqref{eq2.6.y} in Example \ref{ex3.2.x}, and
\begin{align}
&\overline{a}^{2n}=\overline{b}^{2n}=\overline{c}^{2n}=0,\\
&\overline{b}\overline{a}=q_{11}q_{12}\overline{a}\overline{b},\;\overline{c}\overline{a}=q_{12}(\overline{a}\overline{c}-\overline{b}),\;\overline{c}\overline{b}=q_{12}q_{22}\overline{b}\overline{c},
\end{align}
where $\overline{a},\overline{b},\overline{c}$ are given by
\begin{align*}
\overline{a}=s_1,\;\overline{b}=\text{ad}_c s_1(s_2),\;\overline{c}=s_2.
\end{align*}
The set $\{\overline{a}^p \overline{b}^q \overline{c}^r a^i b^j c^kg|\;0\leq p,q,r,i,j,k< 2n,g\in G\}$ is a linear basis of $\mathcal{U}(\mathcal{D}_{red})$. Hence $\dim(\mathcal{U}(\mathcal{D}_{red}))=(2n)^8$. The coalgebra structure is given by
\begin{align*}
\Delta(g)=g\otimes g,\;\Delta(u_i)=g_i\otimes u_i+u_i\otimes 1,\; \Delta(s_i)=f_i\otimes s_i+s_i\otimes 1,
\end{align*}
for all $1\leq i\leq \theta,\; g\in G$.}
\end{example}

\begin{example}\label{ex3.3}
\emph{Using the notation in Example \ref{ex3.1} and define a reduced YD-datum by $\mathcal{D}_{red}=(g_i, g_i, \chi_i)_{1\leq i\leq 2}$, where $g_i\in G,\chi_i\in \widehat{G}$, $1\leq i \leq 2$. Since above discussion, the Hopf algebra $\mathcal{U}(\mathcal{D}_{red})$ is generated as an algebra $\{g,v_i,w_i|\;1\leq i\leq 2,\;g\in G\}$ subject to
\begin{align}
\label{eq2.10} & gv_ig^{-1}=\chi_i(g)v_i,\;gw_ig^{-1}=\chi_i^{-1}(g)v_i,\; for\;  1\leq i\leq 2,\; g\in G;\\
\label{eq2.11} &v_iw_j-\chi_j^{-1}(g_i)w_jv_i=\delta_{i,j}(g_i^2-1),\;1\leq i,j \leq 2;
\end{align}
and the relations \eqref{eq2.5.x}-\eqref{eq2.7.x} in Example \ref{ex3.2}, and
\begin{align}
&\overline{a}^3=\overline{b}^3=\overline{c}^{3n}=\overline{d}^{3n}=0,\\
&\overline{b}\overline{a}=q_{11}q_{12}(\overline{a}\overline{b}-\overline{c}),\;\overline{c}\overline{a}=(2q_{11}q_{12})\overline{a}\overline{c},\;\overline{d}\overline{a}=q_{12}(\overline{a}\overline{d}-\overline{b}),\\
&\overline{c}\overline{b}=q_{11}^{-2}q_{12}^{-1}\overline{b}\overline{c},\;\overline{d}\overline{b}=(q_{12}q_{22})\overline{b}\overline{d},\;\overline{d}\overline{c}=q_{12}^{2}q_{22}[\overline{c}\overline{d}-\frac{\overline{\mu}}{(2)_{\overline{\omega}}! } \overline{b}^2],
\end{align}
where $\overline{\omega}=q_{22},\mu=(1+q_{11})q_{21}-(1+q_{11}^{-1})q_{12}^{-1}$ and
\begin{align*}
\overline{a}=w_1,\;\overline{b}=\text{ad}_{c} w_1(w_2),\;\overline{c}=\text{ad}_{c}^2 w_1(w_2),\;\overline{d}=w_2.
\end{align*}
The set $\{\overline{a}^p \overline{b}^q \overline{c}^r \overline{d}^s a^i b^j c^k d^lg|\;0\leq p,q,i,j < 2,\;0\leq r,s,k,l < 3n,g\in G\}$ is a linear basis of $\mathcal{U}(\mathcal{D}_{red})$. Hence $\dim(\mathcal{U}(\mathcal{D}_{red}))=3^{10} n^6$. The coalgebra structure is given by
\begin{align*}
\Delta(g)=g\otimes g,\;\Delta(v_i)=g_i\otimes x_i+v_i\otimes 1,\; \Delta(w_i)=g_i\otimes w_i+w_i\otimes 1,
\end{align*}
for all $1\leq i\leq \theta,\; g\in G$.}
\end{example}

\begin{remark}
\emph{For simplicity, we denote the Hopf algebra $\mathcal{U}(\mathcal{D}_{red})$ in Example \ref{ex3.3.x} (resp. the Hopf algebra $\mathcal{U}(\mathcal{D}_{red})$ in Example \ref{ex3.3}) as $H(\omega,n)$ (resp. $K(\alpha,n)$). With some restrictions to their parameters, we will prove that $\Rep H(\omega,n)$ and $\Rep K(\alpha,n)$ are prime modular tensor categories and they are not tensor equivalent to $\Rep(u_q(\mathfrak{g}))$ for any complex simple Lie algebra $\mathfrak{g}$ in
Section 4.}
\end{remark}

\section{Main way to construct ribbon factorizable Hopf algebras}\label{sec2.2}
This section is devoted to giving a general method to construct ribbon factorizable Hopf algebras. Before this, we give some notations. Suppose $(H,R)$ is a quasitriangular Hopf algebra and $\pi:H\rightarrow K$ is a surjective Hopf map. Then it's known that $(K,(\pi\otimes \pi)(R))$ is a quasitriangular Hopf algebra. In particular, if $I$ is Hopf ideal of $H$ then $(H/I,\overline{R})$ is also a quasitriangular Hopf algebra, where $\overline{R}$ is defined through the natural quotient map $\pi(h)=h+I$ for $h\in H$. For convenience, we denote $K^+$ as $\ker\epsilon$ for a Hopf algebra $K$, here $\epsilon$ is counit of $K$.

Let $(H,R)$ be a quasitriangular Hopf algebra. For convenience, we denote the map $f_{R_{21}R}$ in \eqref{eq2.5} as $\Phi_R$ in the following content. To construct factorizable Hopf algebras, we use following lemma.
\begin{lemma}\label{lem3.0.0.x}
Let $(H, R)$ be a factorizable Hopf algebra. Assume $G\subseteq (G(H)\cap Z(H))$, where $Z(H)$ is the center of $H$. Then $(H/H(\Bbbk G)^+,\overline{R})$ is a factorizable Hopf algebra if and only if the matrix $[(\Phi_R^{-1}(g))(h)]_{g,h\in G}$ is non-degenerate, where $\overline{R}$ is induced from $R$ through the natural quotient map $\pi:H\rightarrow H/H(\Bbbk G)^+$.
\end{lemma}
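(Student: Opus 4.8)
The plan is to first reduce factorizability of the quotient to a transversality property of the Drinfel'd map of $H$, and then to identify that property with the non-degeneracy of $M$. Since $G\subseteq G(H)\cap Z(H)$, the subalgebra $\Bbbk G$ is a central, hence normal, Hopf subalgebra, so $U:=H(\Bbbk G)^+$ is a Hopf ideal and $(\overline H,\overline R)=(H/U,(\pi\otimes\pi)(R))$ is quasitriangular with $\overline Q:=(\pi\otimes\pi)(Q)$, $Q=R_{21}R$. Writing $\iota:\overline H^{\ast}\hookrightarrow H^{\ast}$ for the transpose of $\pi$, whose image is the annihilator $U^{\circ}:=\{a\in H^{\ast}:a|_{U}=0\}$, a direct computation using $\Phi_R=f_{R_{21}R}$ gives $\Phi_{\overline R}=\pi\circ\Phi_R\circ\iota$. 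As $H$ is factorizable, $\Phi_R$ is bijective, so $\dim\Phi_R(U^{\circ})=\dim U^{\circ}=\dim H-\dim U$ and thus $\dim\Phi_R(U^{\circ})+\dim U=\dim H$. Since $\ker\Phi_{\overline R}\cong\Phi_R(U^{\circ})\cap U$ and the domain and codomain of $\Phi_{\overline R}$ have equal dimension, $\overline H$ is factorizable iff $\Phi_{\overline R}$ is injective iff $\Phi_R(U^{\circ})\cap U=0$, i.e. iff $U^{\circ}\cap\Phi_R^{-1}(U)=0$.

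Next I would encode this through the bilinear form $\langle x,y\rangle:=\Phi_R^{-1}(x)(y)$ on $H$, which is non-degenerate precisely because $\Phi_R$ is bijective. For the form-orthogonal $U^{\perp}:=\{x\in H:\langle x,u\rangle=0\ \forall u\in U\}$ one has $\Phi_R(U^{\circ})=U^{\perp}$, so the condition above becomes $U\cap U^{\perp}=0$, that is, $\langle\,,\rangle|_{U}$ is non-degenerate. On the other hand $M_{g,h}=(\Phi_R^{-1}(g))(h)=\langle g,h\rangle$, so $M$ is exactly the Gram matrix of $\langle\,,\rangle|_{\Bbbk G}$ in the basis $G$, and $M$ is non-degenerate iff $\langle\,,\rangle|_{\Bbbk G}$ is non-degenerate (a finite square Gram matrix being one-sided degenerate iff two-sided degenerate). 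Everything therefore reduces to the single claim: for the fixed non-degenerate form $\langle\,,\rangle$ on $H$, the restriction to $U$ is non-degenerate iff the restriction to $\Bbbk G$ is.

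To prove this claim I would exploit the decomposition forced by centrality of $\Bbbk G$. The primitive idempotents $f_{\chi}=\tfrac1{|G|}\sum_{g}\chi(g)^{-1}g$ ($\chi\in\widehat G$) are central, giving $H=\prod_{\chi}H_{\chi}$ with $H_{\chi}=f_{\chi}H$, $\Bbbk G=\bigoplus_{\chi}\Bbbk f_{\chi}$, and $U=\bigoplus_{\chi\neq\epsilon}H_{\chi}$; by Nichols--Zoeller freeness of $H$ over $\Bbbk G$ all $H_{\chi}$ have the same dimension $\dim H/|G|$. Dually $H^{\ast}=\bigoplus_{\chi}V_{\chi}$ with $V_{\chi}=(H_{\chi})^{\ast}$, and writing $B_Q(a,b):=(a\otimes b)(Q)=b(\Phi_R(a))$ (non-degenerate since $\Phi_R$ is an isomorphism), a short calculation yields the key orthogonality $B_Q(\Phi_R^{-1}(f_{\chi}),b)=b(f_{\chi})=\delta_{\chi,\psi}\,b(f_{\chi})$ for $b\in V_{\psi}$, so $\Phi_R^{-1}(f_{\chi})$ pairs nontrivially only against $V_{\chi}$. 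The idea is that this orthogonality, combined with the fact that $H$ is \emph{already} factorizable (so $B_Q$ is non-degenerate in every non-group direction), should carry the radical of $\langle\,,\rangle|_{U}$ isomorphically onto the kernel of the $|G|\times|G|$ block $M$: no degeneracy can be created by passing to $\overline H$ except through the collapsed group $G$.

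The hard part is exactly this last matching, because a priori the radical of $\langle\,,\rangle|_{U}$ lives in a space of dimension up to $\dim H/|G|$, while $\ker M$ sits in the $|G|$-dimensional space $\Bbbk G$; the content is that factorizability of $H$ collapses the former onto the latter. Here I expect to need the finer structure of the Drinfel'd map—its equivariance for the adjoint actions and its restriction to an algebra isomorphism from the cocommutative elements of $H^{\ast}$ onto $Z(H)$ (into which $\Bbbk G$ embeds)—to show that the group block is the only possible source of degeneracy. As a cross-check, the statement also has a categorical reading via Theorem~\ref{thm2.2}: $\Rep(\overline H)$ is the topologizing braided tensor subcategory of the non-degenerate category $\Rep(H)$ on which $G$ acts trivially, and its non-degeneracy is governed by the relative M\"uger center, a pointed piece of dimension $|G|$ whose braiding form is precisely $M$.
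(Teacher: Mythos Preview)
Your algebraic reduction up through the equivalence ``$\overline H$ is factorizable iff $\langle\,,\rangle|_{U}$ is non-degenerate'' is correct and cleanly argued. The gap is exactly where you locate it: the passage from non-degeneracy of $\langle\,,\rangle$ on the large ideal $U$ to non-degeneracy on the small subalgebra $\Bbbk G$. Your idempotent-decomposition sketch does not close this. The orthogonality $B_Q(\Phi_R^{-1}(f_\chi),b)=\delta_{\chi,\psi}\,b(f_\chi)$ for $b\in V_\psi$ only controls how the one-dimensional pieces $\Bbbk f_\chi$ pair against the blocks $H_\psi$; it says nothing about how the ``non-group'' directions inside some $H_\chi$ pair against those of another $H_{\chi'}$, and that is where a priori the radical of $\langle\,,\rangle|_U$ could live. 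The further properties of the Drinfel'd map you list (adjoint equivariance, the algebra isomorphism from cocommutatives onto $Z(H)$) do not obviously force this radical down to dimension at most $|G|$.

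The paper does not attempt this matching at all. Its proof is precisely what you relegate to a ``cross-check'': it takes the pointed subcategory $\mathcal F\subseteq\Rep(H)$ generated by the one-dimensional modules $\Bbbk_g$ (with $H$-action through $\Phi_R^{-1}(g)\in G(H^*)$), verifies by a direct braiding computation that $V\in C_{\Rep(H)}(\mathcal F)$ iff $g\cdot v=v$ for all $g\in G$, i.e.\ $C_{\Rep(H)}(\mathcal F)=\Rep(\overline H)$, and then invokes Theorem~\ref{thm2.2}. That theorem is exactly the ingredient your argument lacks: in a non-degenerate ambient category, a topologizing braided tensor subcategory is non-degenerate iff its M\"uger centralizer is. This ``swap'' replaces the hard question about $\Rep(\overline H)$ (equivalently about $U$) by the easy question about $\mathcal F$ (equivalently about $\Bbbk G$), whose braiding form is $M$. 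So your final paragraph is not a cross-check---it \emph{is} the proof; once you spell out $C_{\Rep(H)}(\mathcal F)=\Rep(\overline H)$ and appeal to Theorem~\ref{thm2.2}, you are done. Completing your direct route through $U$ would amount to reproving a special case of Theorem~\ref{thm2.2} by hand.
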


\begin{proof}
Denote the Hopf algebra $H/H(\Bbbk G)^+$ as $K$. Since $K$ is quotient Hopf algebra of $H$, we can view the braided tensor category $(\Rep(K), \otimes, \Bbbk, \overline{c})$ as a braided tensor subcategory of $(\Rep(H), \otimes, \Bbbk, c)$, where $\overline{c}$ is given by $\tau \circ \overline{R}$. Let $\mathcal{D}=\Rep(H)$ and let $\mathcal{E}=\Rep(K)$. Because $(H, R)$ is a factorizable Hopf algebra, $\mathcal{D}$ is a non-degenerate finite braided tensor category. Hence we can use Theorem \ref{thm2.2} as follows. Since $G\subseteq (G(H)\cap Z(H))$, we know $\Phi_R^{-1}(g)\in G(H^*)$ for $g\in G$. Let $g\in G$ and let $\Bbbk_g$ be the one-dimensional representation of $H$ determine by $s.1=\Phi_R^{-1}(g)(s)1$ for $s\in H$. Let $\mathcal{F}$ be the full subcategory of $\mathcal{D}$ with objects
$$\Ob(\mathcal{F}) := \{\text{finite direct sums of } \{\Bbbk_g,\;g\in G\}\}.$$
By definition of $\mathcal{F}$, it is a topologizing braided tensor subcategory of $\mathcal{D}$. Let $V\in \mathcal{D}$. By directly computation, we know $V\in C_\mathcal{D}(\mathcal{F})$ if and only if $g.v=v$ for all $g\in G,\;v\in V$. i.e. $H(\Bbbk G)^+.V=0$. This is equivalent to say $\mathcal{E}=C_\mathcal{D}(\mathcal{F})$ due to the definition of $\Rep(K)$. By Theorem \ref{thm2.2}, we know that $\mathcal{F}$ is non-degenerate if and only if $\mathcal{E}$ is non-degenerate. It's not hard to see that $\mathcal{F}$ is non-degenerate if and only if the matrix $[(\Phi_R^{-1}(g))(h)]_{g,h\in G}$ is  non-degenerate, and hence we complete the proof.
\end{proof}

In order to compute more convenient, we introduce following lemma. Assume $G=\langle x_i|\;x_i^{n_i}=1, x_ix_j=x_jx_i\rangle_{1\leq i,j\leq m}$ as groups. Suppose $\eta$ is bicharacter on $G$ which is given by $\eta(x_i,x_j)=t_j^{m_{ij}}$, where $t_j$ is primitive $n_j$th root of unity. Denote the matrix $(m_{ij})_{1\leq i,j\leq m}$ as $M$.

\begin{lemma}\label{lem3.4}
The matrix $\left[\eta(g,h)\right]_{g,h\in G}$ is non-degenerated if and only if the following equation has a unique solution in $\mathbb{Z}_{n_1}\times ...\times \mathbb{Z}_{n_m}$
\begin{align*}
(i_1,...,i_m)M=(0,...,0),\;(i_1,...,i_m)\in  \mathbb{Z}_{n_1}\times ...\times \mathbb{Z}_{n_m}.
\end{align*}
\end{lemma}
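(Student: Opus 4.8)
The plan is to read the matrix $[\eta(g,h)]_{g,h\in G}$ off of the group homomorphism it encodes into the character group $\widehat{G}$, and then to reduce its non-degeneracy to the triviality of a kernel computed directly from $M$. Since $G\cong \Z_{n_1}\times\cdots\times\Z_{n_m}$ and $\eta$ is a bicharacter, for each fixed $g\in G$ the map $h\mapsto \eta(g,h)$ is a character of $G$; write $\Psi(g):=\eta(g,-)\in\widehat{G}$. Bilinearity of $\eta$ makes $\Psi:G\to\widehat{G}$ a group homomorphism, and the $g$-th row of $[\eta(g,h)]_{g,h\in G}$ is exactly the list of values of the character $\Psi(g)$. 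First I would argue that the square matrix is non-degenerate precisely when $\Psi$ is injective: distinct characters of a finite abelian group are linearly independent and $|G|=|\widehat{G}|$, so if the $\Psi(g)$ are pairwise distinct the rows form a basis of the space of $\Bbbk$-valued functions on $G$ and the matrix is invertible, whereas if $\Psi(g)=\Psi(g')$ for some $g\neq g'$ two rows coincide and the matrix is singular. Hence non-degeneracy is equivalent to $\ker\Psi$ being trivial.

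Next I would compute $\ker\Psi$ explicitly. Writing $g=\prod_i x_i^{a_i}$, the element $g$ lies in $\ker\Psi$ iff $\eta(g,h)=1$ for all $h\in G$; since the $x_j$ generate $G$ and $\eta$ is bilinear, this holds iff $\eta(g,x_j)=1$ for every $j$. Using $\eta(x_i,x_j)=t_j^{m_{ij}}$ and bilinearity gives $\eta(g,x_j)=t_j^{\sum_i a_i m_{ij}}$, so, because $t_j$ is a primitive $n_j$-th root of unity, the condition becomes $\sum_i a_i m_{ij}\equiv 0 \pmod{n_j}$ for each $j$. (One checks this congruence is well-defined under $a_i\mapsto a_i+n_i$ thanks to the relation $n_j\mid n_i m_{ij}$ that already makes $\eta$ well-defined on $G$.) Setting $(i_1,\dots,i_m):=(a_1,\dots,a_m)$, the $m$ congruences say precisely that $(i_1,\dots,i_m)M=(0,\dots,0)$ in $\Z_{n_1}\times\cdots\times\Z_{n_m}$. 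Therefore $\ker\Psi$ is trivial iff this equation has only the trivial solution, and combining with the previous paragraph finishes the chain of equivalences.

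The kernel computation is routine bilinear bookkeeping. The step I expect to require the most care is the reduction of matrix non-degeneracy to injectivity of $\Psi$: I would anchor it on the classical linear independence of distinct characters, or equivalently on the orthogonality relations $\sum_{h\in G}\chi(h)\chi'(h)^{-1}=|G|\,\delta_{\chi,\chi'}$, which hold over $\Bbbk$ since $|G|$ is invertible in $\Bbbk$ and all character values are roots of unity. This is the only place where the field hypotheses of the paper enter, and stating it cleanly (rather than the arithmetic) is the main obstacle.
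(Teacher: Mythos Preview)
Your proof is correct and follows essentially the same approach as the paper: define the homomorphism $\Psi:G\to\widehat{G}$ by $\Psi(g)=\eta(g,-)$, identify non-degeneracy of the matrix with injectivity of $\Psi$, and compute $\ker\Psi$ in terms of $M$. The paper's own proof is a terse two-sentence version of this, asserting both equivalences ``directly'' and ``by definition'' without spelling out the linear-independence-of-characters argument or the kernel computation that you provide.
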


\begin{proof}
Denote the dual group of $G$ as $\widehat{G}$. Define $\gamma:G\rightarrow \widehat{G}$ by $\gamma(g)(h)=\eta(g,h)$ for $g,h\in G$. Directly, we see that $\left[\eta(g,h)\right]_{g,h\in G}$ is non-degenerated if and only if $\gamma$ is injective. By definition, $\gamma$ is injective if and only if the following equation has unique solution
$$ (i_1,...,i_m)M=(0,...,0),\;(i_1,...,i_m)\in  \mathbb{Z}_{n_1}\times ...\times \mathbb{Z}_{n_m}.$$
\end{proof}
Let $m\in \mathbb{N}$. Assume $G\subseteq G(H)$ is a subgroup such that $G=\mathbb{Z}_{n_1}\times\cdots \times \mathbb{Z}_{n_m}=\langle g_1,\cdots,g_m|\;g_i^{n_i}=1,\;g_ig_j=g_jg_i,\;1\leq i,j\leq m\rangle$ as groups. Moreover, we suppose that $\overline{G}\subseteq G(H^*)$ is a subgroup such that $\overline{G}=\mathbb{Z}_{n_1}\times\cdots \times \mathbb{Z}_{n_m}=\langle \chi_1,\cdots,\chi_m|\;\chi_i^{n_i}=1,\;\chi_i\chi_j=\chi_j\chi_i,\;1\leq i,j\leq m\rangle$ as groups. Let $t_j$ be a primitive $n_j$th root of $1$ and let $\chi_i(g_j)\chi_j(g_i)=t_j^{m_{ij}}$ for $1\leq i,j\leq m$. Then we have

\begin{corollary}\label{coro3.1}
Keeping above notation, assuming $\{a_i|\;1\leq i\leq n\}\subseteq H$ and $H$ is generated by $\{a_i|\;1\leq i\leq n\}$ as an algebra. If the following conditions hold
\begin{itemize}
 \item[(i)] $\chi_i\rightharpoonup a_j\leftharpoonup \chi_i^{-1}=g_i^{-1}a_j g_i$ for $1\leq i\leq m,\;1\leq j\leq n$;
  \item[(ii)] $(i_1,...,i_m)M=(0,...,0),\;(i_1,...,i_m)\in  \mathbb{Z}_{n_1}\times ...\times \mathbb{Z}_{n_m}$ has a unique solution;
\end{itemize}
then $\langle \chi_i g_i|\;1\leq i\leq m\rangle\subseteq Z(D(H))$ and $(D(H)/I,\overline{\mathcal{R}})$ is factorizable Hopf algebra, where $I=D(H)\langle \chi_i g_i|\;1\leq i\leq m\rangle^{+}$ and $\mathcal{R}$ is the standard universal $\mathcal{R}$-matrix of $D(H)$.
\end{corollary}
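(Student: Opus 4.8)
The plan is to apply Lemma~\ref{lem3.0.0.x} to the Drinfel'd double $(D(H),\mathcal{R})$, which is factorizable by the standard theory of Drinfel'd doubles. Setting $G':=\langle \chi_ig_i\mid 1\le i\le m\rangle$ and $I=D(H)(\Bbbk G')^{+}$, Lemma~\ref{lem3.0.0.x} reduces the factorizability of $(D(H)/I,\overline{\mathcal{R}})$ to two points: that $G'\subseteq G(D(H))\cap Z(D(H))$, so that the lemma applies, and that the matrix $\bigl[(\Phi_{\mathcal{R}}^{-1}(l))(s)\bigr]_{l,s\in G'}$ is non-degenerate. I would use hypothesis (i) for the first and hypothesis (ii) for the second.

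For centrality, I would first note that each $\chi_ig_i$ is group-like in $D(H)$ and that $g_i\chi_i=\chi_ig_i$: since $\chi_i$ is a character and $g_i$ is group-like, the cross relation $g_i\chi_i=(g_i\rightharpoonup\chi_i\leftharpoonup g_i^{-1})g_i$ collapses, as $g_i\rightharpoonup\chi_i\leftharpoonup g_i^{-1}=\chi_i$. Conjugation by $\chi_i$ stabilizes $H\subseteq D(H)$ and restricts to an algebra automorphism of $H$ realizing the biaction in (i); hypothesis (i) says this automorphism agrees with $h\mapsto g_i^{-1}hg_i$ on the generators $a_j$, hence on all of $H$ since $H=\langle a_j\rangle$. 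This gives the identity $\mathrm{ad}_{\chi_i}=\mathrm{ad}_{g_i^{-1}}$ on $H$, so $\chi_ig_i$ commutes with $H$. Reading the same identity as $\mathrm{ad}_{\chi_i^{-1}}=\mathrm{ad}_{g_i}$ is precisely what is needed to push $\chi_ig_i$ past an arbitrary $f\in H^{*}$: using $g_i\chi_i=\chi_ig_i$, commuting $\chi_ig_i$ through $f$ reduces to $\sum\chi_i(c_{(1)})\,g_i^{-1}c_{(2)}g_i\,\chi_i^{-1}(c_{(3)})=c$ for all $c\in H$, which is the upgraded form of (i). As $H$ and $H^{*}$ generate $D(H)$, we conclude $\chi_ig_i\in Z(D(H))$, and the commuting elements $\chi_ig_i$ span a central abelian subgroup $G'$. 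I expect this step to be the main obstacle: hypothesis (i) is stated only as an action on $H$, and the point is to recognize that once upgraded to all of $H$ it is self-dual enough to also control the commutation with $H^{*}$.

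For the pairing, centrality together with group-likeness guarantees $\phi_l:=\Phi_{\mathcal{R}}^{-1}(l)\in G(D(H)^{*})$ is a character for each $l\in G'$, just as in the proof of Lemma~\ref{lem3.0.0.x}. Since $\phi_l$ and $\phi_s$ are algebra maps, applying $\phi_s$ to the second leg of the defining relation $(\phi_l\otimes\Id)(\mathcal{R}_{21}\mathcal{R})=l$ yields the monodromy expression $(\phi_l\otimes\phi_s)(\mathcal{R}_{21}\mathcal{R})=\phi_s(l)$; evaluating it on the standard $\mathcal{R}$-matrix $\mathcal{R}=\sum_k h_k\otimes h^{k}$ of $D(H)$ at $l=\chi_ig_i$ and $s=\chi_jg_j$ gives the (manifestly symmetric) scalar $(\Phi_{\mathcal{R}}^{-1}(\chi_ig_i))(\chi_jg_j)=\chi_i(g_j)\chi_j(g_i)=t_j^{m_{ij}}$. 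Hence, under the isomorphism $G'\cong G$, $\chi_ig_i\mapsto g_i$, the matrix $\bigl[(\Phi_{\mathcal{R}}^{-1}(l))(s)\bigr]_{l,s\in G'}$ becomes exactly $\bigl[\eta(g,h)\bigr]_{g,h\in G}$ for the bicharacter with $\eta(g_i,g_j)=t_j^{m_{ij}}$. Lemma~\ref{lem3.4} identifies the non-degeneracy of this matrix with the uniqueness of the solution of $(i_1,\dots,i_m)M=(0,\dots,0)$, namely hypothesis (ii); Lemma~\ref{lem3.0.0.x} then yields that $(D(H)/I,\overline{\mathcal{R}})$ is factorizable.
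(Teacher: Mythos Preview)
Your proposal is correct and follows essentially the same approach as the paper's proof: show $\chi_i g_i\in Z(D(H))$ from hypothesis~(i), compute the monodromy pairing $(\Phi_{\mathcal R}^{-1}(\chi_i g_i))(\chi_j g_j)=\chi_i(g_j)\chi_j(g_i)=t_j^{m_{ij}}$, invoke Lemma~\ref{lem3.4} to turn hypothesis~(ii) into non-degeneracy, and conclude via Lemma~\ref{lem3.0.0.x}. Your treatment of centrality---extending (i) from the generators to all of $H$ and then dualizing to obtain commutation with $H^{*}$---spells out exactly what the paper compresses into the single sentence ``Combining this with (i), we know $\chi_i g_i\in Z(D(H))$''.
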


\begin{proof}
By definition of $D(H)$, we know $\chi_i a_j \chi_i^{-1}=\chi_i\rightharpoonup a_j\leftharpoonup \chi_i^{-1}$. Combing this with (i), we know $\chi_i g_i\in Z(D(H))$. Let $B=\langle \chi_i g_i|\;1\leq i\leq m\rangle$ as groups. Since Lemma \ref{lem3.4} and $(\Phi_\mathcal{R}^{-1}(\chi_i g_i))(\chi_j g_j)=\chi_i(g_j)\chi_j(g_i)=t_j^{m_{ij}}$ for $1\leq i,j\leq m$, the matrix $[(\Phi_R^{-1}(g))(h)]_{g,h\in B}$ is non-degenerate. Now using Lemma \ref{lem3.0.0.x} and we get what we want.
\end{proof}

To obtain ribbon Hopf algebras, we give following propositions.

\begin{proposition}\label{lem3.r1}
Assume $(H,R)$ is a quasitriangular Hopf algebra satisfying:
\begin{itemize}
  \item[(i)] $G(H)\cap Z(H)=\{1\}$,
  \item[(ii)]  $|G(H)|=2m-1$ for some $m\in \mathbb{N}$,
  \item[(iii)] $S^{2n}=\Id$ for some odd number $n$,
\end{itemize}
then the element $\mathrm{g}^{-m}u$ is the unique ribbon element for $(H,R)$, where $\mathrm{g}=uS(u^{-1})$.
\end{proposition}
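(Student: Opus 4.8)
The plan is to reduce everything to the correspondence in Theorem \ref{thm2.r}. Since $l\mapsto ul$ is a bijection from $G(\text{R}(H,R))$ onto $\text{R}(H,R)$, it suffices to prove that $G(\text{R}(H,R))$ is the singleton $\{\mathrm{g}^{-m}\}$; then $u\mathrm{g}^{-m}$ is the unique ribbon element, and I will note that it coincides with $\mathrm{g}^{-m}u$ because $u$ commutes with $\mathrm{g}$, which follows from the standard fact that $uS(u)=S(u)u=c$ is central. I will also invoke at the outset the standard fact that $\mathrm{g}$ is a group-like element, so that $\mathrm{g}\in G(H)$ and its order divides $|G(H)|=2m-1$ by Lagrange's theorem; in particular $\mathrm{g}^{2m-1}=1$.

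First I would verify that $\mathrm{g}^{-m}$ lies in $G(\text{R}(H,R))$ by checking the three defining conditions. It is group-like as a power of $\mathrm{g}$. For the condition $(\mathrm{g}^{-m})^2=\mathrm{g}^{-1}$ I compute $\mathrm{g}^{-2m}=\mathrm{g}^{-(2m-1)}\mathrm{g}^{-1}=\mathrm{g}^{-1}$, using $\mathrm{g}^{2m-1}=1$. The remaining condition $S^2(h)=\mathrm{g}^{m}h\mathrm{g}^{-m}$ is the crux: here I would combine $S^4(h)=\mathrm{g}h\mathrm{g}^{-1}$ with a short order count. Writing $k$ for the order of $S^2$ as an operator, condition (iii) gives $k\mid n$, so $k$ is odd, and hence $S^4=(S^2)^2$ also has order $k$. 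Since $(S^4)^{2m-1}(h)=\mathrm{g}^{2m-1}h\mathrm{g}^{-(2m-1)}=h$, we get $k\mid 2m-1$, whence $(S^2)^{2m-1}=\Id$, i.e. $S^{4m}=S^2$. Therefore $\mathrm{g}^{m}h\mathrm{g}^{-m}=(S^4)^m(h)=S^{4m}(h)=S^2(h)$, as required, so $\mathrm{g}^{-m}\in G(\text{R}(H,R))$.

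For uniqueness I would argue directly from the defining relations of $G(\text{R}(H,R))$: if $l,l'$ both satisfy $S^2(h)=l^{-1}hl=l'^{-1}hl'$ for all $h\in H$, then $l'l^{-1}$ commutes with every $h$, so $l'l^{-1}\in Z(H)$; as it also lies in $G(H)$, condition (i) forces $l'l^{-1}=1$. Hence $G(\text{R}(H,R))$ has at most one element, and combined with the previous paragraph it equals $\{\mathrm{g}^{-m}\}$, giving the unique ribbon element $\mathrm{g}^{-m}u$. The main obstacle is the verification of the conjugation condition $S^2(h)=\mathrm{g}^m h\mathrm{g}^{-m}$: the delicate point is the number-theoretic interplay between the order of $\mathrm{g}$ (dividing the odd number $2m-1$) and the order of $S^2$ (dividing the odd number $n$), which is precisely what lets me collapse $S^{4m}$ to $S^2$, so the parity hypotheses in (ii) and (iii) are both essential.
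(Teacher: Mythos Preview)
Your proof is correct and follows essentially the same strategy as the paper: verify $\mathrm{g}^{-m}\in G(\text{R}(H,R))$ via Lagrange for the squaring condition and an odd-order argument combining (ii) and (iii) for the conjugation condition, then invoke Theorem~\ref{thm2.r} and use (i) for uniqueness. The only cosmetic difference is that the paper packages the conjugation step as ``$\sigma^2=(S^2)^2$ with both of odd order forces $\sigma=S^2$'', whereas you compute the order $k$ of $S^2$ and show $k\mid 2m-1$ directly; these are the same parity trick.
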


\begin{proof}
Recall that $\mathrm{g}=uS(u^{-1})$ and $S^4(h)=\mathrm{g} h \mathrm{g}^{-1}$ for $h\in H$. Define the map $\sigma:H\rightarrow H$ by $\sigma(h)=\mathrm{g}^{m}h\mathrm{g}^{-m}$. By (ii), we have $\mathrm{g}^{2m}=\mathrm{g}$ and hence $\sigma^2=(S^2)^2$. Using (ii)-(iii), we know $\sigma^{n(2m-1)}=(S^2)^{n(2m-1)}=\Id$. Note that $n(2m-1)$ is odd, we get $\sigma=S^2$. This implies that $\mathrm{g}^{m}\in G(\text{R}(H,R))$. Now applying Theorem \ref{thm2.r}, we obtain that $\mathrm{g}^{-m}u$ is a ribbon element. By (i), we know it is unique ribbon element $(H,R)$.
\end{proof}
In practice, we also need following result to obtain ribbon Hopf algebras.
\begin{proposition}\label{lem3.r2}
Assume $(H,R)$ is a quasitriangular Hopf algebra and $H$ is generated by $\{a_i|\;1\leq i\leq n\}$ as an algebra. If the following conditions hold
\begin{itemize}
  \item[(i)] $G(H)\cap Z(H)=\{1\}$,
  \item[(ii)]  $S^2(a_i)=\lambda_i a_i$ for all $1\leq i\leq n$, where $\lambda_i$ is root of unity with odd order,
\end{itemize}
then there is a unique ribbon element for $(H,R)$.
\end{proposition}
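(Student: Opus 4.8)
The plan is to compute the set $G(\text{R}(H,R))$ explicitly and show that it is a singleton; Theorem \ref{thm2.r} then immediately produces a unique ribbon element. Recall that $\mathrm{g}=uS(u^{-1})$ is a group-like element and that conjugation by $\mathrm{g}$ realizes $S^4$, i.e. $S^4(h)=\mathrm{g}h\mathrm{g}^{-1}$ for all $h\in H$. So the task reduces to finding all $l\in G(H)$ with $l^2=\mathrm{g}^{-1}$ and $l^{-1}hl=S^2(h)$ for every $h\in H$.

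First I would use (ii) together with the fact that $H$ is generated by the $a_i$ to control the order of $S^2$. Since $S^2$ is an algebra automorphism and $S^2(a_i)=\lambda_i a_i$ with each $\lambda_i$ a root of unity of odd order, taking $d$ to be the least common multiple of these orders (an odd number) gives $(S^2)^d(a_i)=\lambda_i^d a_i=a_i$ on generators, hence $(S^2)^d=\Id$. Thus $S^2$ has odd order; as this order is odd, $S^4=(S^2)^2$ shares it, and I write $e$ for this common order. Next I would identify $e$ with the order of $\mathrm{g}$: for any integer $j$, the group-like element $\mathrm{g}^j$ lies in $Z(H)$ precisely when conjugation by $\mathrm{g}^j$ is trivial, i.e. when $(S^4)^j=\Id$, i.e. when $e\mid j$; and by hypothesis (i) a central group-like element must equal $1$, so $\mathrm{g}^j=1$ iff $e\mid j$. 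Hence $\ord(\mathrm{g})=e$ is odd.

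With this in hand the construction of $l$ is forced. Because $e$ is odd, $2$ is invertible modulo $e$, so there is a unique $k$ modulo $e$ with $2k\equiv-1\pmod e$; I would set $l=\mathrm{g}^k\in G(H)$. Then $l^2=\mathrm{g}^{2k}=\mathrm{g}^{-1}$, while $l^{-1}hl=\mathrm{g}^{-k}h\mathrm{g}^{k}=(S^4)^{-k}(h)=(S^2)^{-2k}(h)=S^2(h)$, the final equality holding since $-2k\equiv1\pmod e$ and $\ord(S^2)=e$. Thus $l\in G(\text{R}(H,R))$, so this set is nonempty. For uniqueness, if $l'$ is another such element, then $l^{-1}hl=S^2(h)=l'^{-1}hl'$ forces $l'l^{-1}$ to be a central group-like element, hence trivial by (i), so $l'=l$. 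Therefore $G(\text{R}(H,R))$ is a singleton, and Theorem \ref{thm2.r} yields the unique ribbon element $ul$.

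The crux of the argument, and the step I expect to need the most care, is the coincidence $\ord(\mathrm{g})=\ord(S^2)$ (both odd): this is exactly what makes the single congruence $2k\equiv-1$ both solvable and simultaneously sufficient for the two defining conditions $l^2=\mathrm{g}^{-1}$ and $l^{-1}hl=S^2(h)$. Hypothesis (ii) supplies the oddness through the generators, while hypothesis (i) collapses the a priori distinct orders of $\mathrm{g}$ and $S^4$ into one; were the common order even, the congruence $2k\equiv-1$ could fail to have a solution and $G(\text{R}(H,R))$ could be empty, so the oddness is essential rather than cosmetic.
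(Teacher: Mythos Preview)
Your proof is correct and follows essentially the same strategy as the paper: exhibit a power of $\mathrm{g}$ lying in $G(\text{R}(H,R))$ and invoke hypothesis (i) for uniqueness, then apply Theorem~\ref{thm2.r}. The only cosmetic difference is organizational: you first establish $\ord(\mathrm{g})=\ord(S^2)=e$ is odd (using (i) early) and then solve $2k\equiv -1\pmod{e}$, whereas the paper picks $m$ with $\lambda_i^{2m-1}=1$, shows directly that $S^2$ is conjugation by $\mathrm{g}^m$, and only afterwards uses (i) to force $\mathrm{g}^{2m-1}=1$; both routes land on the same singleton $G(\text{R}(H,R))$.
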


\begin{proof}
By (ii), we can find some $m\in \mathbb{N}$ such that $\lambda_i^{2m-1}=1$ for all $1\leq i\leq n$. We claim that $\mathrm{g}^{-m}\in G(\text{R}(H,R))$. Using $S^4(a_i)=\mathrm{g} a_i\mathrm{g}^{-1}$ and $S^4(a_i)=\lambda_i^2 a_i$, we have $\mathrm{g}a_i\mathrm{g}^{-1}=\lambda_i^2 a_i$. This implies that $\mathrm{g}^ma_i\mathrm{g}^{-m}=\lambda_i a_i$. Hence $S^2(a_i)=\mathrm{g}^ma_i\mathrm{g}^{-m}$. Since $H$ is generated by $\{a_i|\;1\leq i\leq n\}$ as an algebra, $S^2(h)=\mathrm{g}^m h\mathrm{g}^{-m}$ for $h\in H$. Using this formula, we obtain $S^4(h)=(\mathrm{g}^{m})^2 h(\mathrm{g}^{-m})^2$. Because $S^4(h)=\mathrm{g} h \mathrm{g}^{-1}$ and the condition (i), we get $(\mathrm{g}^{m})^2=\mathrm{g}$. So the claim holds. Using Theorem \ref{thm2.r}, we know that $\mathrm{g}^{-m}u$ is a ribbon element. By (i), it is the unique ribbon element for $(H,R)$.
\end{proof}
Now we give following theorem to construct ribbon factorizable Hopf algebras. Assume $\{a_i|\;1\leq i\leq n\}\subseteq H$ and $H$ is generated by $\{a_i|\;1\leq i\leq n\}$ as an algebra. Then we have

\begin{theorem}\label{thm3.rf}
Let $r\in \mathbb{N}^+$. If the conditions (i)-(ii) of Corollary \ref{coro3.1} hold and
\begin{itemize}
 \item[(i)] $G(H^*)=\langle \chi_i|\;1\leq i\leq m\rangle$ as group;
  \item[(ii)] $G(H)\cap Z(H)=\{1\}$,
  \item[(ii)]  $S^2(a_i)=\lambda_i a_i$ for all $1\leq i\leq n$, where the order of $\lambda_i$ is $2r-1$,
\end{itemize}
then $(D(H)/I,\overline{\mathcal{R}})$ is factorizable Hopf algebra with unique ribbon element $\mathrm{g}^{-r}u$, where $I=D(H)\langle \chi_i g_i|\;1\leq i\leq m\rangle^{+}$ and $\mathcal{R}$ is the standard universal $\mathcal{R}$-matrix of $D(H)$.
\end{theorem}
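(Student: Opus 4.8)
The plan is to decouple the two assertions: factorizability comes essentially for free, while the ribbon element carries all the content. Write $K=D(H)/I$ and let $\overline{\mathcal{R}}$ be the induced $\mathcal{R}$-matrix. Since the hypotheses (i)--(ii) of Corollary \ref{coro3.1} are assumed, that corollary applies verbatim and yields both that $(K,\overline{\mathcal{R}})$ is factorizable and that $\langle \chi_i g_i\rangle\subseteq Z(D(H))$. So the only remaining task is the ribbon element, and I would obtain it by applying Proposition \ref{lem3.r2} to the quasitriangular Hopf algebra $(K,\overline{\mathcal{R}})$. This reduces everything to verifying the two hypotheses of that proposition \emph{for} $K$: that $K$ is generated as an algebra by $S^2$-eigenvectors whose eigenvalues are roots of unity of odd order, and that $G(K)\cap Z(K)=\{1\}$.

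For the eigenvector condition I would exploit hypothesis (iii) together with the structure of the double. Because $S^2$ is an algebra automorphism and $S^2(a_i)=\lambda_i a_i$, every monomial in the $a_i$ is an $S^2_H$-eigenvector with eigenvalue a product of the $\lambda_i$; as each $\lambda_i$ has order $2r-1$, all these eigenvalues satisfy $\zeta^{2r-1}=1$, so $S^2_H$ is diagonalizable with eigenvalues of order dividing $2r-1$. Now $S_{H^\ast}=(S_H)^\ast$, hence $S^2_{H^\ast}=(S^2_H)^\ast$ has the same spectrum; and since $H,H^{\ast cop}\hookrightarrow D(H)$ are Hopf subalgebras with $S^2_{D(H)}|_H=S^2_H$ and $S^2_{D(H)}|_{H^\ast}=S_{H^\ast}^{-2}$, the map $S^2_{D(H)}$ is diagonalizable on each tensor factor with $(2r-1)$-th-root-of-unity eigenvalues. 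As $S^2_{D(H)}$ is multiplicative, the products $f\otimes h$ of eigenvectors form an eigenbasis of $D(H)$, and this passes to the quotient: $S^2_K$ is diagonalizable with every eigenvalue of order dividing $2r-1$. Taking the eigenbasis as the generating set gives exactly the hypothesis of Proposition \ref{lem3.r2}, and because all eigenvalues are killed by $2r-1$ one may take the exponent $m=r$ there, which is what forces the ribbon element to be precisely $\mathrm{g}^{-r}u$. Concretely, on an eigenvector $y$ with eigenvalue $\zeta$ one has $\mathrm{g}\,y\,\mathrm{g}^{-1}=S^4(y)=\zeta^2 y$, so $\mathrm{g}^{r}y\mathrm{g}^{-r}=\zeta^{2r}y=\zeta y=S^2(y)$; thus $S^2_K$ equals conjugation by $\mathrm{g}^r$ on generators, hence everywhere.

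The remaining, and I expect hardest, step is $G(K)\cap Z(K)=\{1\}$, which is needed both for the identity $(\mathrm{g}^{r})^2=\mathrm{g}$ inside Proposition \ref{lem3.r2} and for uniqueness; this is where hypotheses (i) and (ii) should enter. I would first pin down the group-likes: since $G(H^\ast)=\langle\chi_i\rangle$, the group-likes of the double are $G(D(H))=\langle\chi_i\rangle\times G(H)$ (these commute because $g\rightharpoonup\chi\leftharpoonup g^{-1}=\chi$ for a character $\chi$), and the relation $\chi_i g_i=1$ imposed in $K$ rewrites each $\chi_i$ as $g_i^{-1}$, so the image of $G(D(H))$ in $K$ is exactly the image of $G(H)$. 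Granting that $H\hookrightarrow K$, so that $I\cap H=0$, and that $K$ acquires no group-likes beyond this image, any $z\in G(K)\cap Z(K)$ is $\overline{g}$ for some $g\in G(H)$; commuting with every $\overline{a_i}$ then forces $ga_i=a_ig$ in $H$, whence $g\in G(H)\cap Z(H)=\{1\}$ by hypothesis (ii). The delicate points here are precisely the embedding $H\hookrightarrow K$ and the absence of new group-likes in the quotient; both should follow from a PBW-type basis and a dimension count for $K=D(H)/I$, and this is the portion of the argument I expect to require the most care.

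With $G(K)\cap Z(K)=\{1\}$ established, the conclusion is immediate. From $S^2_K(y)=\mathrm{g}^{r}y\mathrm{g}^{-r}$ for all $y$ and the general identity $S^4_K(y)=\mathrm{g}\,y\,\mathrm{g}^{-1}$ one gets $\mathrm{g}^{2r-1}\in G(K)\cap Z(K)=\{1\}$, so $(\mathrm{g}^{-r})^2=\mathrm{g}^{-1}$ and $S^2_K(y)=(\mathrm{g}^{-r})^{-1}y\,\mathrm{g}^{-r}$; hence $\mathrm{g}^{-r}\in G(\mathrm{R}(K,\overline{\mathcal{R}}))$, and by the bijection of Theorem \ref{thm2.r} the element $\mathrm{g}^{-r}u$ is a ribbon element. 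Any two ribbon elements differ by an element of $G(K)\cap Z(K)=\{1\}$, giving uniqueness and completing the proof.
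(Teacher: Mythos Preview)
Your plan matches the paper's proof: Corollary \ref{coro3.1} gives factorizability, then one shows $G(K)\cap Z(K)=\{1\}$ and that $S_K^2$ is conjugation by $\mathrm{g}^r$, and concludes via Theorem \ref{thm2.r}. The paper's only variation is cosmetic: rather than diagonalizing $S_K^2$ on an eigenbasis of $D(H)$, it uses $\mathrm{g}\in G(K)=G(H)\subseteq H$ to get $S_H^2(h)=\mathrm{g}^r h\,\mathrm{g}^{-r}$ on $H$, dualizes to obtain $S_{H^*}^{-2}(f)=\mathrm{g}^r\rightharpoonup f\leftharpoonup\mathrm{g}^{-r}$, hence $S_{D(H)}^2=$ conjugation by $\mathrm{g}^r$ on all of $D(H)$, and then descends to $K$. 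The steps you flag as delicate---that $H\hookrightarrow K$ and that $G(K)=G(H)$---are simply asserted in the paper (``by definition of $K$, we have $G(K)=G(H)$'' and ``$H\subseteq K$ as Hopf algebras'') without the PBW or dimension argument you anticipate, so you are not missing any ingredient the paper actually supplies.
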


\begin{proof}
By Corollary \ref{coro3.1}, we only need to show that $\mathrm{g}^{-r}u$ is the unique ribbon element for $(D(H)/I,\overline{\mathcal{R}})$. Denote $D(H)/I$ as $K$ for simplicity. By definition of $K$, we have $G(K)=G(H)$. Note that $H\subseteq K$ as Hopf algebras, so $G(K)\cap Z(K)\subseteq G(H)\cap Z(H)$. Combining this with (ii), we get $G(K)\cap Z(K)=\{1\}$. Due to the proof of Lemma \ref{lem3.r2}, we know $(\mathrm{g}^{r})^2=\mathrm{g}$ and $(S_H)^2(h)=\mathrm{g}^r h\mathrm{g}^{-r}$ for $h\in H$. This implies that $(S_{H^*})^{-2}(f)=\mathrm{g}^{r} \rightharpoonup f\leftharpoonup \mathrm{g}^{-r}$ for $f\in H^*$. Thus $(S_{D(H)})^2(x)=\mathrm{g}^r x\mathrm{g}^{-r}$ for $x\in D(H)$. Since $K=D(H)/I$, we get that $(S_K)^2(k)=\mathrm{g}^r k\mathrm{g}^{-r}$ for $k\in K$. Hence $\mathrm{g}^{-r}\in G(R(K,\overline{\mathcal{R}}))$. Using Theorem \ref{thm2.r}, we know that $\mathrm{g}^{-r}u$ is a ribbon element. Due to (ii), the ribbon element is unique.
\end{proof}

Another way we will use to construct ribbon factorizable Hopf algebras is given as follows.
\begin{theorem}\label{thm3.4}
If the conditions (i)-(ii) of Corollary \ref{coro3.1} hold and
\begin{itemize}
 \item[(i)] $G(H^*)=\langle \chi_i|\;1\leq i\leq m\rangle$ as group;
  \item[(ii)] $G(H)\cap Z(H)=\{1\}$,
  \item[(ii)]  There is $g_0\in G(H)$ such that $S^2(x)=g_0x{g_0}^{-1}$ for all $x\in H$,
\end{itemize}
then $(D(H)/I,\overline{\mathcal{R}})$ is factorizable Hopf algebra with unique ribbon element $g_0^{-1}u$, where $I=D(H)\langle \chi_i g_i|\;1\leq i\leq m\rangle^{+}$ and $\mathcal{R}$ is the standard universal $\mathcal{R}$-matrix of $D(H)$.
\end{theorem}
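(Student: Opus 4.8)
The plan is to mirror the proof of Theorem \ref{thm3.rf}, simply replacing the element $\mathrm{g}^{r}$ manufactured there from the eigenvalue hypothesis $S^2(a_i)=\lambda_i a_i$ by the grouplike $g_0$ that is now handed to us directly. Since conditions (i)--(ii) of Corollary \ref{coro3.1} are assumed, factorizability of $(D(H)/I,\overline{\mathcal{R}})$ is immediate from that corollary. Writing $K=D(H)/I$, it remains only to show that $g_0^{-1}u$ is the unique ribbon element of $(K,\overline{\mathcal{R}})$. By Theorem \ref{thm2.r} this reduces to checking $g_0^{-1}\in G(\mathrm{R}(K,\overline{\mathcal{R}}))$, i.e. that $g_0^{-2}=\mathrm{g}^{-1}$ in $K$ and that $S_K^2(k)=g_0 k g_0^{-1}$ for all $k$, followed by a uniqueness argument.

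First I would pin down $G(K)$. One has $G(D(H))=G(H^*)\times G(H)$, and passing to $K$ imposes $\chi_i=g_i^{-1}$; since the hypothesis $G(H^*)=\langle\chi_i\mid 1\le i\le m\rangle$ identifies every grouplike of $H^*$ with one coming from $H$, this gives $G(K)=G(H)$. As $H\subseteq K$ is a Hopf subalgebra, the hypothesis $G(H)\cap Z(H)=\{1\}$ then yields $G(K)\cap Z(K)\subseteq G(H)\cap Z(H)=\{1\}$, which will later force both $\mathrm{g}=g_0^2$ and uniqueness.

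Next I would propagate the relation $S^2(x)=g_0 x g_0^{-1}$ from $H$ to all of $K$. It is given on $H$; dualizing via $\langle S_{H^*}^2(f),h\rangle=\langle f,S_H^2(h)\rangle=\langle f,g_0 h g_0^{-1}\rangle$, together with the convention $\langle a\rightharpoonup f\leftharpoonup b,c\rangle=\langle f,bca\rangle$, gives $S_{H^*}^{2}(f)=g_0^{-1}\rightharpoonup f\leftharpoonup g_0$, hence $S_{H^*}^{-2}(f)=g_0\rightharpoonup f\leftharpoonup g_0^{-1}$. A short computation with the multiplication rule of $D(H)$ shows $g_0 f g_0^{-1}=g_0\rightharpoonup f\leftharpoonup g_0^{-1}$ for $f\in H^*$, and since $S_{D(H)}^2$ restricts to $S_{H^*}^{-2}$ on the $H^{*cop}$-factor, the two agree there. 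Because $D(H)$ is generated by $H$ and $H^*$ and both $S_{D(H)}^2$ and conjugation by $g_0$ are algebra automorphisms agreeing on generators, we obtain $S_{D(H)}^2(x)=g_0 x g_0^{-1}$ on $D(H)$, hence $S_K^2(k)=g_0 k g_0^{-1}$ on $K$. Finally I would identify $\mathrm{g}$: from $S_K^4(k)=g_0^2 k g_0^{-2}$ compared with $S_K^4(k)=\mathrm{g}\,k\,\mathrm{g}^{-1}$ it follows that $g_0^{-2}\mathrm{g}$ is central, and since $\mathrm{g}$ is grouplike we get $g_0^{-2}\mathrm{g}\in G(K)\cap Z(K)=\{1\}$, so $\mathrm{g}=g_0^{2}$. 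Thus $g_0^{-1}\in G(\mathrm{R}(K,\overline{\mathcal{R}}))$, and Theorem \ref{thm2.r} gives the ribbon element $u g_0^{-1}$; since $u$ and $g_0$ both implement $S^2$, the element $g_0^{-1}u$ is central, whence $u$ commutes with $g_0$ and $u g_0^{-1}=g_0^{-1}u$, matching the claim, while any two elements of $G(\mathrm{R}(K,\overline{\mathcal{R}}))$ differ by a central grouplike and so coincide.

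The step I expect to be the main obstacle is the middle one, namely transporting $S^2=\mathrm{ad}_{g_0}$ across the duality into $D(H)$: one must track carefully the conventions $S_{H^{*cop}}=S_{H^*}^{-1}$ and $\langle a\rightharpoonup f\leftharpoonup b,c\rangle=\langle f,bca\rangle$ so that the explicit $H^*$-computation of $g_0 f g_0^{-1}$ lines up precisely with $S_{D(H)}^2$. Once that is in place, the identifications $G(K)=G(H)$ and $\mathrm{g}=g_0^2$ are forced by the centrality hypothesis $G(H)\cap Z(H)=\{1\}$, and the rest is routine.
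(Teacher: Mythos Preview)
Your proposal is correct and follows essentially the same route as the paper's proof. The paper compresses the middle portion into a single line (``Using similar discussion with the proof of Theorem~\ref{thm3.rf}, we have $G(K)\cap Z(K)=\{1\}$ and $(S_{D(H)})^2(x)=g_0 x g_0^{-1}$''), whereas you spell out both the identification $G(K)=G(H)$ and the transport of $S^2=\mathrm{ad}_{g_0}$ through the dual to all of $D(H)$; your extra observation that $u g_0^{-1}=g_0^{-1}u$ is a harmless addendum the paper omits.
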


\begin{proof}
Denote $D(H)/I$ as $K$. Using similar discussion with the proof of Theorem \ref{thm3.rf}, we have $G(K)\cap Z(K)=\{1\}$ and $(S_{D(H)})^2(x)=g_0x{g_0}^{-1}$ for $x\in D(H)$. Hence $(S_{K})^2(x)=g_0x{g_0}^{-1}$ for $x\in K$. Since $(K,\overline{\mathcal{R}})$ is a quasitriangular, $S^4(k)=\mathrm{g} k \mathrm{g}^{-1}$ for $k\in K$. Because $G(K)\cap Z(K)=\{1\}$ and $S^4(k)=g_0^2 k g_0^{-2}$ for $k\in K$, so $\mathrm{g}=g_0^2$. Now using Theorem \ref{thm2.r}, we know that $g_0^{-1}u$ is a ribbon element. Due to $G(K)\cap Z(K)=\{1\}$, it is unique.
\end{proof}

\section{Some applications}
\subsection{Ribbon factorizable Hopf algebras $D(u(\mathcal{D},\lambda,\mu))/I$.}
In this subsection, we will apply Theorem \ref{thm3.rf} to $u(\mathcal{D},\lambda,\mu)$ and hence obtain some ribbon factorizable Hopf algebras. Given a datum $\mathcal{D}=\mathcal{D}(G, (g_i)_{1\leq i\leq \theta}, (\chi_i)_{1\leq i\leq \theta}, (a_{ij})_{1\leq i,j\leq \theta})$ and parameters  $\lambda,\mu $ for $\mathcal{D}$. Then we have

\begin{lemma}\label{thm4.g}
Suppose $\{\overline{\chi_i}|\;1\leq i\leq \theta\}\subseteq \widehat{G}$ satisfying $\overline{\chi_i}(g_j)=\chi_j(g_i)$ for $1\leq i,j\leq \theta$. Assume $1\leq i\leq \theta$. Then there is a unique element $\overline{\chi_i}'\in G(u(\mathcal{D},\lambda,\mu)^*)$ satisfying $\overline{\chi_i}'(g)=\overline{\chi_i}(g)$ for $g\in G$.
\end{lemma}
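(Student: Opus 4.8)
The plan is to identify $G(u(\mathcal{D},\lambda,\mu)^*)$ with the set of unital algebra homomorphisms (characters) $u(\mathcal{D},\lambda,\mu)\to\Bbbk$: for any Hopf algebra $H$, an element $f\in H^*$ is group-like precisely when $f(ab)=f(a)f(b)$ and $f(1)=1$, since the coproduct of $H^*$ is dual to the product of $H$. Thus the statement asks for a unique character of the algebra $u(\mathcal{D},\lambda,\mu)$ restricting to the given group character $\overline{\chi_i}$ on $G$. First I would pin down the only possible values on generators. On $G$ the map must equal $\overline{\chi_i}$. On $x_j$, apply a would-be character $f$ to relation \eqref{eq2.1.x} with $g=g_j$: since $\Bbbk$ is commutative this gives $f(x_j)=\chi_j(g_j)f(x_j)=q_{jj}f(x_j)$, and $q_{jj}\neq1$ by \eqref{eq2.c}, forcing $f(x_j)=0$. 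As $G$ and the $x_j$ generate $u(\mathcal{D},\lambda,\mu)$ as an algebra, this already proves uniqueness and isolates the only candidate, namely $\overline{\chi_i}'(g)=\overline{\chi_i}(g)$ and $\overline{\chi_i}'(x_j)=0$.

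For existence I would define $\hat f$ on the free algebra generated by $G$ and $x_1,\dots,x_\theta$ by $\hat f|_G=\overline{\chi_i}$ (a group character, so the relations of $G$ are respected) and $\hat f(x_j)=0$, and then verify that $\hat f$ annihilates every defining relation \eqref{eq2.1.x}--\eqref{eq2.4.x}, so that it descends to the desired $\overline{\chi_i}'$. Relation \eqref{eq2.1.x} is immediate, and in \eqref{eq2.2.x} every monomial of $\text{ad}_{c}(x_i)^{1-a_{ij}}(x_j)$ has $x$-degree $2-a_{ij}\geq 1$, hence contains at least one factor $x_k$ and is killed by $\hat f$. The only substantive checks are \eqref{eq2.3.x} and \eqref{eq2.4.x}, and this is exactly where the hypothesis $\overline{\chi_i}(g_j)=\chi_j(g_i)$ enters. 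For \eqref{eq2.3.x} the braided-commutator term again has positive $x$-degree and dies, so it remains to see $\hat f(\lambda_{kl}(1-g_kg_l))=0$ for $k<l$, $k\not\sim l$. If $\lambda_{kl}=0$ this is clear; if $\lambda_{kl}\neq 0$ the condition imposed on $\lambda$ gives $\chi_k\chi_l=1$, whence $\hat f(g_kg_l)=\overline{\chi_i}(g_k)\overline{\chi_i}(g_l)=\chi_k(g_i)\chi_l(g_i)=(\chi_k\chi_l)(g_i)=1$ and the term vanishes.

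For \eqref{eq2.4.x} the power $x_\alpha^{N_J}$ has positive $x$-degree and is killed, so I must show $\hat f(u_\alpha(\mu))=0$. Here I would use that in $U(\mathcal{D},\lambda)$ the root-vector power $x_\alpha^{N_J}$ is $(g_\alpha^{N_J},1)$-skew-primitive; since $u_\alpha(\mu)\in\Bbbk G$ represents it after passing to the quotient, $u_\alpha(\mu)$ is a $(g_\alpha^{N_J},1)$-skew-primitive element of the group algebra, and such elements form the line $\Bbbk(1-g_\alpha^{N_J})$. Writing $u_\alpha(\mu)=c_\alpha(1-g_\alpha^{N_J})$ and using $\overline{\chi_i}(g_\alpha)=\prod_k\overline{\chi_i}(g_k)^{n_k}=\prod_k\chi_k(g_i)^{n_k}=\chi_\alpha(g_i)$, I obtain $\hat f(u_\alpha(\mu))=c_\alpha(1-\chi_\alpha^{N_J}(g_i))$, which vanishes once $c_\alpha\neq 0$ forces $\chi_\alpha^{N_J}=1$.

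The main obstacle is thus this last vanishing, i.e.\ controlling $u_\alpha(\mu)$ for non-simple $\alpha$, where it is a product expression in the $\mu_\beta$ rather than the explicit $\mu_{\alpha_k}(1-g_k^{N_k})$ of the simple case. To handle it cleanly I would exploit that conjugation by $G$ is an inner action by algebra automorphisms of $u(\mathcal{D},\lambda,\mu)$, hence induces a $\widehat{G}$-grading in which $g\cdot x\cdot g^{-1}=\chi(g)x$ places $x_\alpha^{N_J}$ in degree $\chi_\alpha^{N_J}$ and every element of $\Bbbk G$ in the trivial degree. The relation $x_\alpha^{N_J}=u_\alpha(\mu)$ is then homogeneous, so if $\chi_\alpha^{N_J}\neq 1$ both sides must vanish in the quotient; since $\Bbbk G$ embeds into $u(\mathcal{D},\lambda,\mu)$ (it is spanned by the $i_k=0$ part of the basis in Theorem \ref{thm2.3}), this forces $c_\alpha(1-g_\alpha^{N_J})=0$, hence $c_\alpha=0$ unless $g_\alpha^{N_J}=1$; in either remaining case $\hat f(u_\alpha(\mu))=0$. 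Assembling the four checks shows $\hat f$ descends to $\overline{\chi_i}'$, completing existence, which together with the first paragraph yields the claimed unique group-like element.
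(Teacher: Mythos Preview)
Your overall architecture matches the paper's: identify $G(u(\mathcal{D},\lambda,\mu)^*)$ with algebra characters, force $f(x_j)=0$ from the conjugation relation $g_jx_jg_j^{-1}=q_{jj}x_j$ and $q_{jj}\neq1$ (this is exactly how the paper proves uniqueness), and then check the defining relations \eqref{eq2.1.x}--\eqref{eq2.4.x} to get existence. Your treatment of \eqref{eq2.1.x}--\eqref{eq2.3.x} coincides with the paper's.

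The divergence is in \eqref{eq2.4.x} for non-simple $\alpha$. The paper argues: for simple $\alpha_k$ one has the explicit $u_{\alpha_k}(\mu)=\mu_{\alpha_k}(1-g_k^{N_k})$, and $\mu_{\alpha_k}\neq0$ forces $\chi_k^{N_k}=1$, so $\overline{\chi_i}(g_k^{N_k})=\chi_k^{N_k}(g_i)=1$; then, since both $x_\alpha$ and $u_\alpha(\mu)$ are built \emph{iteratively} from the simple-root data, the general relation is preserved. You instead claim that $X_\alpha^{N_J}$ is $(g_\alpha^{N_J},1)$-skew-primitive already in $U(\mathcal{D},\lambda)$, hence $u_\alpha(\mu)=c_\alpha(1-g_\alpha^{N_J})$ for a single scalar $c_\alpha$.

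That step is the gap. The paper only records that $u_\alpha(\mu)$ lies in the augmentation ideal of $\Bbbk[g_k^{N_k}\mid 1\le k\le\theta]$, which is genuinely larger than $\Bbbk(1-g_\alpha^{N_J})$; and for non-simple $\alpha$ the powers $X_\alpha^{N_J}$ are in general skew-primitive only \emph{modulo} the ideal generated by lower root-vector powers, so after imposing those lower relations $u_\alpha(\mu)$ may acquire contributions supported on several group-likes. Your $\widehat{G}$-grading argument is correct and cleanly disposes of the case $\chi_\alpha^{N_J}\neq1$ (it forces $u_\alpha(\mu)=0$ outright, independent of any assumed form), but in the remaining case $\chi_\alpha^{N_J}=1$ you still invoke the one-term shape to conclude $\hat f(u_\alpha(\mu))=c_\alpha\bigl(1-\chi_\alpha^{N_J}(g_i)\bigr)=0$. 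To close this you would need either an independent proof that $X_\alpha^{N_J}$ is honestly skew-primitive in $U(\mathcal{D},\lambda)$, or a finer description of which $g_\beta^{N_\beta}$ actually occur in $u_\alpha(\mu)$---and tracking that is precisely the inductive route the paper takes.
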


\begin{proof}
Let $1\leq i\leq \theta$. Define $\overline{\chi_i}'\in G(u(\mathcal{D},\lambda,\mu)^*)$ by $\overline{\chi_i}'(g)=\overline{\chi_i}(g)$ and $\overline{\chi_i}'(x_j)=0$ for $g\in G,\;1\leq j\leq \theta$. Next, we show $\overline{\chi_i}'$ is well defined.

Directly, $\overline{\chi_i}'$ keeps the relations \eqref{eq2.1.x}-\eqref{eq2.2.x}. Assume $\lambda_{kl}\neq 0$. By assumption about $\lambda$, we have $\chi_k\chi_l=1$. So $\chi_k\chi_l(g_i)=1$ for $1\leq k\leq \theta$. This implies that $\overline{\chi_i}(g_kg_l)=1$. Hence $\overline{\chi_i}'$ keeps the relation \eqref{eq2.3.x}. Let $\alpha_k$ be a simple root of $\Phi$ and suppose $\mu_{\alpha_k}\neq 0$. By assumption about $\mu$, we have $\chi_k^{N_k}=1$. Since $\overline{\chi_i}(g_k)^{N_k}=\chi_k^{N_k}(g_i)=1$, $\overline{\chi_i}$ keep the relation \eqref{eq2.4.x} for $\alpha=\alpha_k$. Note that $x_\alpha$ and $u_\alpha(\mu)$ are iterated defined by $x_{\alpha_1},...x_{\alpha_\theta}$ and $\mu_{\alpha_1},...\mu_{\alpha_\theta}$ for $\alpha\in \Phi^+$, hence $\overline{\chi_i}'$ keeps the relation \eqref{eq2.4.x} for all $\alpha\in \Phi^+$. Since $\overline{\chi_i}'$ keep the relation $g_kx_kg_k^{-1}=q_{kk}x_k$, we know $\overline{\chi_i}'(x_k)=0$. Hence $\overline{\chi_i}'$ is the only element in $G(u(\mathcal{D},\lambda,\mu)^*)$ satisfying $\overline{\chi_i}'(g)=\overline{\chi_i}(g)$ for $g\in G$.
\end{proof}

\begin{remark}
\emph{Suppose $\{\overline{\chi_i}|\;1\leq i\leq \theta\}\subseteq \widehat{G}$ satisfying $\overline{\chi_i}(g_j)=\chi_j(g_i)$ for $1\leq i,j\leq \theta$. Since above lemma, we will denote the $\overline{\chi_i}'$ as $\overline{\chi_i}$ with no confusion.}
\end{remark}

Denote the Cartan matrix $(a_{ij})_{1\leq i,j\leq \theta}$ as $A$. Then we have
\begin{theorem}\label{thm4.2}
Let $H=u(\mathcal{D},\lambda,\mu)$. Suppose $gcd(\ord(\chi_ig_i),\det(A))=1$ for all $1\leq i\leq \theta$ and there is $\{\overline{\chi_i}|\;1\leq i\leq \theta\}\subseteq \widehat{G}$ satisfying $\overline{\chi_i}(g_j)=\chi_j(g_i)$ for $1\leq i,j\leq \theta$. Then
\begin{itemize}
 \item[(i)] $\{\overline{\chi_i} g_i|\;1\leq i\leq \theta\}\subseteq Z(D(H))$;
  \item[(ii)] $(D(H)/I,\overline{\mathcal{R}})$ is a factorizable Hopf algebra, where $I=D(H)\langle \overline{\chi_i} g_i|\;1\leq i\leq \theta\rangle^{+}$;
\end{itemize}
Moreover, if $\widehat{G}=\langle \chi_i|\;1\leq i\leq \theta \rangle$ and $G(H^*)=\langle \overline{\chi_i}|\;1\leq i\leq \theta \rangle$ as groups, then $(D(H)/I,\overline{\mathcal{R}})$ is ribbon factorizable Hopf algebra with unique ribbon element.
\end{theorem}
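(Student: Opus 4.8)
The plan is to read both claims off the general machinery of Section~\ref{sec2.2}, taking $H=u(\mathcal{D},\lambda,\mu)$ as the input Hopf algebra, the subgroup $\langle g_1,\dots,g_\theta\rangle\subseteq G(H)$ together with the characters $\overline{\chi_1},\dots,\overline{\chi_\theta}\in G(H^*)$ furnished by Lemma~\ref{thm4.g} as the distinguished group-likes, and $G\cup\{x_1,\dots,x_\theta\}$ as the algebra generators $\{a_j\}$. Parts (i) and (ii) will come from Corollary~\ref{coro3.1}, so first I verify its two hypotheses. Condition (i) is a direct computation of the $H^*$-actions on generators: since $\overline{\chi_i}$ is an algebra map vanishing on each $x_j$, one finds $\overline{\chi_i}\rightharpoonup x_j\leftharpoonup\overline{\chi_i}^{-1}=\overline{\chi_i}(g_j)^{-1}x_j=\chi_j(g_i)^{-1}x_j$, whereas relation \eqref{eq2.1.x} gives $g_i^{-1}x_jg_i=\chi_j(g_i)^{-1}x_j$; on the group elements both sides are trivial. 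This verifies condition (i) and already establishes part (i), that each $\overline{\chi_i}g_i$ is central in $D(H)$.

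For condition (ii) of Corollary~\ref{coro3.1} I compute the relevant bicharacter entry
\[
\bigl(\Phi_{\mathcal{R}}^{-1}(\overline{\chi_i}g_i)\bigr)(\overline{\chi_j}g_j)=\overline{\chi_i}(g_j)\,\overline{\chi_j}(g_i)=\chi_j(g_i)\chi_i(g_j)=q_{ij}q_{ji}=q_{ii}^{a_{ij}},
\]
where the last equality is \eqref{eq2.c}. Thus the matrix controlling factorizability is $[q_{ii}^{a_{ij}}]_{i,j}$, and by Lemma~\ref{lem3.4} its non-degeneracy amounts to the triviality of the solution set of the associated linear system over $\prod_i\mathbb{Z}_{\ord(\overline{\chi_i}g_i)}$. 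Since $N_i$ is constant on each connected component $J$ of the Dynkin diagram and $A$ is block-diagonal along these components, I write $q_{ii}$ as a power of a fixed primitive $N_J$-th root of unity on each block; the mixed relations $\prod_i q_{ii}^{k_ia_{ij}}=1$ then collapse, block by block, to a congruence of the form $(k)A_J\equiv 0$ modulo the orders of the $\overline{\chi_i}g_i$. As $\det A=\prod_J\det A_J$ and $\gcd(\ord(\chi_ig_i),\det A)=1$, each $A_J$ is invertible modulo these orders, so only the trivial solution survives; Corollary~\ref{coro3.1} then yields part (ii), that $(D(H)/I,\overline{\mathcal{R}})$ is factorizable.

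For the ``moreover'' statement I apply Theorem~\ref{thm3.rf}, whose first two hypotheses are conditions (i)--(ii) of Corollary~\ref{coro3.1}, already verified. Its remaining hypotheses are checked as follows: $G(H^*)=\langle\overline{\chi_i}\rangle$ is assumed; for $G(H)\cap Z(H)=\{1\}$ note that $G(H)=G$ and a group element $g$ is central exactly when $\chi_j(g)=1$ for all $j$, which forces $g=1$ because $\widehat{G}=\langle\chi_i\rangle$ separates the points of $G$; finally $S(x_j)=-g_j^{-1}x_j$ gives $S^2(x_j)=g_j^{-1}x_jg_j=q_{jj}^{-1}x_j$ and $S^2(g)=g$, so $S^2$ acts on each generator by a scalar whose order ($N_j$, or $1$) is odd by the standing hypothesis that $\ord(q_{ii})$ is odd. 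Taking $2r-1$ to be a common odd multiple of the $N_j$ then lets Theorem~\ref{thm3.rf} (via Theorem~\ref{thm2.r}) deliver the unique ribbon element $\mathrm{g}^{-r}u$.

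The main obstacle is the non-degeneracy step in part (ii). Unlike the uniform situation of Lemma~\ref{lem3.4}, the matrix $[q_{ii}^{a_{ij}}]$ has entries built from distinct bases $q_{ii}$, and it must be analysed over a product of cyclic groups whose orders $\ord(\overline{\chi_i}g_i)$ need not equal $N_i$ (one has at least $N_i\mid\ord(\overline{\chi_i}g_i)$, since the diagonal value $q_{ii}^2=\eta(\overline{\chi_i}g_i,\overline{\chi_i}g_i)$ has order $N_i$). The delicate points are therefore (a) to identify $\langle\overline{\chi_i}g_i\rangle$ with the correct product $\prod_i\mathbb{Z}_{\ord(\overline{\chi_i}g_i)}$ and to ensure the bicharacter has no radical, and (b) to pass cleanly from the multiplicative, mixed-base relations $\prod_i q_{ii}^{k_ia_{ij}}=1$ to a determinant condition on the Cartan matrix; it is exactly at this passage that the hypothesis $\gcd(\ord(\chi_ig_i),\det A)=1$ is consumed.
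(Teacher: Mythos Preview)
Your treatment of part (i) and of the ``moreover'' clause is the paper's argument. The gap you flag in part (ii) is genuine as you have written it, but it disappears once you read the Cartan relation \eqref{eq2.c} with the indices exchanged: one equally has $q_{ij}q_{ji}=q_{jj}^{a_{ji}}$, so for \emph{fixed} $j$ the quantity
\[
\Phi_{\mathcal R}^{-1}(s)(\overline{\chi_j}g_j)=\prod_{i}(q_{ij}q_{ji})^{k_i}=q_{jj}^{\sum_i a_{ji}k_i}
\]
is a power of the single root $q_{jj}$, and your ``mixed-base'' obstruction never appears. Setting this equal to $1$ for every $j$ gives the linear system $\sum_i a_{ji}k_i\equiv 0$, which the paper then kills in one stroke via the adjugate identity $A^{*}A=\det(A)I$: multiplying through yields $\det(A)\,k_i\equiv 0$, and the hypothesis $\gcd(\ord(\chi_ig_i),\det A)=1$ forces each $k_i$ to vanish modulo $n_i$. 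This single observation replaces your block-by-block reduction, the change of primitive root, and the passage you describe as delicate; no separate identification of $\langle\overline{\chi_i}g_i\rangle$ with a direct product is needed beyond what Lemma~\ref{lem3.0.0.x} already requires.
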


\begin{proof}
Directly, we have $\overline{\chi_i}\rightharpoonup x_j\leftharpoonup \overline{\chi_i}^{-1}=g_i^{-1}x_j g_i=q_{ij}^{-1}x_j$. Thus we have (i). Let $B=\langle\overline{\chi_i} g_i|\;1\leq i\leq \theta\rangle$ as groups. To show (ii), we only need to prove that the matrix $[(\Phi_R^{-1}(g))(h)]_{g,h\in B}$ is non-degenerate, where $\Phi_R:D(H)^*\rightarrow D(H)$ is given in Lemma \ref{lem3.0.0.x}. Let $n_i=\ord(\chi_ig_i)$ for $1\leq i\leq \theta$. Let $s=\prod_{i=1}^\theta (\overline{\chi_i} g_i)^{k_i}$, where $0\leq k_i <n_i$. Assume that $\Phi_R^{-1}(s)(h)=1$ for all $h\in B$. Then we will show that $s=1$ and hence complete the proof. Since $\Phi_R^{-1}(s)(\overline{\chi_j} g_j)=1$, we get $\sum_{i=1}^\theta a_{ji}k_i=0$. Using $A^*A=\det(A)I$ and $(\ord(\chi_ig_i),\det(A))=1$, where $A^*$ is the adjoint matrix of $A$, we obtain $n_i|k_i$ for all $1\leq i\leq \theta$. This implies that $s=1$ and so (ii) holds.

Suppose $\widehat{G}=\langle \chi_i|\;1\leq i\leq \theta \rangle$ and $G(H^*)=\langle \overline{\chi_i}|\;1\leq i\leq \theta \rangle$ as groups. To show remain parts, we only need to prove that the conditions of Theorem \ref{thm3.rf} hold for $H$. Since $G(H^*)=\langle \overline{\chi_i}|\;1\leq i\leq \theta \rangle$, the condition (i) of Theorem \ref{thm3.rf} holds for $H$. Using $\widehat{G}=\langle \chi_i|\;1\leq i\leq \theta \rangle$, we know the condition (ii) of Theorem \ref{thm3.rf} holds. By definition, we have $S_H^2(x_i)=q_{ii}^{-1}x_i$. Since the order of $q_{ii}$ is odd for all $1\leq i\leq \theta$, we can find $r\in \mathbb{N}^+$ such that the order of $q_{ii}$ is $2r-1$. Hence the condition (iii) of Theorem \ref{thm3.rf} holds.
\end{proof}

\subsection{Ribbon factorizable Hopf algebras $D(\mathcal{U}(\mathcal{D}_{red}))/I$.}
Next, we will use Theorem \ref{thm3.rf} to study Hopf algebras $\mathcal{U}(\mathcal{D}_{red})$. Let $G$ be a finite abelian group. Now fix a reduced YD-datum $\mathcal{D}_{red}=(f_i, g_i, \chi_i)_{1\leq i\leq \theta}$, where $f_i,g_i\in G,\chi_i\in \widehat{G}$, $1\leq i \leq \theta$ such that
$$q_{ij}=\chi_j(g_i)=\chi_i(f_j),\;f_ig_i\neq 1,\;1\leq i,j \leq \theta.$$
Define
\begin{align*}
&V=\oplus_{i=1}^\theta \Bbbk v_i \in _G^G\mathcal{YD}, \quad \quad \text{with basis } v_i\in V_{g_i}^{\chi_i},1\leq i \leq \theta,\\
&W=\oplus_{i=1}^\theta \Bbbk w_i \in _G^G\mathcal{YD}, \quad \;\; \text{with basis } w_i\in W_{f_i}^{\chi_i^{-1}},1\leq i \leq \theta.
\end{align*}

Let $\ord(\chi_if_i)=n_i$ for $1\leq i \leq \theta$. Since our goal, we assume that $\langle \chi_if_i|\;1\leq i\leq \theta \rangle\cong \mathbb{Z}_{n_1}\times ...\times \mathbb{Z}_{n_\theta}$ in the following content. Let $t_j$ be a primitive $n_j$th root of $1$ and let $\chi_i(g_j)\chi_j(g_i)=t_j^{m_{ij}}$ for $1\leq i,j\leq m$. Then we have
\begin{theorem}\label{thm4.5}
If the following conditions hold for $\mathcal{U}(\mathcal{D}_{red})$
\begin{itemize}
\item[(i)] $\mathfrak{B}(V)$ is finite dimensional;
\item[(ii)] $\widehat{G}=\langle \chi_i|\;1\leq i\leq \theta \rangle$ as group;
\item[(iii)] $(i_1,...,i_m)M=(0,...,0),\;(i_1,...,i_m)\in  \mathbb{Z}_{n_1}\times ...\times \mathbb{Z}_{n_m}$ has a unique solution, here the matrix $M=(m_{ij})_{1\leq i,j\leq \theta}$;
\end{itemize}
then $(\mathcal{U}(\mathcal{D}_{red}),\overline{\mathcal{R}})$ is factorizable Hopf algebra with a unique ribbon element.
\end{theorem}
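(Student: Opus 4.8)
The plan is to recognize $\mathcal{U}(\mathcal{D}_{red})$ as one of the quotients $D(H)/I$ produced by Theorem \ref{thm3.rf}, taking $H=A=\mathfrak{B}(V)\#\Bbbk G$. By Theorem \ref{thm2.5} we have $\mathcal{U}(\mathcal{D}_{red})\cong D(A)/\langle\chi_if_i-1\mid 1\le i\le\theta\rangle$; once the $\chi_if_i$ are known to be central grouplikes this ideal is exactly $D(A)\langle\chi_if_i\rangle^+=I$, and the $\overline{\mathcal{R}}$ of the statement is the image of the standard $\mathcal{R}$-matrix of $D(A)$, so the braidings agree. Thus everything reduces to checking the hypotheses of Corollary \ref{coro3.1} and of Theorem \ref{thm3.rf} for $A$, where the algebra generators are $\{g\mid g\in G\}\cup\{v_i\}$, the grouplikes \emph{playing the role of} the $g_i$ of the general setup are the $f_i\in G(A)$, and the characters are the $\chi_i\in G(A^*)$ extended by $\chi_i(v_j)=0$.

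First I would verify the two hypotheses of Corollary \ref{coro3.1}. For (i) I would compute inside $D(A)$ the conjugation of each generator by $\chi_i$: using the multiplication rule of the double together with $\chi_i(v_j)=0$ one finds $\chi_iv_j\chi_i^{-1}=q_{ji}^{-1}v_j$ and $\chi_ig\chi_i^{-1}=g$, while on the other side $f_i^{-1}v_jf_i=\chi_j(f_i)^{-1}v_j=q_{ji}^{-1}v_j$ (since $q_{ji}=\chi_j(f_i)$) and $f_i^{-1}gf_i=g$. Hence $\chi_i\rightharpoonup a_j\leftharpoonup\chi_i^{-1}=f_i^{-1}a_jf_i$ on all generators, which is hypothesis (i) and already gives $\chi_if_i\in Z(D(A))$. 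For (ii) I would note that the entries governing nondegeneracy are $(\Phi_{\mathcal{R}}^{-1}(\chi_if_i))(\chi_jf_j)=\chi_i(f_j)\chi_j(f_i)=q_{ij}q_{ji}=\chi_i(g_j)\chi_j(g_i)=t_j^{m_{ij}}$, so the relevant matrix is precisely $M$; combined with the standing assumption $\langle\chi_if_i\rangle\cong\mathbb{Z}_{n_1}\times\cdots\times\mathbb{Z}_{n_\theta}$, Lemma \ref{lem3.4} turns hypothesis (iii) of the present theorem into the unique-solution condition of Corollary \ref{coro3.1}. By Corollary \ref{coro3.1} (equivalently Lemma \ref{lem3.0.0.x}) this already shows $(\mathcal{U}(\mathcal{D}_{red}),\overline{\mathcal{R}})$ is factorizable.

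For the ribbon part I would check the three remaining hypotheses of Theorem \ref{thm3.rf} for $A$. Hypothesis (i), $G(A^*)=\langle\chi_i\rangle$, follows because any algebra character of $A$ must satisfy $\chi(v_i)=q_{ii}\chi(v_i)$ and hence $\chi(v_i)=0$ (as $q_{ii}\ne1$), so $G(A^*)\cong\widehat{G}$, which equals $\langle\chi_i\rangle$ by hypothesis (ii) of the present theorem. Hypothesis (ii), $G(A)\cap Z(A)=\{1\}$, holds because $g\in G$ is central iff $\chi_i(g)=1$ for all $i$, and $\widehat{G}=\langle\chi_i\rangle$ separates the points of $G$. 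For hypothesis (iii) I would use $S_A(v_i)=-g_i^{-1}v_i$, whence $S_A^2(v_i)=g_i^{-1}v_ig_i=q_{ii}^{-1}v_i$, so $\lambda_i=q_{ii}^{-1}$ while $S_A^2$ fixes the grouplike generators. Applying Theorem \ref{thm3.rf} then delivers the unique ribbon element.

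The main obstacle is hypothesis (iii) of Theorem \ref{thm3.rf}: it requires each $\lambda_i=q_{ii}^{-1}$ to have \emph{odd} order, i.e.\ $\ord(q_{ii})$ odd, and this arithmetic condition is not literally among the three hypotheses listed. I would therefore either record it as an additional standing assumption (it does hold, for instance, in the type $T_3$ datum of Example \ref{ex3.3}), or, when $\ord(q_{ii})$ is even, replace Theorem \ref{thm3.rf} by Theorem \ref{thm3.4}: there one only needs a single grouplike $g_0\in G(A)$ with $S_A^2(x)=g_0xg_0^{-1}$, i.e.\ $\chi_i(g_0)=q_{ii}^{-1}$ for all $i$, and exhibiting such a $g_0$ (the grouplike implementing $S^2$ on the double) is the delicate step. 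The rest of the argument — the conjugation computation in $D(A)$ and the careful bookkeeping ensuring the central elements are the $\chi_if_i$ rather than the $\chi_ig_i$, so that the quotient matches Theorem \ref{thm2.5} — is routine but must be done with care.
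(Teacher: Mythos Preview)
Your approach is exactly the paper's: both recognize $\mathcal{U}(\mathcal{D}_{red})\cong D(A)/\langle\chi_if_i-1\rangle$ via Theorem \ref{thm2.5} with $A=\mathfrak{B}(V)\#\Bbbk G$, and then invoke Theorem \ref{thm3.rf}. The paper's proof is two sentences (``Due to (ii)--(iii), we know the conditions of Theorem \ref{thm3.rf} hold''), while you actually carry out the verification of each hypothesis; your computations for Corollary \ref{coro3.1}(i) and for $G(A^*)=\widehat G$, $G(A)\cap Z(A)=\{1\}$, $S_A^2(v_i)=q_{ii}^{-1}v_i$ are all correct.

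The ``obstacle'' you flag is real and is in fact a gap in the paper's statement, not in your argument. Theorem \ref{thm3.rf}(iii) needs $\ord(q_{ii})$ odd, and this is not among the listed hypotheses of Theorem \ref{thm4.5}. The paper tacitly assumes it: immediately after the corollary to Theorem \ref{thm4.5} it writes ``For the case that the order of $q_{ii}$ is not odd for some $1\leq i\leq \theta$, we introduce another theorem as follows'' and states Theorem \ref{thm4.6}. So your instinct to record odd order as a standing assumption (or to switch to Theorem \ref{thm3.4} otherwise) is exactly how the paper handles it, just without saying so in the statement of Theorem \ref{thm4.5}.
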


\begin{proof}
Since $\mathfrak{B}(V)$ is finite dimensional, the $\overline{\mathcal{R}}$ is well defined. Due to (ii)-(iii), we know the conditions of Theorem \ref{thm3.rf} hold for $\mathcal{U}(\mathcal{D}_{red})$. Hence $(\mathcal{U}(\mathcal{D}_{red}),\overline{\mathcal{R}})$ is factorizable Hopf algebra with a unique ribbon element.
\end{proof}

As a corollary, we have
\begin{corollary}\label{coro4.3}
The Hopf algebra $K(\alpha,n)$ is a factorizable Hopf algebra with unique ribbon element.
\end{corollary}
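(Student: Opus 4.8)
The plan is to deduce Corollary~\ref{coro4.3} directly from Theorem~\ref{thm4.5} by verifying its three hypotheses for the reduced YD-datum $\mathcal{D}_{red}=(g_i,g_i,\chi_i)_{1\le i\le 2}$ defining $K(\alpha,n)$ in Example~\ref{ex3.3}. Here $\theta=2$, $G=\mathbb{Z}_{3n}\times\mathbb{Z}_{3n}=\langle a,b\rangle$, and $\chi_1,\chi_2$ are the characters of Example~\ref{ex3.1}. Condition~(i) of Theorem~\ref{thm4.5} is immediate: the module $V$ attached to this datum is exactly the braided vector space $V_2$ of Example~\ref{ex3.1}, whose Nichols algebra satisfies $\dim\mathfrak{B}(V_2)=3^4n^2<\infty$ by Example~\ref{ex3.2}. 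One also records the standing assumption preceding Theorem~\ref{thm4.5}: since the $G$-components $a,b$ of $\chi_1g_1,\chi_2g_2$ already freely generate $G$, the elements $\chi_ig_i$ each have order $3n$ and generate a subgroup isomorphic to $\mathbb{Z}_{3n}\times\mathbb{Z}_{3n}$, so $n_1=n_2=3n$. Thus the remaining work is the arithmetic of conditions~(ii) and~(iii).

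First I would verify condition~(ii), namely $\widehat{G}=\langle\chi_1,\chi_2\rangle$. Writing a character by its exponents relative to $\alpha$, the isomorphism $\widehat{G}\cong\mathbb{Z}_{3n}\times\mathbb{Z}_{3n}$ sends $\chi_1\mapsto(-2,1)$ and $\chi_2\mapsto(1,n)$, reading off $\chi_1(a)=\alpha^{-2}$, $\chi_1(b)=\alpha$, $\chi_2(a)=\alpha$, $\chi_2(b)=\alpha^{n}$. Hence $\chi_1,\chi_2$ generate $\widehat{G}$ if and only if the integer matrix $\left(\begin{smallmatrix}-2&1\\1&n\end{smallmatrix}\right)$ is invertible over $\mathbb{Z}_{3n}$, i.e. its determinant $-(2n+1)$ is a unit modulo $3n$. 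Since $\gcd(2n+1,n)=1$ and $3n$ is odd, this reduces to $\gcd(2n+1,3)=1$, which holds precisely when $n\not\equiv1\pmod 3$.

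Next I would check condition~(iii). From the braiding matrix $\left(\begin{smallmatrix}\alpha^{-2}&\alpha\\\alpha&\alpha^{n}\end{smallmatrix}\right)$ of Example~\ref{ex3.1} one has $q_{ij}=\chi_j(g_i)$, so $\chi_i(g_j)\chi_j(g_i)=q_{ij}q_{ji}$ yields the exponent matrix $M=\left(\begin{smallmatrix}-4&2\\2&2n\end{smallmatrix}\right)$ modulo $3n$, once one confirms $n_1=n_2=3n$ so that $t_1,t_2$ may both be taken equal to $\alpha$. The system $(i_1,i_2)M=(0,0)$ in $\mathbb{Z}_{3n}\times\mathbb{Z}_{3n}$ then has only the trivial solution if and only if $\det M=-4(2n+1)$ is a unit modulo $3n$; as $\gcd(4,3n)=1$, this is once more the condition $\gcd(2n+1,3n)=1$. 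With (i)--(iii) established, Theorem~\ref{thm4.5} gives that $(K(\alpha,n),\overline{\mathcal{R}})$ is factorizable with a unique ribbon element.

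The main obstacle is not conceptual but bookkeeping: one must pin down the orders $n_i$ correctly (using that $3n$ is odd, so $\alpha^{-2}$ and $\alpha^{n}$ still have order $3n$), choose compatible primitive roots $t_j$, and track signs in $M$, after which both nontrivial hypotheses collapse to the single coprimality $\gcd(2n+1,3n)=1$. It is worth flagging that this condition --- equivalently $n\not\equiv1\pmod 3$ --- is exactly what makes the two determinants units; I would therefore either impose it as a standing hypothesis on $n$ or note explicitly that it is in force throughout this subsection.
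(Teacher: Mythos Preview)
Your approach---verifying hypotheses (i)--(iii) of Theorem~\ref{thm4.5} for the datum of Example~\ref{ex3.3}---is exactly the paper's, whose proof simply asserts ``the conditions of Theorem~\ref{thm4.5} hold'' and stops. Your arithmetic is correct: condition~(i) is Example~\ref{ex3.2}; the standing assumption $\langle\chi_1g_1,\chi_2g_2\rangle\cong\mathbb{Z}_{3n}\times\mathbb{Z}_{3n}$ follows from the $G$-components $a,b$; and both (ii) and (iii) reduce, via the determinants $-(2n+1)$ and $-4(2n+1)$, to $\gcd(2n+1,3n)=1$.

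Your final caveat is well placed and in fact sharper than the paper. The only hypothesis on $n$ recorded in Examples~\ref{ex3.1} and~\ref{ex3.3} is that $n$ be odd, yet your computation shows conditions (ii) and (iii) of Theorem~\ref{thm4.5} fail precisely when $n\equiv 1\pmod 3$ (e.g.\ $n=7$). So the corollary, as stated, requires the additional constraint $n\not\equiv 1\pmod 3$; the paper's one-line proof glosses over this. Your suggestion to impose it as a standing hypothesis is the right fix.
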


\begin{proof}
Directly, we know the conditions of Theorem \ref{thm4.5} hold. Hence we complete the proof.
\end{proof}
For the case that the order of $q_{ii}$ is not odd for some $1\leq i\leq \theta$, we introduce another theorem as follows.  Fix a reduced YD-datum $\mathcal{D}_{red}=(f_i, g_i, \chi_i)_{1\leq i\leq \theta}$, where $f_i,g_i\in G,\chi_i\in \widehat{G}$, $1\leq i \leq \theta$ such that
$$q_{ij}=\chi_j(g_i)=\chi_i(f_j),\;f_ig_i\neq 1,\;1\leq i,j \leq \theta.$$

\begin{theorem}\label{thm4.6}
If the following conditions hold for $\mathcal{U}(\mathcal{D}_{red})$
\begin{itemize}
\item[(i)] $\mathfrak{B}(V)$ is finite dimensional;
\item[(ii)] $G=\mathbb{Z}_{n}^\theta=\langle g_1,\cdots,g_\theta|\;g_i^{n}=1,\;g_ig_j=g_jg_i,\;1\leq i,j\leq \theta\rangle$ as groups;
\item[(iii)] $\widehat{G}=\mathbb{Z}_{n}^\theta=\langle \chi_1,\cdots,\chi_\theta|\;\chi_i^{n}=1,\;\chi_i\chi_j=\chi_j\chi_i,\;1\leq i,j\leq \theta\rangle$ as groups.
\item[(iv)] Let $\omega$ be a primitive $n$th root of $1$ and let $\chi_j(g_i)=\omega^{m_{ij}}$ for $1\leq i,j\leq \theta$. And we have $(\det(M),n)=1$ and $(\det(M+M^t),n)=1$, where $M=(m_{ij})_{1\leq i,j\leq \theta}$ and $M^t$ is the transpose matrix of $M$;
\end{itemize}
then $(\mathcal{U}(\mathcal{D}_{red}),\overline{\mathcal{R}})$ is factorizable Hopf algebra with a unique ribbon element.
\end{theorem}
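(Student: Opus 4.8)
The plan is to realize $\mathcal{U}(\mathcal{D}_{red})$ as a central quotient of a Drinfel'd double and apply Theorem \ref{thm3.4}, which is the variant of Theorem \ref{thm3.rf} tailored to the situation where the orders of the $q_{ii}$ need not be odd. Put $A=\mathfrak{B}(V)\#\Bbbk G$. By condition (i) the algebra $\mathfrak{B}(V)$ is finite dimensional, so $\overline{\mathcal{R}}$ is well defined and Theorem \ref{thm2.5} identifies $\mathcal{U}(\mathcal{D}_{red})$ with $D(A)/I$, where $I=D(A)\langle \chi_if_i-1\mid 1\leq i\leq\theta\rangle^{+}$. Thus it suffices to verify the hypotheses of Theorem \ref{thm3.4} for $H=A$, with the group-likes ``$g_i$'' of Corollary \ref{coro3.1} played here by the $f_i$ and the central elements being the $\chi_if_i$.

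First I would dispose of the group-theoretic hypotheses. Since every character of $A$ annihilates the nilpotent generators $v_i$, it factors through $A/(v_1,\dots,v_\theta)=\Bbbk G$, so $G(A^{*})=\widehat{G}=\langle\chi_i\rangle$ by condition (iii); this is hypothesis (i) of Theorem \ref{thm3.4}. Likewise $G(A)=G$, and a group element $g$ is central in $A$ iff $\chi_i(g)=1$ for all $i$; as the $\chi_i$ generate $\widehat{G}$ they separate the points of $G$, forcing $g=1$, so $G(A)\cap Z(A)=\{1\}$, which is hypothesis (ii). For hypothesis (i) of Corollary \ref{coro3.1} I would compute the cross relation in $D(A)$: using the double multiplication together with $\Delta^{(2)}(v_j)=g_j\otimes g_j\otimes v_j+g_j\otimes v_j\otimes 1+v_j\otimes 1\otimes 1$ one finds $\chi_iv_j=q_{ji}^{-1}v_j\chi_i$, i.e. $\chi_i\rightharpoonup v_j\leftharpoonup\chi_i^{-1}=q_{ji}^{-1}v_j=f_i^{-1}v_jf_i$ (using $\chi_j(f_i)=q_{ji}$). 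Combined with $f_iv_j=q_{ji}v_jf_i$ this shows each $\chi_if_i$ is central, so $\langle\chi_if_i\rangle\subseteq Z(D(A))$.

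The factorizability then comes from hypothesis (ii) of Corollary \ref{coro3.1}, read through Lemma \ref{lem3.0.0.x} and Lemma \ref{lem3.4}. For these central group-likes one has $(\Phi_{\mathcal{R}}^{-1}(\chi_if_i))(\chi_jf_j)=\chi_i(f_j)\chi_j(f_i)=q_{ij}q_{ji}=\omega^{(M+M^{t})_{ij}}$, so the monodromy matrix is governed by the symmetrized matrix $M+M^{t}$. By Lemma \ref{lem3.4} this matrix is non-degenerate precisely when $(i_1,\dots,i_\theta)(M+M^{t})=(0,\dots,0)$ admits only the trivial solution in $\mathbb{Z}_n^{\theta}$, which holds because $(\det(M+M^{t}),n)=1$. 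Hence $(D(A)/I,\overline{\mathcal{R}})$ is a factorizable Hopf algebra.

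It remains to produce the group-like element $g_0$ implementing $S^{2}$, and this is where $(\det(M),n)=1$ enters (in contrast to Theorem \ref{thm4.5}, where oddness of the $q_{ii}$ allowed one to take a power of $\mathrm{g}$ instead). From $S(v_i)=-g_i^{-1}v_i$ one gets $S^{2}(v_i)=g_i^{-1}v_ig_i=q_{ii}^{-1}v_i$, while $S^{2}$ fixes $G$. I would therefore seek $g_0=\prod_k g_k^{c_k}$ with $\chi_i(g_0)=q_{ii}^{-1}$ for every $i$; writing $\chi_i(g_k)=\omega^{m_{ki}}$, this is the linear system $M^{t}c\equiv-(m_{11},\dots,m_{\theta\theta})^{t}\pmod n$, which is solvable since $(\det(M),n)=1$ makes $M^{t}$ invertible modulo $n$. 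For such $g_0\in G$ one has $S^{2}(x)=g_0xg_0^{-1}$ on the generators of $A$, hence on all of $A$, giving hypothesis (iii) of Theorem \ref{thm3.4}; the theorem then yields that $\mathcal{U}(\mathcal{D}_{red})=D(A)/I$ is ribbon factorizable with the unique ribbon element $g_0^{-1}u$. The main obstacle I anticipate is cleanly separating the two determinantal hypotheses---$(\det(M+M^{t}),n)=1$ being exactly the non-degeneracy of the monodromy pairing, and $(\det(M),n)=1$ being exactly what makes the system for $g_0$ solvable---and verifying that a single $g_0$ conjugation-implements $S^{2}$ on all of $A$ rather than merely matching it on the $v_i$, together with the cross-relation computation in $D(A)$.
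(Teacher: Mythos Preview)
Your proposal is correct and follows essentially the same route as the paper's proof: realize $\mathcal{U}(\mathcal{D}_{red})$ as $D(A)/I$ via Theorem \ref{thm2.5}, obtain factorizability from the non-degeneracy of the monodromy pairing governed by $M+M^{t}$ (using Lemma \ref{lem3.0.0.x}/Corollary \ref{coro3.1} and the hypothesis $(\det(M+M^{t}),n)=1$), and then feed everything into Theorem \ref{thm3.4} by solving the linear system $\sum_k s_k m_{kj}\equiv -m_{jj}\pmod n$ for $g_0=\prod_k g_k^{s_k}$, which is possible exactly because $(\det(M),n)=1$. Your write-up is in fact a bit more explicit than the paper's (you spell out why $G(A^{*})=\widehat{G}$, why $G(A)\cap Z(A)=\{1\}$, and the cross-relation making $\chi_if_i$ central); the only point the paper states that you leave implicit is that the $f_i$ also freely generate $G\cong\mathbb{Z}_n^{\theta}$, which is needed so that $\langle\chi_if_i\rangle\cong\mathbb{Z}_n^{\theta}$ and the framework of Corollary \ref{coro3.1} literally applies.
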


\begin{proof}
Since $\mathfrak{B}(V)$ is finite dimensional, the $\overline{\mathcal{R}}$ is well defined. Using Theorem \ref{thm2.5}, we obtain that $\mathcal{U}(\mathcal{D}_{red})\cong D(A)/\langle \chi_if_i-1|\;1\leq i\leq \theta\rangle$ as Hopf algebras. Using $\chi_j(g_i)=\chi_i(f_j)$ for $1\leq i\leq \theta$ and the assumption about $G, \widehat{G}$, we obtain that $G$ can be also written
$$G=\langle f_1,\cdots,f_\theta|\;f_i^{n}=1,\;f_if_j=f_jf_i,\;1\leq i,j\leq \theta\rangle.$$
Since $(\det(M+M^t),n)=1$, the following equation has unique solution
\begin{align*}
(i_1,...,i_m)(M+M^t)=(0,...,0),\;(i_1,...,i_m)\in  \mathbb{Z}_{n}^\theta.
\end{align*}
Hence we can use Lemma \ref{lem3.0.0.x} to get that $(\mathcal{U}(\mathcal{D}_{red}),\overline{\mathcal{R}})$ is factorizable Hopf algebra.

To show that it is a ribbon Hopf algebra with unique ribbon element, we will use Theorem \ref{thm3.4}. Since $G(A^*)=\widehat{G}=\langle \chi_1,\cdots,\chi_\theta\;1\leq i\leq \theta\rangle$ as groups, we know the condition (i) of Theorem \ref{thm3.4} holds. Using (iii) and the definition of $A$, we know that $G(A)\cap Z(A)={1}$. Let $q_{ii}=\omega^{t_i}$ for $1\leq i\leq \theta$. Due to (iii), the following equation has a unique solution
\begin{align*}
(i_1,...,i_m)M=(-t_1,...,-t_m),\;(i_1,...,i_m)\in  \mathbb{Z}_{n}^\theta,
\end{align*}
then we denote the solution as $(s_1,...,s_m)$. Define $g_0=\prod_{i=1}^m g_i^{s_i}$. Then $S^2(g_0)=g_0 v_i g_0^{-1}$ for all $1\leq i\leq \theta$. Hence $S^2(x)=g_0x{g_0}^{-1}$ for all $x\in A$. Now we can use Theorem \ref{thm3.4} to obtain that $\mathcal{U}(\mathcal{D}_{red})$ is a ribbon Hopf algebra with unique ribbon element.
\end{proof}

As a corollary, we have
\begin{corollary}\label{coro4.4}
If $(n,21)=1$, then the Hopf algebra $H(\omega,n)$ is a factorizable Hopf algebra with unique ribbon element.
\end{corollary}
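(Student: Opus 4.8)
The plan is to identify $H(\omega,n)$ with the Hopf algebra $\mathcal{U}(\mathcal{D}_{red})$ of Example \ref{ex3.3.x} and verify the four hypotheses of Theorem \ref{thm4.6}, then invoke that theorem. Here $\theta=2$ and the underlying group is $G=\mathbb{Z}_{2n}\times\mathbb{Z}_{2n}$ (Example \ref{ex3.1.x}), so the modulus playing the role of ``$n$'' in Theorem \ref{thm4.6} is $2n$; keeping this substitution straight is the main point of care. We cannot instead use the companion Theorem \ref{thm4.5}, which is tailored to the case where $\ord(q_{ii})$ is odd: here $q_{11}=q_{22}=\omega^{-3}$ has even order $2n$ whenever $(n,3)=1$, so the even-order version Theorem \ref{thm4.6} is the correct tool.

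The key data is the exponent matrix $M=(m_{ij})$ defined by $\chi_j(g_i)=\omega^{m_{ij}}$. Reading off the character values of Example \ref{ex3.1.x} gives $M=\left(\begin{smallmatrix}-3&1\\2&-3\end{smallmatrix}\right)$, so that $\det(M)=9-2=7$ and $\det(M+M^t)=\det\left(\begin{smallmatrix}-6&3\\3&-6\end{smallmatrix}\right)=36-9=27$. I would now check the four conditions in turn. Condition (i) holds since $\mathfrak{B}(V)=\mathfrak{B}(U_2)$ has dimension $(2n)^3<\infty$ by Example \ref{ex3.2.x}. Condition (ii) is the group $G=\mathbb{Z}_{2n}^2$ of Example \ref{ex3.1.x}. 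For condition (iii) the abstract structure $\widehat{G}\cong\mathbb{Z}_{2n}^2$ is clear, and the remaining content is that $\chi_1,\chi_2$ generate $\widehat{G}$; identifying a character $\chi$ with its exponent vector $(s,t)\in\mathbb{Z}_{2n}^2$ (where $\chi(a)=\omega^s$ and $\chi(b)=\omega^t$), the characters $\chi_1,\chi_2$ are precisely the columns of $M$, so they generate $\widehat{G}$ exactly when $\det(M)=7$ is a unit modulo $2n$, i.e. when $(7,2n)=1$. Finally condition (iv) asks for $(\det(M),2n)=(7,2n)=1$ and $(\det(M+M^t),2n)=(27,2n)=1$.

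It remains to reduce these arithmetic conditions to the hypothesis. Since $7$ and $27=3^3$ are odd, $(7,2n)=1\iff(n,7)=1$ and $(27,2n)=1\iff(n,3)=1$, and together these are equivalent to $(n,21)=1$; this single hypothesis therefore delivers the generation statement in (iii) as well as both coprimality statements in (iv). (The weaker condition $(n,7)=1$ is in any case already required for the reduced YD-datum of Example \ref{ex3.3.x} to exist.) With all four hypotheses of Theorem \ref{thm4.6} in force, that theorem gives that $(H(\omega,n),\overline{\mathcal{R}})$ is factorizable with a unique ribbon element. The argument involves no genuine obstacle; its only delicate points are the substitution $n\mapsto 2n$ and the two small determinant computations, after which everything collapses to the two prime conditions $(n,3)=1$ and $(n,7)=1$.
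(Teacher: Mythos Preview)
Your proof is correct and follows exactly the approach of the paper, which simply states that under the hypothesis $(n,21)=1$ one checks the conditions of Theorem \ref{thm4.6} for $H(\omega,n)$. You have merely spelled out that verification in full: computing $M=\left(\begin{smallmatrix}-3&1\\2&-3\end{smallmatrix}\right)$, $\det(M)=7$, $\det(M+M^t)=27$, and reducing the coprimality conditions $(7,2n)=1$ and $(27,2n)=1$ to $(n,21)=1$.
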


\begin{proof}
Using $(n,21)=1$, we can check that the conditions of Theorem \ref{thm4.6} hold for $H(\omega,n)$. Hence we get what we want.
\end{proof}

\subsection{Property of prime and comparison.}
By Theorem \ref{thm2.3}, we know that the set $\{x_{\beta_1}^{i_1}...x_{\beta_p}^{i_p}g|\;0\leq i_k\leq N_k,\;1\leq k\leq p,\;g\in G\}$ is a linear basis of $u(\mathcal{D},\lambda,\mu)$. Let $1\leq i\leq \theta$. Recall that we have assumed that $\{\alpha_1,...,\alpha_\theta\}$ is a system of simple roots in subsection 2.3. Define $y_i\in u(\mathcal{D},\lambda,\mu)^*$ by
$$y_i:x_{\beta_1}^{i_1}...x_{\beta_p}^{i_p}g\mapsto \delta_{i_1\beta_1+...i_p \beta_p,\alpha_i}, \text{where } 0\leq i_k\leq N_k,\;1\leq k\leq p,\;g\in G. $$

Then we have
\begin{lemma}\label{lem4.2}
Assume $\chi_i$ can be extended to an element in $G(u(\mathcal{D},\lambda,\mu)^*)$ for all $1\leq i\leq \theta$. Then the following equalities hold in the Hopf algebra $D(u(\mathcal{D},\lambda,\mu))$
\begin{align}
\label{eq4.1} \Delta(y_i)=1\otimes y_i+y_i\otimes \chi_i,\;x_iy_j-y_jx_i=\delta_{i,j}(\chi_i-g_i),\;1\leq i,j\leq \theta.
\end{align}
\end{lemma}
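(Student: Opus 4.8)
The plan is to prove the two identities in $D(u(\mathcal{D},\lambda,\mu))$ by working directly from the Drinfel'd double multiplication formula $(f\otimes h)(g\otimes k)=f[h_{(1)}\rightharpoonup g\leftharpoonup S^{-1}(h_{(3)})]\otimes h_{(2)}k$, after first establishing the coproduct of $y_i$. For the coproduct, I would compute $\Delta(y_i)$ in $(u(\mathcal{D},\lambda,\mu)^*)^{cop}$ by evaluating $\langle \Delta(y_i), a\otimes b\rangle = \langle y_i, ba\rangle$ against basis elements $a=x_{\beta_1}^{i_1}\cdots x_{\beta_p}^{i_p}g$ and $b=x_{\beta_1}^{j_1}\cdots x_{\beta_p}^{j_p}h$. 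Since $y_i$ is, by definition, dual to the PBW monomials whose total root-degree equals the simple root $\alpha_i$ (ignoring the group part), the key observation is that a product $ba$ contributes to $y_i$ exactly when the combined monomial has root-degree $\alpha_i$; because $\alpha_i$ is a \emph{simple} root and the PBW monomials are graded by the root lattice, this forces either $b$ to carry all of $\alpha_i$ and $a$ to be grouplike, or vice versa. Tracking the grouplike factor that appears when one reorders $x_{\alpha_i}$ past a group element (using the commutation relation \eqref{eq2.1.x}, which introduces the character value $\chi_i$) is what produces the asymmetric coproduct $\Delta(y_i)=1\otimes y_i+y_i\otimes \chi_i$, where $\chi_i\in G(u(\mathcal{D},\lambda,\mu)^*)$ is the extension assumed to exist.

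Next I would verify the commutation relation $x_iy_j-y_jx_i=\delta_{i,j}(\chi_i-g_i)$. Here $x_i\in u(\mathcal{D},\lambda,\mu)$ and $y_j\in (u(\mathcal{D},\lambda,\mu)^*)^{cop}$ are both viewed inside $D(u(\mathcal{D},\lambda,\mu))$, so their products are governed by the double's cross relation. Using $\Delta(y_j)=1\otimes y_j+y_j\otimes \chi_j$ just established and the coproduct $\Delta(x_i)=g_i\otimes x_i+x_i\otimes 1$, I would expand both $x_iy_j$ and $y_jx_i$ via the formula $f h = f_{(?)}(h_{(1)}\rightharpoonup f_{(?)}\leftharpoonup S^{-1}(h_{(3)}))\otimes h_{(2)}$ specialized to these low-degree elements, so that only a small number of terms appear. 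The hom-actions $\rightharpoonup$ and $\leftharpoonup$ are computed against $\langle y_j, bca\rangle$, and I would exploit that $y_j$ is supported on root-degree $\alpha_j$ to kill all but finitely many contributions. The Kronecker delta arises because $y_j$ evaluates nontrivially on $x_j$ only, so the cross term survives precisely when $i=j$; the difference $\chi_i-g_i$ emerges as the grouplike elements dual-paired through $y_i$ on the two halves of the coproducts.

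The step I expect to be the main obstacle is the bookkeeping in the coproduct computation, specifically showing that no ``mixed'' contributions survive. One must argue that when $a$ and $b$ are PBW monomials times group elements and $\langle y_i, ba\rangle\neq 0$, the root-degrees of $a$ and $b$ cannot both be nonzero and strictly smaller than $\alpha_i$; this uses that $\alpha_i$ is simple and that the multiplication in $u(\mathcal{D},\lambda,\mu)$ respects the $\mathbb{N}^\theta$-grading by root height on the nilpotent part (any reordering via \eqref{eq2.2.x}--\eqref{eq2.4.x} preserves total root-degree, possibly producing lower-order terms with the \emph{same} degree plus grouplike corrections). Care is needed because the relations \eqref{eq2.3.x} and \eqref{eq2.4.x} introduce grouplike terms $1-g_ig_j$ and $u_\alpha(\mu)$ that change the monomial length but not its root-degree, so the grading argument remains valid; I would state this grading property explicitly and then the simplicity of $\alpha_i$ does the rest. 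Once the vanishing of mixed terms is secured, both identities follow by direct, short evaluations, and I would present them as a pair of straightforward dual-pairing computations.
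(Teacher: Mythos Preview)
Your plan for the commutation relation $x_iy_j-y_jx_i=\delta_{i,j}(\chi_i-g_i)$ matches the paper's exactly: both expand $x_iy_j$ via the double's cross relation using $\Delta^{(2)}(x_i)$ and reduce to checking the three identities $g_i\rightharpoonup y_j\leftharpoonup S^{-1}(x_i)=-\delta_{ij}1$, $g_i\rightharpoonup y_j=y_j$, $x_i\rightharpoonup y_j=\delta_{ij}\chi_i$.

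For the coproduct, however, your grading claim is not correct, and this is a genuine gap. The relations \eqref{eq2.3.x} and \eqref{eq2.4.x} do \emph{not} preserve root-degree: they replace terms of degree $\alpha_k+\alpha_l$ (resp.\ $N_J\alpha$) by degree-$0$ grouplikes. When such a drop occurs inside a longer product and the resulting grouplike is then multiplied by remaining $x$'s, one can land exactly in degree $\alpha_i$. Concretely, take two disconnected $A_1$ nodes with $\lambda_{12}\neq 0$: straightening $x_2\cdot(x_1x_2)$ gives $q_{12}^{-1}x_1x_2^2-q_{12}^{-1}\lambda_{12}x_2+q_{12}^{-1}\lambda_{12}\chi_2(g_1g_2)\,x_2g_1g_2$, so $y_2(x_2\cdot x_1x_2)=q_{12}^{-1}\lambda_{12}(\chi_2(g_1g_2)-1)$, which is nonzero in general. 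What rescues the lemma is precisely the hypothesis you only use cosmetically: ``$\chi_i$ extends to $G(u(\mathcal{D},\lambda,\mu)^*)$'' forces $\chi_i(g_kg_l)=1$ whenever $\lambda_{kl}\neq 0$ (and $\chi_i(g_\alpha^{N_J})=1$ whenever $\mu_\alpha\neq 0$), and \emph{this} is what kills the mixed contributions. Your proposal never invokes the hypothesis to control these corrections.

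The paper handles this differently: it first lifts to the infinite-dimensional $U(\mathcal{D},\lambda)$, where \eqref{eq2.4.x} is absent, defines $Y_i$ and $\chi_i'$ there, proves the skew-primitive formula $\Delta(Y_i)=Y_i\otimes\epsilon+\chi_i'\otimes Y_i$ in $U(\mathcal{D},\lambda)^{\mathrm{o}}$, and then uses the hypothesis to show both $Y_i$ and $\chi_i'$ factor through the quotient $u(\mathcal{D},\lambda,\mu)$ (since $Y_i$ kills $u_\alpha(\mu)$ automatically and $\chi_i'$ kills it by assumption). The lifting buys you freedom from the root-vector relations; you would still need a careful argument for the linking relations \eqref{eq2.3.x}, but the point is that the paper's route isolates exactly where the extension hypothesis enters, whereas your direct grading argument, as written, would fail without it.
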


\begin{proof}
By Theorem \ref{thm2.4}, $\{X_{\beta_1}^{i_1}...X_{\beta_p}^{i_p}g|\;i_k\geq 0,\;1\leq k\leq p,\;g\in G\}$ is a linear basis of $U(\mathcal{D},\lambda)$. Thus we can define $Y_i\in U(\mathcal{D},\lambda)^*$ by
$$Y_i:X_{\beta_1}^{i_1}...X_{\beta_p}^{i_p}g\mapsto \delta_{i_1\beta_1+...i_p \beta_p,\alpha_i}, \text{where } 0\leq i_k\leq N_k,\;1\leq k\leq p,\;g\in G. $$
Let $\chi_i':U(\mathcal{D},\lambda)\rightarrow \Bbbk$ be the algebra map given by $\chi_i'(g)=\chi_i(g),\;\chi_i'(X_j)=0$ for $1\leq j\leq \theta$. Since the multiplication of $U(\mathcal{D},\lambda)$ keeps the degree, then it's not hard to see that $Y_i(ab)=Y_i(a)\epsilon(b)+\chi_i'(a)Y_i(b)$. Thus $Y_i\in U(\mathcal{D},\lambda)^o$ (the finite dual of $U(\mathcal{D},\lambda)$), and $\Delta(Y_i)=Y_i\otimes \epsilon +\chi_i'\otimes Y_i$. Let $I=\langle X_\alpha^{N_J}-u_\alpha(\mu)|\;\alpha\in \Phi_J^+,J\in T\rangle$. Using the fact that $u_\alpha(\mu)$ belongs to the augmentation ideal of $\Bbbk[g_i^{N_i}|\;1\leq i\leq \theta]$, we know that $Y_i(I)=0$. Since $\chi_i$ can be extended to an element in $G(u(\mathcal{D},\lambda,\mu)^*)$, i.e. $\chi_i(u_\alpha(\mu))=0$ for all $\alpha\in \Phi_J^{+},J\in T$. This implies that $\chi_i'(I)=0$. Let $\pi:U(\mathcal{D},\lambda)\rightarrow u(\mathcal{D},\lambda,\mu)$ be the natural quotient map. Then the finite dual map $\pi^o:u(\mathcal{D},\lambda,\mu)\hookrightarrow U(\mathcal{D},\lambda)^o$ satisfies $\pi^o(y_i)=Y_i$ and $\pi^o(\chi_i)=\chi_i'$ due to above discussion. But we already shown that $\Delta(Y_i)=Y_i\otimes \epsilon +\chi_i'\otimes Y_i$, so we have $\Delta(y_i)=y_i\otimes \epsilon +\chi_i\otimes y_i$ in $u(\mathcal{D},\lambda,\mu)^*$. This implies $\Delta(y_i)=1\otimes y_i+y_i\otimes \chi_i$ in $D(u(\mathcal{D},\lambda,\mu))$.

By definition of $D(u(\mathcal{D},\lambda,\mu))$, we get
$$x_iy_j=[g_i\rightharpoonup y_j\leftharpoonup S^{-1}(x_i)]g_i+(g_i\rightharpoonup y_j) x_i+ (x_i\rightharpoonup y_j)g_i.$$
To complete the proof, we need only to check the following equations hold
$$[g_i\rightharpoonup y_j\leftharpoonup S^{-1}(x_i)]=-\delta_{i,j}1,
\;(g_i\rightharpoonup y_j)=y_j,\;(x_i\rightharpoonup y_j)=\delta_{i,j}\chi_i.$$
By definition, $S^{-1}(x_i)=-x_ig_i^{-1}$. Directly, we have
$$[g_i\rightharpoonup y_j\leftharpoonup S^{-1}(x_i)](x_{\beta_1}^{i_1}...x_{\beta_p}^{i_p}g)=-y_i(x_ix_{\beta_1}^{i_1}...x_{\beta_p}^{i_p}g)\chi_\gamma(g_i^{-1}),$$ where $\gamma=i_1\beta_1+...i_p \beta_p$. Using the definition of $y_i$, we know that $$y_i(x_ix_{\beta_1}^{i_1}...x_{\beta_p}^{i_p}g)\chi_\gamma(g_i^{-1})=\delta_{i,j}\prod_{k=1}^p\delta_{i_k,0}.$$
Hence $[g_i\rightharpoonup y_j\leftharpoonup S^{-1}(x_i)]=-\delta_{i,j}1$. Similarly, one can check the other equalities.
\end{proof}
It's known that Hopf algebras $u(\mathcal{D},0,0)$ satisfy the conditions of above lemma. Next, we give following result to compare different modular tensor categories arising from Hopf algebras.
Since our goal, we assume the datum $\mathcal{D}$ satisfies the following conditions in this subsection
\begin{itemize}
\item[(i)] $\widehat{G}=\langle \chi_i|\;1\leq i\leq \theta \rangle$ as groups;
\item[(ii)] $\widehat{G}=\langle \overline{\chi_i}|\;1\leq i\leq \theta \rangle$ as groups and $\overline{\chi_i}(g_j)=\chi_j(g_i)$ for $1\leq i,j\leq \theta$
\item[(iii)] The Dynkin diagram of $(a_{ij})_{1\leq i,j\leq \theta}$ is connected;
\end{itemize}
Let $H=u(\mathcal{D},0,0)$ and $I=D(H)\langle \overline{\chi_i} g_i|\;1\leq i\leq \theta\rangle^{+}$. By Theorem \ref{thm4.2}, we know that $(D(H)/I,\overline{\mathcal{R}})$ is ribbon factorizable Hopf algebra. For convenience, we denote $D(H)/I$ as $H(\mathcal{D})$.

\begin{theorem}\label{thm4.4}
The set of quotient Hopf algebras of $H(\mathcal{D})$ are the same as the set of quotient Hopf algebras of $\Bbbk [G(H(\mathcal{D}))/S]$, where $S=\langle \chi_i^{-1}g_i|\;1\leq i\leq \theta\rangle$ as groups.
\end{theorem}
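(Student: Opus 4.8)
The plan is to exploit the explicit pointed presentation of $H(\mathcal{D})=D(H)/I$ and to reduce the determination of all of its quotient Hopf algebras to a computation on group-likes, in the spirit of the coradical-filtration analysis of \cite{TM,ME}. First I would record the generators-and-relations structure. Since $H=u(\mathcal{D},0,0)$ is pointed with abelian $G$, the double $D(H)$ is pointed with group of group-likes $\widehat{G}\times G$; passing to the quotient by $I=D(H)\langle \overline{\chi_i}g_i\rangle^{+}$ identifies $\overline{\chi_i}$ with $g_i^{-1}$, so that $G(H(\mathcal{D}))=(\widehat{G}\times G)/\langle \overline{\chi_i}g_i\rangle$. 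As an algebra $H(\mathcal{D})$ is then generated by these group-likes together with the skew-primitives $x_i$ and $y_i$, whose coproducts and, crucially, the commutator $x_iy_j-y_jx_i=\delta_{i,j}(\chi_i-g_i)$ are supplied by Lemma \ref{lem4.2}.

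Next I would construct the canonical surjection onto the group algebra. The two-sided ideal generated by $\{x_i,y_i\mid 1\le i\le\theta\}$ is a Hopf ideal, since each generator is skew-primitive; in the quotient the relation $x_iy_i-y_ix_i=\chi_i-g_i$ forces $\chi_i=g_i$ for every $i$, i.e. it collapses precisely the subgroup $S=\langle \chi_i^{-1}g_i\rangle$. Hence $H(\mathcal{D})/\langle x_i,y_i\rangle\cong \Bbbk[G(H(\mathcal{D}))/S]$, and this is the maximal quotient of $H(\mathcal{D})$ on which all the skew-primitives vanish. This both explains the appearance of $S$ and shows that every quotient Hopf algebra of $\Bbbk[G(H(\mathcal{D}))/S]$ is automatically one of $H(\mathcal{D})$, giving one direction of the asserted identification.

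For the converse direction I would classify the Hopf ideals of $H(\mathcal{D})$ by intersecting them with the coradical filtration, following \cite{TM,ME}. Given a quotient $\pi\colon H(\mathcal{D})\to Q$, the image $\pi(x_i)$ is a skew-primitive of the prescribed type and $\pi(y_i)$ likewise, so there are only finitely many possibilities to analyse. Using the connectedness of the Dynkin diagram of $(a_{ij})$ (so that the simple-root data are linked through the Serre relations) together with the standing non-degeneracy assumptions $\widehat{G}=\langle\chi_i\rangle=\langle\overline{\chi_i}\rangle$, I would argue that the behaviour of $\pi$ on the skew-primitives is completely pinned down by, and matched to, the induced identifications among the group-likes, and hence that $\pi$ factors through $\Bbbk[G(H(\mathcal{D}))/S]$ followed by a group-algebra quotient. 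Combined with the previous paragraph, this matches the two lattices of quotient Hopf algebras.

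The hard part will be exactly this last step: controlling the images of the $x_i,y_i$ under an arbitrary Hopf surjection and excluding "mixed" quotients that retain some skew-primitives while killing others. The lever is again the commutator $x_iy_i-y_ix_i=\chi_i-g_i$, which ties any surviving skew-primitive to a nontrivial difference of group-likes; the connectedness hypothesis is what should propagate such a constraint across all indices $i$, and it is the careful verification of this propagation — together with checking that the resulting group-theoretic data are genuinely exhausted by the subgroups lying between $S$ and $G(H(\mathcal{D}))$ — that will require the most attention.
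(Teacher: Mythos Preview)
Your plan is essentially the paper's own argument: show that $H(\mathcal{D})$ is pointed with $G(H(\mathcal{D}))=G$, that any nonzero Hopf ideal must meet the coradical filtration in a nontrivial skew-primitive, and then use the commutator $x_iy_i-y_ix_i=\chi_i-g_i$ of Lemma~\ref{lem4.2} together with connectedness of the Dynkin diagram to force the ideal to contain all of $x_j,y_j,(1-\chi_j^{-1}g_j)$, so that every nontrivial quotient factors through $\Bbbk[G/S]$. One technical correction to your sketch: the propagation across indices is not driven by the Serre relations (those do not let you isolate $x_j$ once $x_i$ lies in the ideal) but by conjugation with group-likes, namely $(\chi_i^{-1}g_i)x_j(\chi_i^{-1}g_i)^{-1}=q_{ii}^{a_{ij}}x_j$; thus once $(1-\chi_i^{-1}g_i)$ is in the ideal and $a_{ij}\neq 0$ one gets $x_j$, and similarly the hypothesis $\widehat{G}=\langle\chi_i\rangle$ is used to pass from an arbitrary $(1-g)$ in the ideal to some $x_i$ via $gx_ig^{-1}=\chi_i(g)x_i$.
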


\begin{proof}
Let $I_0=\langle(1-\chi_i^{-1}g_i),x_i,y_i|\;1\leq i\leq \theta\rangle $ as ideal. Suppose $0\neq I_1\subseteq H(\mathcal{D})$ is a Hopf ideal. Then we will show $I_0\subseteq I_1$. Since $u(\mathcal{D},0,0)^*=\langle \chi_i,y_i|\;1\leq i\leq \theta \rangle$ as algebras, we know $H(\mathcal{D})$ is pointed Hopf algebra. By assumption, $\widehat{G}=\langle \overline{\chi_i}|\;1\leq i\leq \theta \rangle$ as groups. Hence we have $G(H(\mathcal{D}))=G$. This implies that all the skew primitive elements of  $H(\mathcal{D})$ belong to the set $\{ g-h, gx_i,gy_i|\;1\leq i\leq \theta,g,h\in G\}$. Since $I_1\neq 0$ is a coideal of $H(\mathcal{D},\lambda,\mu)$, we know that $I_1$ must contain a skew primitive element, i.e. $(1-g)\in I_1 $ or $x_i\in I_1$ or $y_i\in I_1$ for some $g\in G, 1\leq i\leq \theta$. We claim that there is $x_i\in I_1$. Assume $(1-g)\in I_1 $. Since $\widehat{G}=\langle \chi_i|\;1\leq i\leq \theta \rangle$ as groups, we can find some $\chi_i\in G$ such that $\chi_i(g)\neq 1$. Note that $(1-\chi_i(g))(x_i)=(1-g)x_i+gx_i(1-g^{-1})$, hence $x_i\in I_1$. Similarly, if $y_i\in I_1$, then $(1-\chi_i^{-1}g_i)\in I_1$ due to the relation \eqref{eq4.1} in Lemma \ref{lem4.2}. Using $(\chi_i^{-1}g_i)(x_i)(\chi_i^{-1}g_i)^{-1}=q_{ii}^2 x_i$, we get that $x_i\in I_1$ and so the claim holds. Now we assume $x_i\in I_1$. Using the relation \eqref{eq4.1} in Lemma \ref{lem4.2} again, we obtain that $(1-\chi_i^{-1}g_i)\in I_1$. Note that $(\chi_i^{-1}g_i)(y_i)(\chi_i^{-1}g_i)^{-1}=q_{ii}^{-2} y_i$, hence $y_i\in I_1$. So we have shown $\{(1-\chi_i^{-1}g_i),x_i,y_i\}\subseteq I_1$. Using $(\chi_i^{-1}g_i)(x_j)(\chi_i^{-1}g_i)^{-1}=q_{ii}^{a_{ij}} x_j$ and $(\chi_i^{-1}g_i)(y_j)(\chi_i^{-1}g_i)^{-1}=q_{ii}^{-a_{ij}} y_j$, we know that $\{(1-\chi_j^{-1}g_j),x_j,y_j\}\subseteq I_1$ if $a_{ij}\neq 0$. By assumption, the Dynkin diagram of $(a_{ij})_{1\leq i,j\leq \theta}$ is connected. Hence $I_0\subseteq I_1$. This implies that $H(\mathcal{D})/I_1$ is a quotient of $H(\mathcal{D})/I_0$. Note that $H(\mathcal{D})/I_0\cong \Bbbk [G(H(\mathcal{D}))/S]$ as Hopf algebras, and so we have complete the proof.
\end{proof}
As a corollary, we get
\begin{corollary}\label{coro4.2}
For the Hopf algebra $H(\mathcal{D})$, if $\overline{\chi_i}(g_j)=\chi_j(g_i)$ for $1\leq i,j\leq \theta$ and $G=\langle g_i^2|\;1\leq i\leq \theta \rangle$ as groups, then $\Rep(H(\mathcal{D}))$ is prime modular tensor category.
\end{corollary}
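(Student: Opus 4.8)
The plan is to use Theorem \ref{thm4.4} to reduce the classification of non-degenerate subcategories to a computation on the pointed part. Since $H(\mathcal{D})$ is ribbon factorizable by Theorem \ref{thm4.2}, the category $\mathcal{C}:=\Rep(H(\mathcal{D}))$ is modular, so it suffices to show that every topologizing non-degenerate braided tensor subcategory $\mathcal{E}\subseteq\mathcal{C}$ is either $\mathcal{C}$ or $\mathbf{vect}_{\Bbbk}$.

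First I would invoke the standard dictionary between full topologizing tensor subcategories of $\Rep(K)$ (closed under tensor products and duals) and quotient Hopf algebras of $K$: such an $\mathcal{E}$ equals $\Rep(\overline{K})$, where $\overline{K}=K/J$ and $J=\bigcap_{V\in\mathcal{E}}\mathrm{Ann}(V)$ is the Hopf ideal cut out by the objects of $\mathcal{E}$, the braiding being automatically inherited on a full subcategory. If $J=0$ then $\mathcal{E}=\mathcal{C}$, which is the first allowed case. Otherwise $\overline{K}$ is a proper quotient Hopf algebra of $H(\mathcal{D})$, and Theorem \ref{thm4.4} tells us that $\overline{K}$ is then a quotient Hopf algebra of the group algebra $\Bbbk[G(H(\mathcal{D}))/S]$ with $S=\langle \chi_i^{-1}g_i\mid 1\le i\le\theta\rangle$. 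Hence $\overline{K}\cong\Bbbk\Gamma$ for some abelian quotient $\Gamma$ of $G/S$, and $\mathcal{E}=\Rep(\Bbbk\Gamma)$ is pointed: its simple objects are the one-dimensional representations $\Bbbk_\psi$ indexed by the characters $\psi$ of $\Gamma$, regarded as characters of $G$ trivial on $S$.

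The heart of the argument is to show that every such proper $\mathcal{E}$ is symmetric, i.e. that the double braiding $c_{\Bbbk_\phi,\Bbbk_\psi}c_{\Bbbk_\psi,\Bbbk_\phi}$ is the identity for all $\psi,\phi$. The plan is to evaluate the monodromy scalar $(\psi\otimes\phi)(\overline{\mathcal{R}}_{21}\overline{\mathcal{R}})$ directly. Because $\psi,\phi$ annihilate the $x_i,y_i$ (they factor through a group algebra), only the group-like part of $\overline{\mathcal{R}}$, inherited from the standard $\mathcal{R}$-matrix of $D(u(\mathcal{D},0,0))$, contributes; this reduces the monodromy to the pairing between $G$ and $\widehat{G}$, which in suitable coordinates is governed by the symmetrized exponent matrix $M+M^{t}$, the entries of $M$ being read off from $\chi_j(g_i)$ and the identification $\overline{\chi_i}=g_i^{-1}$ holding in $H(\mathcal{D})$. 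The defining relation of $\mathcal{E}$, namely $\chi_i=g_i$ on every object (equivalently $\psi\in S^{\perp}$), then forces the corresponding exponent vectors into the kernel of $M+M^{t}$, which makes the monodromy trivial. Here the hypotheses enter decisively: the identity $\overline{\chi_i}(g_j)=\chi_j(g_i)$ is what symmetrizes the relevant form, while $G=\langle g_i^{2}\mid 1\le i\le\theta\rangle$ guarantees both that the $g_i$ generate $G$ (so that $G(H(\mathcal{D}))=G$ and Theorem \ref{thm4.4} applies with no stray group-likes) and that the residual $2$-torsion in the form computation is absorbed, so that the monodromy is genuinely $1$ rather than merely a sign.

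Once $\mathcal{E}$ is known to be symmetric it coincides with its own M\"{u}ger center, so a non-degenerate $\mathcal{E}$ must be $\mathbf{vect}_{\Bbbk}$. Combining this with the case $J=0$ shows that the only topologizing non-degenerate braided tensor subcategories of $\Rep(H(\mathcal{D}))$ are $\mathbf{vect}_{\Bbbk}$ and $\Rep(H(\mathcal{D}))$ itself, which is exactly primality. I expect the main obstacle to be the monodromy computation of the previous paragraph: setting up coordinates so that the group-like part of $\overline{\mathcal{R}}$ is expressed through $M+M^{t}$, verifying that the constraint $\psi\in S^{\perp}$ lands in its kernel, and carrying out the careful bookkeeping of orders and $2$-torsion that makes the hypothesis $G=\langle g_i^{2}\rangle$ do its work.
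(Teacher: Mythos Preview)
Your proposal takes a longer route than the paper and, in doing so, misses the one-line simplification that makes the corollary immediate. Both arguments begin the same way: a proper topologizing non-degenerate braided tensor subcategory corresponds to a nonzero Hopf ideal, and Theorem \ref{thm4.4} forces such a quotient to factor through $\Bbbk[G(H(\mathcal{D}))/S]$ with $S=\langle \chi_i^{-1}g_i\mid 1\le i\le\theta\rangle$. At this point you launch into a monodromy computation on the pointed part, aiming to show that any such subcategory is symmetric. The paper does something much simpler: it observes that $G/S$ is already the trivial group, so the only proper quotient is $\Bbbk$ and there is nothing to compute.

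Here is the step you overlooked. In $H(\mathcal{D})$ the relation $\overline{\chi_i}g_i=1$ holds by construction, so $\overline{\chi_i}=g_i^{-1}$ as group-likes (you noted this yourself). The hypothesis of the corollary ensures $\overline{\chi_i}=\chi_i$, whence $\chi_i^{-1}g_i=g_i\cdot g_i=g_i^{2}$. Therefore $S=\langle g_i^{2}\mid 1\le i\le\theta\rangle$, and the second hypothesis $G=\langle g_i^{2}\rangle$ gives $S=G$. Since $G(H(\mathcal{D}))=G$, the quotient $G(H(\mathcal{D}))/S$ is trivial and every nonzero Hopf ideal gives the trivial quotient $\Bbbk$. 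Primality follows at once.

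Your monodromy computation is therefore unnecessary, and the difficulties you anticipated (setting up coordinates for $M+M^{t}$, handling $2$-torsion, checking that $S^{\perp}$ lands in the kernel) simply do not arise. The role of the hypothesis $G=\langle g_i^{2}\rangle$ is not to absorb $2$-torsion in a pairing but to collapse $G/S$ entirely.
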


\begin{proof}
Using above theorem, we know that the set of quotient Hopf algebras of $H(\mathcal{D})$ are the same as the set of quotient Hopf algebras of $\Bbbk [G(H(\mathcal{D}))/S]$, where $S=\langle \chi_i^{-1}g_i|\;1\leq i\leq \theta\rangle$ as groups. Note that $\overline{\chi_i}=\chi_i$ in this case, hence $S=\langle g_i^2|\;1\leq i\leq \theta \rangle$. By assumption about $G$, we know $S=G$. Since $G(H(\mathcal{D}))=G$, we know that quotient Hopf algebras of $H(\mathcal{D})$ are trivial. Hence $\Rep(H(\mathcal{D}))$ is prime modular tensor category.
\end{proof}

\begin{remark}
\emph{In fact, the authors \cite{LC} have characterized the conditions for $H(\mathcal{D})$ to be a ribbon factorizable Hopf algebra by using other methods. In this Corollary, we also considered the prime property.}
\end{remark}

We say that a symmetric square matrix is \emph{connected} if it can't be written into the following form by permutating its rows or permutating its lines:
$$\begin{pmatrix} A & 0 \\ 0 & B \end{pmatrix}.$$
Given a reduced YD-datum $\mathcal{D}_{red}=(f_i, g_i, \chi_i)_{1\leq i\leq \theta}$, where $f_i,g_i\in G,\chi_i\in \widehat{G}$, $1\leq i \leq \theta$. Recall that we have assumed that $\langle \chi_if_i|\;1\leq i\leq \theta \rangle\cong \mathbb{Z}_{n_1}\times ...\times \mathbb{Z}_{n_\theta}$. Let $t_j$ be a primitive $n_j$th root of $1$ and let $\chi_i(g_j)\chi_j(g_i)=t_j^{m_{ij}}$ for $1\leq i,j\leq m$. Define $M=(m_{ij})_{1\leq i,j\leq m}$. Then we have
\begin{theorem}
Assume that $\mathfrak{B}(V)$ is finite dimensional and the above matrix $M$ is connected, then all quotient Hopf algebras of $\mathcal{U}(\mathcal{D}_{red})$ are the same as the set of all quotient Hopf algebras of $\Bbbk [G/S]$, where $S=\langle f_ig_i|\;1\leq i\leq \theta\rangle$ as groups.
\end{theorem}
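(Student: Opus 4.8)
The plan is to follow the template of Theorem~\ref{thm4.4}: produce a single distinguished Hopf ideal $I_0$ whose quotient is $\Bbbk[G/S]$, and then show that \emph{every} nonzero Hopf ideal of $\mathcal{U}(\mathcal{D}_{red})$ already contains $I_0$. Concretely, I would set
$$I_0=\langle\,1-f_ig_i,\ v_i,\ w_i \mid 1\le i\le\theta\,\rangle.$$
Because $\Delta(v_i)=g_i\otimes v_i+v_i\otimes 1$, $\Delta(w_i)=f_i\otimes w_i+w_i\otimes 1$ and $\Delta(1-f_ig_i)=(1-f_ig_i)\otimes 1+f_ig_i\otimes(1-f_ig_i)$, the ideal $I_0$ is a coideal and is stable under the antipode, hence a Hopf ideal. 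Killing all the $v_i,w_i$ collapses the two Nichols-algebra factors $\mathfrak{B}(V),\mathfrak{B}(W)$ and leaves $\Bbbk G$, after which the relations $1-f_ig_i$ impose $G/S$; thus $\mathcal{U}(\mathcal{D}_{red})/I_0\cong\Bbbk[G/S]$.

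The structural input I would record first is that $\mathcal{U}(\mathcal{D}_{red})$ is pointed with $G(\mathcal{U}(\mathcal{D}_{red}))=G$ (this uses finite-dimensionality of $\mathfrak{B}(V)$, the explicit PBW-type basis, and $\widehat G=\langle\chi_i\rangle$, so that no extra group-likes survive the identification $\chi_i=f_i^{-1}$). Consequently its nontrivial skew-primitive elements lie among $\{g-h,\ g v_i,\ g w_i\mid g,h\in G\}$. Now take $0\neq I_1$ a Hopf ideal; being a nonzero coideal it must contain a nontrivial skew-primitive, so after multiplying by a suitable group-like we may assume $I_1$ contains one of $1-g$ (with $g\neq 1$), $v_i$, or $w_i$.

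Next comes the propagation, which is where the connectedness of $M$ enters. If $1-g\in I_1$, I would use $\widehat G=\langle\chi_i\rangle$ to pick $\chi_i$ with $\chi_i(g)\neq 1$; the identity $(1-\chi_i(g))v_i=(1-g)v_i+gv_i(1-g^{-1})$, together with $gv_ig^{-1}=\chi_i(g)v_i$, forces $v_i\in I_1$. Symmetrically, if $w_i\in I_1$ then the relation $v_iw_i-\chi_i^{-1}(g_i)w_iv_i=f_ig_i-1$ yields $1-f_ig_i\in I_1$, from which a conjugation argument recovers some $v_j$. So we may assume some $v_{i_0}\in I_1$, whence $1-f_{i_0}g_{i_0}\in I_1$ by the same commutator relation. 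Using $q_{ij}=\chi_j(g_i)=\chi_i(f_j)$ one computes $\chi_j(f_ig_i)=q_{ij}q_{ji}=t_j^{m_{ij}}$, so conjugating $v_j$ and $w_j$ by $f_{i_0}g_{i_0}$ gives $(1-t_j^{\,m_{i_0j}})v_j\in I_1$ and $(1-t_j^{\,-m_{i_0j}})w_j\in I_1$. Whenever the $(i_0,j)$ entry of $M$ is nonzero modulo $n_j$, i.e.\ $t_j^{\,m_{i_0j}}\neq 1$, we deduce $v_j,w_j,1-f_jg_j\in I_1$. Since $M$ is connected, iterating this step reaches every index, so $I_0\subseteq I_1$.

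Finally, $I_0\subseteq I_1$ gives $\mathcal{U}(\mathcal{D}_{red})/I_1\cong\Bbbk[G/S]/(I_1/I_0)$, a Hopf quotient of $\Bbbk[G/S]$; conversely every Hopf quotient of $\Bbbk[G/S]$ is a quotient of $\mathcal{U}(\mathcal{D}_{red})$ through $\mathcal{U}(\mathcal{D}_{red})\twoheadrightarrow\Bbbk[G/S]$, so the two families of (proper) quotient Hopf algebras coincide. I expect the main obstacle to be the structural step --- verifying $G(\mathcal{U}(\mathcal{D}_{red}))=G$ and that a nonzero Hopf ideal must meet the space of skew-primitives --- together with matching the purely combinatorial hypothesis ``$M$ connected'' to the condition $t_j^{\,m_{i_0j}}\neq 1$ that actually powers the inductive extraction; the commutator and conjugation computations themselves are routine once these are in place.
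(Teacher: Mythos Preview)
Your proposal is correct and follows precisely the route the paper intends: its proof is the single sentence ``Using similar discussions to the proof of Theorem~\ref{thm4.4}, we obtain what we want,'' and you have accurately reconstructed what that means---the distinguished ideal $I_0=\langle 1-f_ig_i,\,v_i,\,w_i\rangle$, the skew-primitive analysis in a pointed Hopf algebra, the commutator relation $v_iw_i-\chi_i^{-1}(g_i)w_iv_i=f_ig_i-1$ to pass from $v_i$ (or $w_i$) to $1-f_ig_i$, and the conjugation step $(f_ig_i)v_j(f_ig_i)^{-1}=\chi_j(f_ig_i)v_j=q_{ij}q_{ji}v_j=t_j^{\,m_{ij}}v_j$ to propagate along nonzero entries of $M$. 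The two concerns you flag (that one implicitly needs $\widehat G=\langle\chi_i\rangle$ both to identify $G(\mathcal{U}(\mathcal{D}_{red}))=G$ and to handle the case $1-g\in I_1$, and that ``$M$ connected'' must be read with entries taken modulo $n_j$ so that $m_{ij}\neq 0$ genuinely means $t_j^{\,m_{ij}}\neq 1$) are legitimate and are exactly the points the paper glosses over by appealing to the earlier argument; they are not defects in your plan but rather standing assumptions the paper carries tacitly from the surrounding context.
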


\begin{proof}
Using similar discussions to the proof of Theorem \ref{thm4.4}, we obtain what we want.
\end{proof}

As a corollary, we have
\begin{corollary}
The modular tensor category $\Rep (H(\alpha,m,n))$ is prime and it's not tensor equivalent to $\Rep(u_q(\mathfrak{g}))$ for any complex simple Lie algebra $\mathfrak{g}$.
\end{corollary}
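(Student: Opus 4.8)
Write $K(\alpha,n)$ for the Hopf algebra $\mathcal{U}(\mathcal{D}_{red})$ of Example \ref{ex3.3} that appears in the statement; by Corollary \ref{coro4.3} it is ribbon and factorizable, so $\Rep(K(\alpha,n))$ is indeed a modular tensor category. The plan is to prove the two claims separately: primality by the quotient-Hopf-algebra reduction of the preceding theorem (following the template of Corollary \ref{coro4.2}), and the non-equivalence by a Frobenius--Perron dimension computation.

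First I would verify the hypotheses of the preceding theorem for $K(\alpha,n)$. The Nichols algebra $\mathfrak{B}(V)=\mathfrak{B}(V_2)$ is finite dimensional by Example \ref{ex3.2} (one has $\dim\mathfrak{B}(V_2)=3^4n^2$), and the associated $2\times 2$ matrix $M$ is connected because its off-diagonal entry is governed by $\chi_1(g_2)\chi_2(g_1)=\alpha^2\neq 1$. Hence the quotient Hopf algebras of $K(\alpha,n)$ agree with those of $\Bbbk[G/S]$, where $S=\langle f_ig_i\mid 1\leq i\leq 2\rangle$. In Example \ref{ex3.3} one has $f_i=g_i$, so $S=\langle g_1^2,g_2^2\rangle$; since $G=\mathbb{Z}_{3n}\times\mathbb{Z}_{3n}$ and $3n$ is odd, $\gcd(2,3n)=1$ yields $\langle g_i^2\rangle=\langle g_i\rangle$ and therefore $S=G$. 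Thus $G/S$ is trivial, $\Bbbk[G/S]=\Bbbk$, and the quotient Hopf algebras of $K(\alpha,n)$ are trivial. Since $G(K(\alpha,n))=G$, exactly the same reasoning as in Corollary \ref{coro4.2} shows that the only topologizing non-degenerate braided tensor subcategories of $\Rep(K(\alpha,n))$ are $\mathbf{vect}_{\Bbbk}$ and $\Rep(K(\alpha,n))$ itself, so $\Rep(K(\alpha,n))$ is prime.

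For the non-equivalence, the invariant I would use is the Frobenius--Perron dimension, which is preserved by any tensor equivalence and satisfies $\operatorname{FPdim}(\Rep H)=\dim H$ for a finite-dimensional Hopf algebra $H$. By Example \ref{ex3.3}, $\operatorname{FPdim}(\Rep K(\alpha,n))=\dim K(\alpha,n)=3^{10}n^6$, while $\operatorname{FPdim}(\Rep u_q(\mathfrak{g}))=\dim u_q(\mathfrak{g})=\ell^{\dim\mathfrak{g}}$, with $\ell$ the order of $q$. A tensor equivalence would force $3^{10}n^6=\ell^{\dim\mathfrak{g}}$, so $\dim\mathfrak{g}$ must divide the $3$-adic exponent $10+6v_3(n)$ and each exponent $6v_p(n)$ for $p\mid n$, $p\neq 3$. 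Whenever $3\nmid n$ and some prime divides $n$ to a power coprime to $5$ (for instance when $n>1$ is squarefree and coprime to $3$), these divisibilities force $\dim\mathfrak{g}\mid\gcd(10,6)=2$, which is impossible since $\dim\mathfrak{g}\geq 3$ for every simple Lie algebra $\mathfrak{g}$.

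The hard part is the residual case left by the dimension count, namely $\dim\mathfrak{g}=10$ (type $B_2$ or $C_2$), where $3^{10}n^6=\ell^{10}$ can be solved for special $n$. Here $\operatorname{FPdim}$ is powerless, and the group of invertible objects fails to separate the two categories (it is trivial on both sides once the relevant orders are odd, since a one-dimensional representation forces $g_i^2=1$ and hence $g_i=1$). To settle this case I would instead exploit the non-Cartan, type $T_3$ nature of $K(\alpha,n)$ against the Cartan type of $u_q(\mathfrak{g})$: the root vectors of $K(\alpha,n)$ carry two distinct nilpotency orders, $3$ and $3n$ (Example \ref{ex3.2}), whereas for odd $\ell$ all root vectors of $u_q(\mathfrak{g})$ share the single order $\ell$. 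I expect this discrepancy to be visible in a genuine tensor invariant, such as the spectrum of the ribbon twist on the simple objects or the braided vector space reconstructed from the fusion data; turning this expectation into a proof that rules out $B_2$ and $C_2$ in full generality is the main obstacle.
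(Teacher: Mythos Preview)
Your primality argument is correct and is precisely what the paper's one-line proof (``by a similar way to the proof of Corollary \ref{coro4.2}'') intends: check that the matrix $M$ is connected, apply the preceding theorem, and then verify $S=\langle g_1^2,g_2^2\rangle=G$ because $3n$ is odd, so the only quotient Hopf algebra of $K(\alpha,n)$ is $\Bbbk$ and the category is prime.

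For the non-equivalence clause, the paper gives no separate argument in this corollary; the reference to Corollary \ref{coro4.2} only covers primality. In the parallel corollary for $H(\omega,n)$ the paper does spell out a one-line dimension remark (``Because $\dim(H(\omega,n))=(2n)^8$ \dots''), and the analogous observation $\dim K(\alpha,n)=3^{10}n^6$ is presumably what is meant here as well. Your Frobenius--Perron approach is therefore exactly in the spirit of the paper, but you go further than the paper does in isolating the residual case $\dim\mathfrak{g}=10$, where $3^{10}n^6=\ell^{10}$ can indeed be solved for special $n$. The paper does not treat this case, so the obstacle you describe is not a failure to reproduce the paper's proof; rather, you have identified a point the paper leaves unaddressed. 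Your proposed route via the mismatch of nilpotency orders ($3$ versus $3n$) is a reasonable idea for closing it, but as you say it would need to be packaged as a genuine tensor invariant, and that is extra work beyond what the paper supplies.
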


\begin{proof}
By a similar way to the proof of Corollary \ref{coro4.2}, we get what we want.
\end{proof}

For the case that the order of $q_{ii}$ is not odd for some $1\leq i\leq \theta$, we introduce another theorem as follows.  Fix a reduced YD-datum $\mathcal{D}_{red}=(f_i, g_i, \chi_i)_{1\leq i\leq \theta}$, where $f_i,g_i\in G,\chi_i\in \widehat{G}$, $1\leq i \leq \theta$ such that
$$q_{ij}=\chi_j(g_i)=\chi_i(f_j),\;f_ig_i\neq 1,\;1\leq i,j \leq \theta.$$

\begin{theorem}
If the conditions of Theorem \ref{thm4.6} hold for $\mathcal{U}(\mathcal{D}_{red})$
and the matrix $\mathfrak{q}$ is connected, then $\Rep (\mathcal{U}(\mathcal{D}_{red}))$ is a modular tensor category.
\end{theorem}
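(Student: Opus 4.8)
The plan is to read the conclusion as a direct consequence of Theorem \ref{thm4.6} together with the dictionary between ribbon factorizable Hopf algebras and modular tensor categories recalled in Subsection 2.1. Under the hypotheses imposed, Theorem \ref{thm4.6} already produces the universal $\mathcal{R}$-matrix $\overline{\mathcal{R}}$ making $(\mathcal{U}(\mathcal{D}_{red}),\overline{\mathcal{R}})$ a factorizable Hopf algebra and exhibits the unique ribbon element $g_0^{-1}u$; hence $(\mathcal{U}(\mathcal{D}_{red}),\overline{\mathcal{R}})$ is a ribbon factorizable Hopf algebra. By the equivalence stated in Subsection 2.1 (see \cite[Section 2.5]{Shi1}), for a quasitriangular Hopf algebra $(H,R)$ the braided finite tensor category $\Rep(H)$ is modular exactly when $(H,R)$ is ribbon factorizable; applying this with $H=\mathcal{U}(\mathcal{D}_{red})$ and $R=\overline{\mathcal{R}}$ finishes the modularity claim. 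Notice that this argument uses nothing beyond the output of Theorem \ref{thm4.6}.

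Since connectedness of $\mathfrak{q}$ plays no role in the step above, I would use it to upgrade the statement to the (expected) finer conclusion that $\Rep(\mathcal{U}(\mathcal{D}_{red}))$ is \emph{prime}, following the template of the proof of Theorem \ref{thm4.4}. First I would record that $\mathcal{U}(\mathcal{D}_{red})$ is pointed with $G(\mathcal{U}(\mathcal{D}_{red}))=G$ (using $\widehat G=\langle\chi_i\rangle$ from Theorem \ref{thm4.6}), so that the only skew-primitive elements, up to multiplication by grouplikes, are $g-h$, $gv_i$, and $gw_i$. A nonzero Hopf ideal $I_1$ must contain a skew-primitive; separating grouplikes by means of $\widehat G=\langle\chi_i\rangle$ and using the conjugation relations \eqref{eq2.7}, I would force some $v_i$ or $w_i$ into $I_1$, and then the relation \eqref{eq2.8}, namely $v_iw_j-\chi_j^{-1}(g_i)w_jv_i=\delta_{i,j}(f_ig_i-1)$, yields $f_ig_i-1\in I_1$ for that index. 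The key propagation step is that conjugation by the grouplike $f_ig_i$ scales $v_j$ (resp. $w_j$) by $q_{ij}q_{ji}=t_j^{m_{ij}}$ (resp. its inverse), so a nonzero entry $m_{ij}$ drags $v_j,w_j$ and hence $f_jg_j-1$ into $I_1$; connectedness of $\mathfrak{q}$ then chains this through every index. Consequently every proper Hopf quotient factors through $\Bbbk[G/S]$ with $S=\langle f_ig_i\rangle$, and primeness follows as in Corollary \ref{coro4.2} once $S=G$ is checked from the determinant hypotheses of Theorem \ref{thm4.6}.

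The hard part will be the propagation step in the non-Cartan setting: the matrix $\mathfrak{q}$ is only assumed to satisfy $q_{ii}\neq1$ and need not be of Cartan type (cf. Example \ref{ex3.1}), so connectivity cannot be read off a Cartan matrix and must be argued directly from $M=(m_{ij})$ and the interplay of the two Nichols subalgebras $\mathfrak{B}(V)$ and $\mathfrak{B}(W)$. Verifying that $f_ig_i$ scales $v_j$ by a nontrivial root of unity precisely when $m_{ij}\neq0$, and that this genuinely forces the whole generator into $I_1$ rather than merely a combination of it with higher-degree terms, is the delicate point; the remaining bookkeeping transcribes the $u(\mathcal{D},0,0)$ argument of Theorem \ref{thm4.4} essentially verbatim.
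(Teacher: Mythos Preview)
Your proposal is correct and follows essentially the same approach as the paper: modularity comes straight from Theorem~\ref{thm4.6}, and you rightly observe that the connectedness hypothesis is there to secure primeness via the Theorem~\ref{thm4.4} template, reducing to the fact that $G=\langle f_ig_i\mid 1\leq i\leq\theta\rangle$. The paper's own proof does exactly this (it in fact proves primeness, even though the stated conclusion is only modularity), invoking the argument of Theorem~\ref{thm4.4} after noting $G=\langle g_if_i\rangle$ from $\chi_j(g_i)=\chi_i(f_j)$ and $\widehat G=\langle\chi_i\rangle$.
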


\begin{proof}
Since Theorem \ref{thm4.6}, we only need to show $\Rep (\mathcal{U}(\mathcal{D}_{red}))$ is prime. Note that $\chi_j(g_i)=\chi_i(f_j)$ for $1\leq i\leq \theta$ and $\widehat{G}=\langle \chi_1,\cdots,\chi_\theta\;1\leq i\leq \theta\rangle$ as groups, we get $G=\langle g_if_i\;1\leq i\leq \theta\rangle$. Using similar discussion to the proof of Theorem \ref{thm4.4}, we know that all quotient Hopf algebras of $\mathcal{U}(\mathcal{D}_{red})$ are trivial. It implies that $\Rep (\mathcal{U}(\mathcal{D}_{red}))$ is prime.
\end{proof}

As a corollary, we have
\begin{corollary}
If $(n,21)=1$, then the modular tensor category $\Rep H(\omega,n)$ is prime and it's not tensor equivalent to $\Rep(u_q(\mathfrak{g}))$ for any complex simple Lie algebra $\mathfrak{g}$.
\end{corollary}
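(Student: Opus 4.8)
The plan is to prove the two assertions separately: first that $\Rep H(\omega,n)$ is prime, and then that it is not tensor equivalent to any $\Rep(u_q(\mathfrak g))$.

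For primeness I would feed $H(\omega,n)$ into the theorem immediately preceding this corollary (the one asserting that $\Rep(\mathcal U(\mathcal D_{red}))$ is prime once the hypotheses of Theorem \ref{thm4.6} hold and the braiding matrix $\mathfrak q$ is connected). Two inputs must be checked. First, that the hypotheses of Theorem \ref{thm4.6} hold for $H(\omega,n)$; this is exactly the content of Corollary \ref{coro4.4}, where $(n,21)=1$ is used to verify the coprimality conditions $(\det M,2n)=1$ and $(\det(M+M^t),2n)=1$. Second, that the braiding matrix
$$\mathfrak q=\begin{pmatrix} \omega^{-3} & \omega \\ \omega^2 & \omega^{-3}\end{pmatrix}$$
of Example \ref{ex3.1.x} is connected. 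Since $\omega$ is a primitive $2n$th root of unity with $n\geq 2$, both off-diagonal entries $\omega$ and $\omega^2$ are nonzero, so no permutation of rows and columns brings $\mathfrak q$ to block-diagonal form; hence $\mathfrak q$ is connected. With both inputs in place the cited theorem gives that $\Rep H(\omega,n)$ is a prime modular tensor category.

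For the non-equivalence I would use the Frobenius--Perron dimension as the separating invariant. Recall that for a finite-dimensional Hopf algebra $A$ one has $\operatorname{FPdim}(\Rep A)=\dim_{\Bbbk} A$, and that $\operatorname{FPdim}$ is preserved by any tensor equivalence. By Example \ref{ex3.3.x}, $\dim H(\omega,n)=(2n)^8$, so
$$\operatorname{FPdim}\bigl(\Rep H(\omega,n)\bigr)=(2n)^8,$$
which is an even integer (divisible by $2^8$). On the other hand, a small quantum group $u_q(\mathfrak g)$ yields a modular, i.e.\ ribbon factorizable, tensor category only when $q$ is a primitive root of unity of odd order $\ell$ (this is precisely the regime recovered in \cite[Example 5.17]{LC}); in that case $\dim u_q(\mathfrak g)=\ell^{\dim\mathfrak g}$ is odd. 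An even number cannot equal an odd one, so $\operatorname{FPdim}(\Rep H(\omega,n))\neq \operatorname{FPdim}(\Rep u_q(\mathfrak g))$ for every simple $\mathfrak g$, whence the two categories are not tensor equivalent.

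The routine part is primeness, which is essentially a citation once the two hypotheses above are verified. The delicate point, and the main thing to pin down carefully, is the dimension comparison: one must be certain that \emph{every} small quantum group producing a modular tensor category has odd dimension. This rests on the standard fact that $\Rep u_q(\mathfrak g)$ is non-degenerate only for $q$ of odd order (for even order a nontrivial transparent object appears, destroying factorizability), together with the dimension formula $\dim u_q(\mathfrak g)=\ell^{\dim\mathfrak g}$, which is a power of the odd number $\ell$ and stays a product of odd numbers even in the non-simply-laced case. Should one prefer to avoid the parity argument, an alternative is to exploit primeness directly: if $\Rep H(\omega,n)\cong\Rep u_q(\mathfrak g)$ then the latter is prime, and one can then compare the group of invertible objects $G(H(\omega,n)^*)$, which is controlled by the quotient-Hopf-algebra analysis used in the preceding theorem, against the corresponding group for $u_q(\mathfrak g)$.
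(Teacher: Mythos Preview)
Your proposal matches the paper's proof: primeness is obtained by feeding $H(\omega,n)$ into the preceding theorem (checking the hypotheses of Theorem~\ref{thm4.6} via $(n,21)=1$ and the connectedness of $\mathfrak q$), and non-equivalence with $\Rep(u_q(\mathfrak g))$ is argued via the dimension $\dim H(\omega,n)=(2n)^8$. The paper is terser on the second step, simply citing the dimension without spelling out the parity comparison you give.
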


\begin{proof}
Since $(n,21)=1$, we know the conditions of Theorem \ref{thm4.6} hold for $H(\omega,n)$. Hence $\Rep (H(\omega,n))$ is a prime modular tensor category. Because $\dim(H(\omega,n))=(2n)^8$ and hence it's not tensor equivalent to $\Rep(u_q(\mathfrak{g}))$ for any complex simple Lie algebra $\mathfrak{g}$.
\end{proof}

Lastly, we will recall the ribbon factorizable Hopf algebras given in \cite[Example 5.18]{LC} and compare them with our examples. Let $q$ be a primitive $2n$-th root of unity, for $n\geq 1$ an odd integer. The Hopf algebra $\text{Drin}_{K^*}(\mathfrak{B}_q, \mathfrak{B}_q^*)$ is generated by $\{x_i,y_i,k_i|\;1\leq i \leq 2\}$ as an algebra, subject to relations, for $1\leq i\neq j\leq 2$,
\begin{align*}
&k_ik_j=k_jk_i,\;k_i^{2n}=1,\;
k_ix_i=x_ik_i,\; k_iy_i=y_ik_i,\;k_ix_j =qx_jk_i,\;k_iy_j=q^{-1}y_jk_i,\\
&x_iy_j + y_jx_i = \delta_{i,j}(1-k_i),\;
x_i^2=y_i^2=0,\; (x_1x_2-x_2x_1)^{2n}=(y_2y_1-y_1y_2)^{2n}=0,
\end{align*}

The coproduct, counit and antipode are given by $\Delta(k_i) = k_i\otimes k_i$ and
\begin{align*}
\Delta(x_1) = x_1\otimes 1 + k_2^n\otimes x_1, \;\Delta(x_2)=x_2\otimes 1 + k_1^nk_2 \otimes x_2,\\
\Delta(y_1) = y_1\otimes 1 + k_1k_2^n \otimes  y_1,\;\Delta(y_2) = y_2 \otimes 1 + k_1^n \otimes y_2,\\
\epsilon(k_i)=1, \;\epsilon(x_i)=\epsilon(y_i)=0,\\
S(k_i)=k_i^{-1}, S(x_1) = -k_2^{-n}x_1,S(x_2) = -k_1^{-n}k_2^{-1}x_2,\\
S(y_1) =-k_1^{-1}k_2^{-n}y_1,\;S(y_2) = -k_1^{-n}y_2.
\end{align*}

\begin{remark}
\emph{Since the dimension of $\text{Drin}_{K^*}(\mathfrak{B}_q, \mathfrak{B}_q^*)$ is $256n^4$ (\cite[Example 5.18]{LC}) and $\dim(K(\alpha,m))=3^{10} m^6$ for some odd number $m$, their representation categories are not equivalent. Because $\text{Drin}_{K^*}(\mathfrak{B}_q, \mathfrak{B}_q^*)$ admits four different ribbon structures (\cite[Example 5.18]{LC}) and $H(\omega,n)$ only admits one ribbon structure, their representation categories are also not equivalent as braided tensor categories.}
\end{remark}

\subsection{Prime modular tensor categories $\Rep(D(G_{m,n}))$ and semisimple ribbon factorizable Hopf algebras $A(p,q)$.}
Let $m, n$ be two prime numbers such that $m\equiv 1(\text{mod}\;n)$. Assume $l\in \mathbb{N}$ satisfying $l^{n}\equiv 1(\text{mod}\;m)$ and $m \not \equiv 1(\text{mod}\;m)$. Note that we don't ask $m,n$ are odd numbers here. Define $G_{m,n}=\langle a,b|\;a^{m}=b^{n}=1,bab^{-1}=a^{l}\rangle$ as a group. Then we will show that $\Rep(D(G_{m,n}))$ are prime modular tensor categories. Before this, we need the following lemma.

\begin{lemma}\label{lem4.3}
Let $r,s$ be two distinct prime numbers. Then semisimple and factorizable Hopf algebra with dimension $rs^2$ are group algebras.
\end{lemma}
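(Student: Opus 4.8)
The plan is to pass to the representation category and argue categorically. Write $p=r$, $q=s$ for the two primes, so $\dim H=rs^{2}$. Since $H$ is semisimple over $\Bbbk$ it is cosemisimple with $S^{2}=\Id$, and because $(H,R)$ is factorizable the category $\mathcal{C}:=\Rep(H)$ is a non-degenerate braided finite tensor category (the equivalence recalled in Subsection 2.1). Being the representation category of a semisimple Hopf algebra, $\mathcal{C}$ is an \emph{integral} fusion category with $\dim(X)\in\mathbb{N}$ for every object and with $\dim(\mathcal{C})=\dim H=rs^{2}$; it is moreover pseudo-unitary, so it carries a canonical spherical structure, and together with non-degeneracy this makes $\mathcal{C}$ modular in the sense of Definition \ref{def2.3}. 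I would first record the target: it suffices to show $\mathcal{C}$ is \emph{pointed}. Indeed, if every simple object is invertible then every irreducible $H$-module is $1$-dimensional, so $H$ is commutative and hence $H\cong\Bbbk^{\,\Gamma}$ for the finite group $\Gamma=G(H^{*})$; then $\Rep(\Bbbk^{\Gamma})\cong\mathrm{Vec}_{\Gamma}$ admits a braiding only when $\Gamma$ is abelian, and in that case $\Bbbk^{\Gamma}\cong\Bbbk[\widehat{\Gamma}]$ is a group algebra.

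Next I would pin down the possible dimensions of the simple objects. In an integral modular category the quantity $\dim(\mathcal{C})/\dim(V_{i})^{2}$ is a formal codegree of the Grothendieck ring (for a modular category it equals $\sum_{k}|s_{ki}|^{2}/\dim(V_{i})^{2}$, where $s$ is the $S$-matrix), hence an algebraic integer; being rational it is an integer, so $\dim(V_{i})^{2}\mid rs^{2}$. The only square divisors of $rs^{2}$ are $1$ and $s^{2}$, so $\dim(V_{i})\in\{1,s\}$ for every simple $V_{i}$. Writing $a$ for the number of $1$-dimensional simples and $b$ for the number of $s$-dimensional ones, the identity $\sum_{i}\dim(V_{i})^{2}=\dim H$ gives $a+bs^{2}=rs^{2}$, i.e. $a=(r-b)s^{2}$. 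Since the $1$-dimensional simples form the group $G(H^{*})$, whose order $a$ divides $\dim H=rs^{2}$, I get $(r-b)\mid r$; as $r$ is prime and $a\ge 1$ forces $r-b\ge 1$, either $b=0$ (so $a=rs^{2}$ and $\mathcal{C}$ is pointed) or $b=r-1$ and $a=s^{2}$.

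It then remains to eliminate the case $a=s^{2}$, $b=r-1$, and here I would invoke the universal grading of modular categories. If $\mathcal{C}$ were non-pointed, its maximal pointed subcategory $\mathcal{C}_{\mathrm{pt}}$ (the invertibles) would have $\dim(\mathcal{C}_{\mathrm{pt}})=|G(H^{*})|=s^{2}$. For a modular category the adjoint subcategory is the Müger centralizer of the pointed part, $\mathcal{C}_{\mathrm{ad}}=C_{\mathcal{C}}(\mathcal{C}_{\mathrm{pt}})$, and non-degeneracy yields $\dim(\mathcal{C}_{\mathrm{ad}})=\dim(\mathcal{C})/\dim(\mathcal{C}_{\mathrm{pt}})=rs^{2}/s^{2}=r$. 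A fusion category of prime dimension $r$ is necessarily pointed, equivalent to $\mathrm{Vec}_{\mathbb{Z}_{r}}$, so it contains $r$ invertible objects; these are invertible in $\mathcal{C}$ and thus give an embedding $\mathbb{Z}_{r}\hookrightarrow G(H^{*})$ of a group of order $s^{2}$, which is impossible since $r\nmid s^{2}$. Hence $b=0$, $\mathcal{C}$ is pointed, and $H$ is a group algebra by the first paragraph. The main obstacle is the modular-category input of the middle step: establishing the divisibility $\dim(V_{i})^{2}\mid \dim H$ (through formal codegrees / the $S$-matrix) and the centralizer dimension count $\dim(\mathcal{C}_{\mathrm{ad}})\dim(\mathcal{C}_{\mathrm{pt}})=\dim(\mathcal{C})$, together with the justification that $\mathcal{C}$ really is modular via pseudo-unitarity; once these standard facts are in place the arithmetic of $rs^{2}$ closes the argument.
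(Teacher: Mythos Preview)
Your argument is correct and follows a genuinely different route from the paper's. The paper proceeds by a case analysis on parity: when both $r$ and $s$ are odd it quotes structure results of Etingof--Nikshych--Ostrik and Natale to see that a nontrivial semisimple Hopf algebra of dimension $rs^{2}$ cannot be simple, and then invokes the authors' earlier paper \cite{ZG} to rule out factorizability for non-simple quasitriangular Hopf algebras of that dimension; when $r=2$ or $s=2$ it instead appeals to the explicit classifications of Natale \cite{Na2} and Gelaki \cite{Ges} of nontrivial semisimple Hopf algebras of dimension $2s^{2}$ and $4r$ and checks that none of the listed algebras can be factorizable. Your proof, by contrast, is uniform and purely categorical: the key inputs are the divisibility $d_i^{2}\mid\dim\mathcal{C}$ for integral modular categories (via formal codegrees, or directly from M\"uger \cite{Mug}), the centralizer dimension identity $\dim\mathcal{D}\cdot\dim C_{\mathcal{C}}(\mathcal{D})=\dim\mathcal{C}$ for modular $\mathcal{C}$, and the Etingof--Nikshych--Ostrik fact that an integral fusion category of prime Frobenius--Perron dimension is pointed. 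This avoids any split into parity cases and does not rely on classifying semisimple Hopf algebras of dimension $pq^{2}$; the trade-off is that you import several nontrivial structural theorems about modular fusion categories that the paper does not otherwise develop or cite. One small streamlining: you never actually need the identification $\mathcal{C}_{\mathrm{ad}}=C_{\mathcal{C}}(\mathcal{C}_{\mathrm{pt}})$; it is enough to observe that $C_{\mathcal{C}}(\mathcal{C}_{\mathrm{pt}})$ is a fusion subcategory of dimension $r$, hence pointed, hence contained in $\mathcal{C}_{\mathrm{pt}}$, forcing $r\mid s^{2}$.
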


\begin{proof}
Assume $H$ is semisimple quasitriangular Hopf algebra with dimension $rs^2$ satisfying it is non-trivial. Suppose $r,s$ are odd numbers. Since \cite[Corollary 9.7]{ED}, \cite[Theorem 3.10.1]{Na2} and \cite[Theorem 0.1]{Na3}, we know $H$ is not simple Hopf algebra. Hence we can use \cite[Theorem 1.1-1.2]{ZG} to see that $H$ is not factorizable Hopf algebra. Hence there is no semisimple factorizable Hopf algebra with dimension $rs^2$ such that it is non-trivial. 

Assume $r=2$. Using \cite[Theorem 3.12.4]{Na2}, we know $H$ is isomorphic to $\mathscr{B}_0$ or $\mathscr{B}_0^*$ given in \cite[Section 1]{Na2}. Suppose $H$ is factorizable Hopf algebra, then $H$ has the property of $G(H^*)|G(H)$. But both $\mathscr{B}_0$ and $\mathscr{B}_0^*$ doesn't satisfy this property. Hence $H$ must not a factorizable Hopf algebra. For the remain case $s=2$, there are only two isomorphism classes $\mathscr{A}_0$, $\mathscr{A}_1$ given in \cite{Ges}. Using similar discussion to \cite[Proposition 4.11]{ZG}, we know that they are not factorizable Hopf algebras. Hence we have completed the proof.
\end{proof}
Using above lemma, we can obtain a family of semisimple prime modular tensor categories which are not equivalent to $\Rep(G)$ as tensor categories for any group $G$.
\begin{proposition}\label{pro4.3}
$\Rep(D(G_{m,n}))$ are prime moudular tensor categories
\end{proposition}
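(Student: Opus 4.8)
The plan is to first observe that $\Rep(D(G_{m,n}))$ is modular, and then prove primeness by contradiction: a hypothetical nontrivial factorization will be pushed down to the level of Hopf algebras and defeated by a count of invertible objects. Since $\Bbbk$ has characteristic $0$ and $G_{m,n}$ is finite, $D(G_{m,n})$ is semisimple; the Drinfel'd double of a finite group is always factorizable and carries a canonical ribbon element, so $\Rep(D(G_{m,n}))$ is a modular tensor category. Suppose it were not prime. Then there is a topologizing non-degenerate braided tensor subcategory $\mathcal{E}$ with $\mathcal{E}\neq\mathbf{vect}_\Bbbk$ and $\mathcal{E}\neq\Rep(D(G_{m,n}))$. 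By Theorem \ref{thm2.2} the M\"uger centralizer $C_{\Rep(D(G_{m,n}))}(\mathcal{E})$ is again non-degenerate, and M\"uger's decomposition theorem gives $\Rep(D(G_{m,n}))\simeq\mathcal{E}\boxtimes C_{\Rep(D(G_{m,n}))}(\mathcal{E})$. A topologizing braided tensor subcategory of $\Rep(H)$ is the representation category of a quotient Hopf algebra of $H$, and a Deligne product of representation categories is the representation category of the tensor product Hopf algebra; hence this splitting is realized by an isomorphism $D(G_{m,n})\cong H_1\otimes H_2$, where $H_1,H_2$ are semisimple (being quotients of a semisimple Hopf algebra) and factorizable (being the two non-degenerate tensor factors), with $\dim H_i=d_i>1$ and $d_1d_2=(mn)^2$.

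Each $d_i$ is then a proper divisor of $(mn)^2$, so $d_i\in\{m,n,m^2,n^2,mn,m^2n,mn^2\}$. I claim every semisimple factorizable Hopf algebra of one of these dimensions is a group algebra or a dual group algebra, so that $\Rep(H_i)$ is pointed. Indeed, the dimensions $m,n$ (prime), $m^2,n^2$ (a prime square) and $mn$ (a product of two distinct primes) are covered by the classical classification of semisimple Hopf algebras in those dimensions, while the two remaining dimensions $m^2n$ and $mn^2$ are exactly of the form $rs^2$ for distinct primes $r,s$ and are covered by Lemma \ref{lem4.3}. A dual group algebra is commutative, so its representation category is pointed, and a factorizable group algebra must have abelian underlying group (its minimal quasitriangular data lie in a commutative sub-Hopf-algebra, so the Drinfel'd map cannot be an isomorphism onto a noncommutative algebra); in either case $\Rep(H_i)$ is pointed.

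It follows that $\Rep(D(G_{m,n}))\simeq\Rep(H_1)\boxtimes\Rep(H_2)$ is pointed of dimension $(mn)^2$, whence it has exactly $(mn)^2$ invertible objects. On the other hand, the invertible objects of $\Rep(D(G_{m,n}))\simeq\mathcal{Z}(\Rep(G_{m,n}))$ are indexed by the pairs $(z,\rho)$ with $z\in Z(G_{m,n})$ and $\rho$ a one-dimensional character of $G_{m,n}$, hence number $|Z(G_{m,n})|\cdot|\widehat{G_{m,n}^{\mathrm{ab}}}|$. Since $l$ has order $n$ modulo $m$ and $\gcd(l-1,m)=1$, the group $G_{m,n}$ is centerless with $G_{m,n}^{\mathrm{ab}}\cong\mathbb{Z}_n$, so there are only $n$ invertible objects. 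As $n<(mn)^2$ this is absurd, and therefore $\Rep(D(G_{m,n}))$ is prime.

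The main obstacle is Lemma \ref{lem4.3}: the factors of dimension $m^2n$ and $mn^2$ are precisely the ones that the classical low-dimensional classifications do not reach, and without forcing them to be pointed the count collapses. A secondary technical point is the passage from the categorical M\"uger splitting to the Hopf-algebra isomorphism $D(G_{m,n})\cong H_1\otimes H_2$ with factorizable factors, since it is this translation that lets the pointedness bookkeeping be carried out through dimensions of Hopf algebras; one must check that each tensor factor inherits both semisimplicity (automatic, as a quotient) and non-degeneracy of its braiding (which is exactly the content of Theorem \ref{thm2.2}).
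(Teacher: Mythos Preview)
Your argument is correct, and it follows the paper closely through the reduction step: both proofs observe that a proper nontrivial factorizable quotient of $D(G_{m,n})$ must, after invoking Lemma~\ref{lem4.3} together with the low-dimensional classifications, be pointed (an abelian group algebra or a dual group algebra). The divergence is in how the pointed case is eliminated. The paper argues \emph{directly} that no such quotient can be factorizable: it identifies $G(D(G_{m,n})^*)$ with the characters $\{1\otimes\alpha\}$ coming from $\widehat{G_{m,n}^{\mathrm{ab}}}$, notes that for an abelian group-algebra quotient $K$ one has $G(K^*)\subseteq G(D(G_{m,n})^*)$, and checks that the canonical pairing coming from the standard $\mathcal{R}$-matrix is degenerate on this set. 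You instead invoke M\"uger's splitting to force $\Rep(D(G_{m,n}))$ itself to be pointed and then count invertibles, using $Z(G_{m,n})=1$ and $G_{m,n}^{\mathrm{ab}}\cong\mathbb{Z}_n$. Your route is slightly more categorical and avoids the explicit braiding computation; the paper's route avoids appealing to the decomposition theorem and stays entirely at the level of quotient Hopf algebras.

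Two small remarks. First, the asserted Hopf-algebra isomorphism $D(G_{m,n})\cong H_1\otimes H_2$ is stronger than what a braided tensor equivalence gives (a priori only a twist equivalence), but you do not actually use it: all you need is that $\Rep(D(G_{m,n}))\simeq\Rep(H_1)\boxtimes\Rep(H_2)$ as tensor categories, that $\dim H_i$ equals the Frobenius--Perron dimension of the corresponding factor, and that pointedness is a tensor-categorical property. Second, your justification that a factorizable group algebra has abelian underlying group is terse; the cleanest way to say it is that the Drinfel'd map $\Phi_R:H^{*\mathrm{cop}}\to H$ is a Hopf algebra map, and for $H=\Bbbk[G]$ its source $(\Bbbk^G)^{\mathrm{cop}}$ is commutative, so surjectivity forces $\Bbbk[G]$ to be commutative. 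With these clarifications your proof stands.
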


\begin{proof}
We need only to show that there is no quotient Hopf algebra of $D(G_{m,n})$ such that it is factorizable Hopf algebra. Let $K$ be a quotient Hopf algebra of $D(G_{m,n})$ such that $1< \dim(K)< \dim(H)$. Assume $K$ is not a group algebra, we claim that $K$ is not a factorizable Hopf algebra. Since $K$ is not a group algebra and \cite{Na4}, we know $\dim(K)\notin\{r,s,rs\}$. Thus $\dim(K)\in\{rs^2,r^2s\}$. By Lemma \ref{lem4.3}, $K$ is not a factorizable Hopf algebra. Thus $K$ is an abelian group algebra if $K$ is factorizable Hopf algebra. 

Note that $Z(D(G_{m,n}))\cap G(D(G_{m,n}))=\{\alpha\otimes 1|\;\alpha:G_{m,n}\rightarrow \Bbbk \text{ is a group homomorphism}\}$. Hence $G(D(G_{m,n})^*)=\{1\otimes \alpha|\;\alpha:G_{m,n}\rightarrow \Bbbk \text{ is a group homomorphism}\}$. Assume $K$ is factorizable Hopf algebra. Then $K$ is abelian group algebra due to above discussion. Let $\langle,\rangle$ be the braiding structure corresponding with the standard universal $\mathcal{R}$-matrix of $D(G_{m,n})$. Then $K$ is factorizable Hopf algebra if and only if $\langle,\rangle|_{K^*\times K^*}$ is non-degenerated. Since $G(K^*)\subseteq G(D(G_{m,n})^*)$ and $G(D(G_{m,n})^*)=\{1\otimes \alpha|\;\alpha:G_{m,n}\rightarrow \Bbbk \text{ is a group homomorphism}\}$, we know $\langle,\rangle|_{K^*\times K^*}$ is always degenerated. But this contradicts with previous result. Thus $K$ is not factorizable and we completed the proof.
\end{proof}

Next, we will apply Corollary \ref{coro3.1} to construct a family of semisimple factorizable Hopf algebras $A(p,q)$. Although $\Rep(A(p,q))$ is not prime, we will prove that $A(p,q)$ can't be obtained by the obvious way, i.e. it's not tensor product of trivial Hopf algebras (group algebras or their dual) and Drinfeld doubles. Let $\chi:\mathscr{A}_l\rightarrow \Bbbk$ be the algebra map which is determined by $\chi(e_g)=\delta_{g,b}$ and $\chi(x)=\omega$, where $\omega$ is primitive $q$-th root of unity. Then we have
\begin{proposition}\label{pro3.1.x}
The Hopf algebra $(D(\mathscr{A}_0)/I,\overline{\mathcal{R}})$ is a ribbon factorizable Hopf algebra with dimension $p^2q^3$, where $I=D(\mathscr{A}_0)\langle \chi x\rangle^{+}$.
\end{proposition}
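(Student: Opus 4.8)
The plan is to realize $\mathscr{A}_0$ as the concrete input for Corollary \ref{coro3.1} with $m=1$, deduce factorizability and the dimension from there, and then obtain the ribbon structure separately from semisimplicity. First I would record the numerics. Since $l=0$ the cocycles in Example \ref{ex2.1.5} are trivial ($\sigma=\tau=1$), so $\mathscr{A}_0=\Bbbk^G\#\Bbbk F$ is an ordinary bismash product of the matched pair $(F,G)$ with $G=\mathbb{Z}_p\rtimes\mathbb{Z}_q$ and $F=\mathbb{Z}_q$; as an abelian extension it is a semisimple (and cosemisimple) Hopf algebra of dimension $|G||F|=pq^2$. Hence $D(\mathscr{A}_0)$ is semisimple of dimension $(pq^2)^2=p^2q^4$, and every quotient Hopf algebra of it is again semisimple.

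Next I would check that $\chi x$ is the element $\chi_1 g_1$ of Corollary \ref{coro3.1} for $g_1=x\in G(\mathscr{A}_0)$ and $\chi_1=\chi\in G(\mathscr{A}_0^*)$, taking as algebra generators the idempotents $e_g\#1$ ($g\in G$) and the group-like $1\#x$. The crucial point is condition (i): using $\Delta(e_g\#1)=\sum_{g'g''=g}e_{g'}\#1\otimes e_{g''}\#1$ together with $S(e_g\#1)=e_{g^{-1}}\#1$ and $\chi(e_h\#1)=\delta_{h,b}$, one computes $\chi\rightharpoonup(e_g\#1)\leftharpoonup\chi^{-1}=e_{bgb^{-1}}\#1$, whereas in the smash product $x^{-1}(e_g\#1)x=e_{g\triangleleft x}\#1$. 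These coincide because $g\mapsto g\triangleleft x$ and conjugation by $b$ are both group homomorphisms of $G$ that agree on the generators, since $a\triangleleft x=a^{t}=bab^{-1}$ and $b\triangleleft x=b=bbb^{-1}$; the remaining generator $1\#x$ is immediate from $\chi\rightharpoonup x\leftharpoonup\chi^{-1}=\chi^{-1}(x)\chi(x)\,x=x$. Thus condition (i) holds and $\chi x\in Z(D(\mathscr{A}_0))$.

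For condition (ii) I would verify that $\chi x$ is central group-like of order exactly $q$: $x$ has order $q$, and one checks inductively that $\chi^{*k}(e_g\#1)=\delta_{g,b^k}$, so $\chi^{*q}=\epsilon$ while $\chi^{*k}\neq\epsilon$ for $0<k<q$; hence $\langle\chi x\rangle\cong\mathbb{Z}_q$. The relevant $1\times1$ bicharacter is $(\Phi_{\mathcal{R}}^{-1}(\chi x))(\chi x)=\chi(x)\chi(x)=\omega^2$, a primitive $q$-th root of unity because $q$ is odd, so Lemma \ref{lem3.4} gives non-degeneracy. Corollary \ref{coro3.1} then yields that $(D(\mathscr{A}_0)/I,\overline{\mathcal{R}})$ is factorizable with $I=D(\mathscr{A}_0)\langle\chi x\rangle^{+}$, and since $\dim\Bbbk\langle\chi x\rangle=q$ the quotient has dimension $p^2q^4/q=p^2q^3$.

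Finally, for the ribbon structure, set $K=D(\mathscr{A}_0)/I$. As $K$ is semisimple, $S_K^2=\Id$, so $S_K^4=\Id$, and therefore the canonical group-like $\mathrm{g}=uS(u^{-1})$, which satisfies $S^4(h)=\mathrm{g}h\mathrm{g}^{-1}$, lies in $G(K)\cap Z(K)$. Because $\dim K=p^2q^3$ is odd, Nichols--Zoeller forces $|G(K)|$, and in particular $d:=\ord(\mathrm{g})$, to be odd; then $l:=\mathrm{g}^{-(d+1)/2}$ is a central group-like with $l^2=\mathrm{g}^{-1}$, and since $S_K^2=\Id$ we also have $S^2(h)=h=l^{-1}hl$. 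Hence $l\in G(\mathrm{R}(K,\overline{\mathcal{R}}))$, and Theorem \ref{thm2.r} supplies the ribbon element $ul$, so $K$ is ribbon factorizable of dimension $p^2q^3$. I expect the main obstacle to be condition (i) of Corollary \ref{coro3.1}: one must correctly identify the two a priori different operations—the $\chi$-hit action on the idempotents and conjugation by $x$ in the smash product—and show both restrict to conjugation by $b$ on $G$; it is precisely cocycle-freeness ($l=0$) that keeps these computations clean, and one should also confirm that $\chi$ is a well-defined character of order exactly $q$.
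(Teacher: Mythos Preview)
Your proposal is correct and follows the same route as the paper for factorizability: both verify condition (i) of Corollary~\ref{coro3.1} by showing $\chi\rightharpoonup e_g\leftharpoonup\chi^{-1}=x^{-1}e_gx$ (you make explicit that both sides equal $e_{bgb^{-1}}$ via $a\triangleleft x=a^t=bab^{-1}$), and both observe that the $1\times 1$ pairing matrix $\chi(x)^2=\omega^2$ is non-degenerate since $q$ is odd.

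The one substantive difference is the ribbon step. The paper's proof simply writes ``Thus the conditions of Corollary~\ref{coro3.1} hold. Hence we get what we want,'' leaving the ribbon structure and the dimension count implicit. You supply both: the dimension via $\dim D(\mathscr{A}_0)/|\langle\chi x\rangle|=(pq^2)^2/q=p^2q^3$, and the ribbon element via semisimplicity. Your ribbon argument---$S_K^2=\Id$ (Larson--Radford), hence $\mathrm{g}\in G(K)\cap Z(K)$, and then Nichols--Zoeller plus odd dimension force $\ord(\mathrm{g})$ odd so that a central square root exists, feeding Theorem~\ref{thm2.r}---is a clean and self-contained completion that the paper does not spell out. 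Note that the paper's own ribbon machinery (Propositions~\ref{lem3.r1}--\ref{lem3.r2}, Theorems~\ref{thm3.rf}--\ref{thm3.4}) all require $G(K)\cap Z(K)=\{1\}$, which fails here (cf.\ the proof of Corollary~\ref{coro4.1}), so your semisimplicity argument is genuinely the right way to close the gap.
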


\begin{proof}
Directly, we have $\chi \rightharpoonup e_g\leftharpoonup \chi^{-1}=x^{-1}e_g x$ and the matrix $(\chi(x)^{2ij})_{1\leq i,j\leq q}$ is non-degenerated. Thus the conditions of Corollary \ref{coro3.1} hold. Hence we get what we want.
\end{proof}

\begin{remark}\label{rk2}
\emph{Let $p, q$ be two odd prime numbers such that $p\equiv 1(\text{mod}\;q)$. For convenience, the Hopf algebra $D(\mathscr{A}_0)/I$ above will be denoted by $A(p,q)$ and we will discuss it in the next subsection.}
\end{remark}

Then we will describe $A(p,q)$ by using generators and relations. Moreover, we give their universal $\mathcal{R}$-matrices which make them to be ribbon factorizable Hopf algebras explicitly.

\begin{proposition}\label{thm2.x}
The semisimple Hopf algebra $A(p,q)$ is generated by $\{x,y,z_i,e_g|\;1\leq i \leq q,\;g\in G\}$ as algebra, with the relations
$$
yx=xy^t,\;z_ix=xz_i,\;e_gx=xe_{g\triangleleft x},\;z_iy=yz_i,$$

$$
 e_gy=y\sum_{1\leq i\leq q} z_i (e_{a^{-1}\triangleleft x^i g a}),\;e_gz_i=z_ie_g,\;x^q=y^p=1,\;z_iz_j=\delta_{i,j}z_i,\;e_ge_h=\delta_{g,h}e_g.$$
The quasitriangular structure on $A(p,q)$ given by Proposition \ref{pro3.1.x} is
\begin{equation}\label{eqr}
\overline{\mathcal{R}}=\sum_{1\leq i \leq p,1\leq j,k \leq q}\omega^{jk} e_{a^ib^j}x^k\otimes y^i z_k x^{j}.
\end{equation}

The coproduct, counit, involution and antipode are given by
$$
\Delta(x) = x\otimes x, \;\Delta(y) =\sum_{1\leq i\leq q} y^{t^i}\otimes yz_i,
$$

$$
\Delta(z_i) =\sum_{1\leq j\leq q} z_j \otimes z_{i-j},\;\Delta(e_g) =\sum_{h\in G} e_h \otimes e_{h^{-1}g},
$$

$$
\epsilon(x)=\epsilon(y)=1, \;\epsilon(z_i)=\delta_{i,0},\;\epsilon(e_g)=\delta_{g,1},
$$

$$
S(x) =x^{-1},\; S(y) =\sum_{1\leq i\leq q}y^{-t^{-i}}z_i, \;S(z_i) =z_{-i},\;S(e_g) =e_{g^{-1}}.
$$
\end{proposition}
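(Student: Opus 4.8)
The plan is to realize $A(p,q)=D(\mathscr{A}_0)/I$ by transporting the bismash structure of $\mathscr{A}_0$ and its dual through the Drinfel'd double construction and then imposing the single relation $\chi x=1$. First I would fix the identifications of the generators. Since $l=0$, the cocycles $\sigma,\tau$ of $\mathscr{A}_0$ are trivial, so $\mathscr{A}_0=\Bbbk^G\#\Bbbk F$ is the plain bismash product; as a coalgebra it is $\Bbbk^G\otimes\Bbbk F$, hence $\mathscr{A}_0^*\cong\Bbbk G\otimes\Bbbk^F$ as algebras. I would then set $e_g:=e_g\#1$ and $x:=\sum_{h}e_h\#x$ inside $\mathscr{A}_0$, and $y:=a\otimes 1_F$ and $z_i:=1_G\otimes\delta_{x^i}$ inside $\mathscr{A}_0^*$, and record that the group-like $\chi$ of Proposition \ref{pro3.1.x} is $\chi=b\otimes\phi_\omega$, where $\phi_\omega\in\Bbbk^F$ is the character $x\mapsto\omega$. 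Because $D(H)$ is generated by $H$ together with $H^*$, and $\mathscr{A}_0$ (resp. $\mathscr{A}_0^*$) is generated by $\{e_g,x\}$ (resp. by $\{y,z_i\}$), the set $\{x,y,z_i,e_g\}$ generates $D(\mathscr{A}_0)$ and a fortiori $A(p,q)$. In the quotient the relation $\chi x=1$ reads $\chi=x^{-1}$, which is what lets $\chi$ disappear from the list of generators.

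The relations split into three types. Those internal to $\mathscr{A}_0$ ($e_ge_h=\delta_{g,h}e_g$, $x^q=1$, $e_gx=xe_{g\triangleleft x}$) and those internal to $\mathscr{A}_0^*$ ($z_iz_j=\delta_{i,j}z_i$, $y^p=1$, $z_iy=yz_i$, $\sum_iz_i=1$) are immediate from the bismash multiplication of Definition \ref{def2.1.2} and its dual. The cross relations, in which one factor comes from each tensor slot, are the substance of the computation and I would obtain them from the double's multiplication rule $(f\otimes h)(g\otimes k)=f[h_{(1)}\rightharpoonup g\leftharpoonup S^{-1}(h_{(3)})]\otimes h_{(2)}k$, together with the antipode $S(e_h\#f)=e_{(h\triangleleft f)^{-1}}\#f^{-1}$ and the action $a\triangleleft x^i=a^{t^i}$. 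For instance, writing $\Delta^2(e_g)=\sum_{g_1g_2g_3=g}e_{g_1}\otimes e_{g_2}\otimes e_{g_3}$ and using $\langle e_{g_1}\rightharpoonup y\leftharpoonup S^{-1}(e_{g_3}),\,c\rangle=\langle y,S^{-1}(e_{g_3})\,c\,e_{g_1}\rangle$ forces $g_3=a^{-1}$ and yields the stated $e_gy=y\sum_iz_ie_{(a^{-1}\triangleleft x^i)ga}$; the same mechanism produces $yx=xy^t$, $z_ix=xz_i$ and $e_gz_i=z_ie_g$.

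For the coalgebra structure I would use that in $D(\mathscr{A}_0)=(\mathscr{A}_0^*)^{cop}\otimes\mathscr{A}_0$ the coproduct on the $\mathscr{A}_0$-slot is the bismash coproduct, giving $\Delta(x)=x\otimes x$ and $\Delta(e_g)=\sum_h e_h\otimes e_{h^{-1}g}$, while the coproduct on the $\mathscr{A}_0^*$-slot is the opposite of the one dual to the multiplication of $\mathscr{A}_0$; dualizing the rule $(e_g\#x^m)(e_{g'}\#x^n)=\delta_{g\triangleleft x^m,g'}e_g\#x^{m+n}$ gives $\Delta(y)=\sum_iy^{t^i}\otimes yz_i$ and $\Delta(z_i)=\sum_jz_j\otimes z_{i-j}$. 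The counit values and the antipode formulas ($S(x)=x^{-1}$, $S(e_g)=e_{g^{-1}}$, $S(y)=\sum_iy^{-t^{-i}}z_i$, $S(z_i)=z_{-i}$) follow from the corresponding data for $\mathscr{A}_0$ and $\mathscr{A}_0^*$ and the antipode of the double. I would then check that all of these descend along $D(\mathscr{A}_0)\to A(p,q)$, which is automatic once $\chi x\in Z(D(\mathscr{A}_0))\cap G(D(\mathscr{A}_0))$ as established in Proposition \ref{pro3.1.x}.

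Finally, for the universal $\mathcal{R}$-matrix I would start from the standard $\mathcal{R}=\sum_{g\in G,\,0\le m<q}(e_g\#x^m)\otimes(g\otimes\delta_{x^m})$ of $D(\mathscr{A}_0)$, expand $g=a^ib^j$, rewrite the second leg as $a^ib^j\otimes\delta_{x^m}=\omega^{-jm}\,y^i\chi^jz_m$, and substitute $\chi=x^{-1}$; after reindexing and using $z_mx=xz_m$ this collapses to the expression \eqref{eqr}. To see that the stated data constitute a \emph{complete} presentation rather than merely valid identities, I would use the cross relations to put an arbitrary monomial into the normal form $e_g\,y^iz_kx^m$ with $g\in G$, $0\le i<p$, $0\le k,m<q$; this spanning set has cardinality $pq\cdot p\cdot q\cdot q=p^2q^3=\dim A(p,q)$ by Proposition \ref{pro3.1.x}, so it is a basis and the presentation is exact. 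The hard part will be the bookkeeping of the matched-pair twist $\triangleleft$ and the $\omega$-factors, concentrated in the relation $e_gy=y\sum_iz_ie_{(a^{-1}\triangleleft x^i)ga}$ and in the rewriting of $\mathcal{R}$ through $\chi=x^{-1}$, where a convention choice for the dual basis must be pinned down to land exactly on \eqref{eqr}.
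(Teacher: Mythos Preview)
Your approach is essentially the same as the paper's: both identify $y,z_i,\chi$ inside $\mathscr{A}_0^*$ via the dual basis $E_{g;x^i}$ (your $a\otimes 1_F$, $1_G\otimes\delta_{x^i}$, $b\otimes\phi_\omega$ are exactly the paper's $Y=\sum_jE_{a;x^j}$, $Z_i=E_{1;x^i}$, $\chi=\sum_j\omega^jE_{b;x^j}$), read off the internal relations and coproducts from the bismash structure, compute the cross relations from the double multiplication, and rewrite $\mathcal{R}$ using $E_{a^ib^j;x^k}=\omega^{-jk}y^i\chi^jz_k$ followed by the substitution $\chi=x^{-1}$. The only noteworthy difference is that the paper derives $yx=xy^t$ and $z_ix=xz_i$ by combining the $\mathscr{A}_0^*$-internal relations $\chi Y=Y^t\chi$, $\chi Z_i=Z_i\chi$ with $\chi=x^{-1}$, whereas you propose to get them directly from the cross multiplication in $D(\mathscr{A}_0)$; both routes are valid. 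Your final dimension count to confirm that the listed relations form a complete presentation is a useful addition that the paper does not make explicit.
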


\begin{remark}
\emph{From this proposition, we know that the Hopf subalgebra $B=\langle z_i|\;1\leq i\leq l \rangle$ belongs to center of $A(p,q)$. Hence $(A(p,q)/I,\overline{R})$ give a quotient Hopf algebra, where $I=A(p,q)B^{+}$. Moreover, we can see that $(A(p,q)/I,\overline{R})$ is factorizable Hopf algebra. Now applying decomposition theorem in \cite{LC}, we know that $\Rep A(p,q)=\Rep(B)\boxtimes \Rep(A(p,q)/I)$ as modular tensor categories. Since dimension reason, $\Rep(B)$ is prime. To see $\Rep(A(p,q)/I)$ is also a prime modular tensor category, note that $(A(p,q)/I,\overline{R})$ is actually isomorphic to $(D(G_{p,q}),\mathcal{R})$ by direct computation, and hence $\Rep(A(p,q)/I)$ is prime by Proposition \ref{pro4.3}. Hence we have given a decomposition for $\Rep A(p,q)$.}
\end{remark}
To show above proposition, we introduce some lemmas as follows. Assume $\{E_{g;x^i}|\;g\in G, 1\leq i\leq q\}$ is the dual basis of $\{e_{g}x^i|\;g\in G, 1\leq i\leq q\}$ for $\mathscr{A}_0^*$, i.e. $E_{g;x^i}(e_h x^j)=\delta_{g,h}\delta_{i,j}$ for $g,h\in G$ and $1\leq i,j\leq q$. Let $\omega$ be a primitive $q$-th root of unity. To find algebraic generator of $\mathscr{A}_0^*$, we define $Y,\;Z_i(1\leq i\leq q),\;\chi$ as follows: $$Y=\sum_{1\leq j\leq q}E_{a;x^j},\;Z_i=E_{1,x^i},\;\chi=\sum_{1\leq j\leq q}\omega^jE_{b;x^j}.$$

\begin{lemma}\label{lem3.2.x}
The following statements hold for $\mathscr{A}_0^*$
\begin{itemize}
  \item[(i)] $(E_{g;x^i})(E_{h;x^j})=\delta_{i,j}E_{gh;x^i}$,
  \item[(ii)]  $\Delta(E_{g;x^i})=\sum_{1\leq j\leq q} E_{g;x^j}\otimes E_{g\triangleleft x^j;x^{i-j}}$,
  \item[(iii)] $E_{a^ib^j;x^k}=\omega^{-jk}Y^i \chi^j Z_k$.
\end{itemize}
\end{lemma}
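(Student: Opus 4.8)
The plan is to work entirely from the explicit Hopf structure of $\mathscr{A}_0 = \Bbbk^G \#_{\sigma,\tau} \Bbbk F$ recorded in Example \ref{ex2.1.5} and Definition \ref{def2.1.2}. Taking $l=0$ kills both cocycles, so $\sigma \equiv 1$ and $\tau \equiv 1$; combined with the triviality of the action $\triangleright$, the product and coproduct collapse to
\[
(e_g x^m)(e_h x^n) = \delta_{g \triangleleft x^m,\, h}\, e_g x^{m+n},
\qquad
\Delta(e_g x^m) = \sum_{g'g''=g} e_{g'} x^m \otimes e_{g''} x^m,
\]
where all $x$-exponents are read modulo $q$. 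Every assertion will then follow by pairing against the basis $\{e_h x^n\}$ through $\langle E_{g;x^i}, e_h x^j\rangle = \delta_{g,h}\delta_{i,j}$.

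For (i) I would use that the product on $\mathscr{A}_0^*$ is dual to the coproduct on $\mathscr{A}_0$: evaluating $E_{g;x^i} E_{h;x^j}$ on $e_k x^m$ produces $\sum_{k'k''=k} \langle E_{g;x^i}, e_{k'}x^m\rangle\,\langle E_{h;x^j}, e_{k''}x^m\rangle$, where the two deltas in the $x$-index force $i = m = j$ and the $G$-deltas force $k' = g$, $k'' = h$, hence $k = gh$; this is exactly $\delta_{i,j}\langle E_{gh;x^i}, e_k x^m\rangle$. For (ii) I would dualize the other way, so that the coproduct on $\mathscr{A}_0^*$ is dual to the product on $\mathscr{A}_0$. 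Pairing $\Delta(E_{g;x^i})$ with $e_k x^m \otimes e_l x^n$ gives $\langle E_{g;x^i}, (e_k x^m)(e_l x^n)\rangle = \delta_{k\triangleleft x^m,\, l}\,\delta_{g,k}\,\delta_{i,\,m+n}$; pairing the proposed right-hand side with the same tensor and summing over $j$, the factor $\delta_{j,m}$ selects $j=m$ and leaves $\delta_{g,k}\,\delta_{g\triangleleft x^m,\, l}\,\delta_{i-m,\,n}$. Since $g=k$ and $i-m \equiv n$ is the same constraint as $i \equiv m+n$, the two expressions agree on all basis tensors.

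For (iii) I would exploit the ``diagonal'' multiplication rule just proved in (i): $E_{g;x^i}E_{h;x^j}$ vanishes unless $i=j$, in which case it equals $E_{gh;x^i}$. A one-line induction then gives $Y^i = \sum_m E_{a^i; x^m}$ and $\chi^j = \sum_m \omega^{jm} E_{b^j; x^m}$, the character $\chi$ supplying the extra factor $\omega^{jm}$. Multiplying and forcing the $x$-indices to coincide yields $Y^i \chi^j = \sum_m \omega^{jm} E_{a^i b^j; x^m}$, and right-multiplication by $Z_k = E_{1;x^k}$ collapses the sum to the single term $m=k$, so $Y^i \chi^j Z_k = \omega^{jk} E_{a^i b^j; x^k}$. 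Solving for $E_{a^i b^j; x^k}$ produces the stated identity; here the non-commutativity of $G$ makes the order of $Y^i$ before $\chi^j$ essential.

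None of the three parts is a genuine obstacle: each reduces to a finite bookkeeping computation once the trivial cocycles and the trivial $\triangleright$-action are inserted. The only point demanding real care is the consistent treatment of $x$-exponents modulo $q$ together with the appearance of the right action $\triangleleft$ in (ii)---precisely the combinatorial data that reappears when I read off the generators-and-relations description of $A(p,q)$ in Proposition \ref{thm2.x}.
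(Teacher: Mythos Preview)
Your proof is correct and follows essentially the same approach as the paper: dualizing the coproduct of $\mathscr{A}_0$ for (i), dualizing the product for (ii), and then using the multiplication rule from (i) to identify $Y^i\chi^jZ_k$ for (iii). The paper's own proof is terser---it simply records the relevant evaluations and for (iii) states only ``Due to (i), we know $Y^i\chi^jZ_k=\omega^{jk}E_{a^ib^j;x^k}$''---whereas you spell out the intermediate inductions $Y^i=\sum_m E_{a^i;x^m}$ and $\chi^j=\sum_m\omega^{jm}E_{b^j;x^m}$, but the underlying argument is identical.
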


\begin{proof}
By definition of $\mathscr{A}_0$, we have $\Delta(e_kx^l)=\sum_{u\in G}e_ux^l\otimes e_{u^{-1}k}x^l$. This implies $(E_{g;x^i}E_{h;x^j})(e_kx^l)=\delta_{i,j}\delta_{gh,k}\delta_{i,l}$. Hence we have (i).

Since $(e_hx^j)(e_kx^l)=(\delta_{h,k\triangleleft x^{-j}}e_h x^{j+l})$, we get
$E_{g;x^i}[(e_hx^j)(e_kx^l)]=\delta_{h,k\triangleleft x^{-j}}\delta_{g,h}\delta_{i,j+l}$. This implies (ii).

Due to (i), we know $Y^i \chi^j Z_k=\omega^{jk}E_{a^ib^j;x^k}$. And so (iii) holds.
\end{proof}
As a corollary of Lemma \ref{lem3.2.x}, we have
\begin{corollary}\label{lem3.3.x}
The Hopf algebra $\mathscr{A}_0^*$ is generated by $\{Y,Z_i,\chi|\;1\leq i\leq q\}$ as algebra, with relations
$$
Z_iY=YZ_i,\;\chi Y=Y^t \chi,\;\chi Z_i=Z_i\chi,\;Y^p=\chi^q=1,\;Z_iZ_j=\delta_{i,j}Z_i,$$
The coproduct, counit, involution and antipode are given by
$$
\Delta(Y) = \sum_{1\leq i\leq q} YZ_i\otimes Y^{t^i}, \;\Delta(Z_i) =\sum_{1\leq j\leq q} Z_j\otimes Z_{i-j},\;\Delta(\chi) =\chi\otimes \chi,
$$

$$
\epsilon(Y)=\epsilon(\chi)=1, \;\epsilon(Z_i)=\delta_{i,0},
$$

$$
S(Y) =\sum_{1\leq i\leq q}Y^{-t^{-i}}Z_i, \;S(Z_i) =Z_{-i},\;S(\chi) =\chi^{-1}.
$$
\end{corollary}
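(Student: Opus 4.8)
The plan is to derive every structural formula for $\mathscr{A}_0^*$ directly from Lemma \ref{lem3.2.x}, treating the corollary as a bookkeeping exercise in the dual basis $\{E_{g;x^i}\}$. First I would record that by Lemma \ref{lem3.2.x}(iii) each dual basis vector satisfies $E_{a^ib^j;x^k}=\omega^{-jk}Y^i\chi^jZ_k$; since $\{E_{a^ib^j;x^k}\mid 0\le i<p,\,0\le j<q,\,1\le k\le q\}$ is a basis of $\mathscr{A}_0^*$, the set $\{Y^i\chi^jZ_k\}$ with the same index ranges is again a basis. In particular $\{Y,\chi,Z_i\}$ generate $\mathscr{A}_0^*$ as an algebra, any identity can be checked on this explicit basis, and the dimension is pinned at $pq^2=\dim\mathscr{A}_0$, so that the listed relations form a full presentation once verified.

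Next I would verify the algebra relations from Lemma \ref{lem3.2.x}(i), which reduces multiplication of the $E_{g;x^i}$ to the group law on $G=\mathbb{Z}_p\rtimes\mathbb{Z}_q$. Expanding $Y=\sum_jE_{a;x^j}$, $\chi=\sum_j\omega^jE_{b;x^j}$, $Z_i=E_{1;x^i}$ and applying (i), the relations $Z_iY=YZ_i$, $\chi Z_i=Z_i\chi$ and $Z_iZ_j=\delta_{i,j}Z_i$ are immediate, while $\chi Y=Y^t\chi$ is exactly the relation $ba=a^tb$ read off in the dual. The order relations follow by noting that the unit of $\mathscr{A}_0^*$ is the counit $\epsilon_{\mathscr{A}_0}=\sum_iE_{1;x^i}=\sum_iZ_i$ (which also prevents any $Z$-free monomials from surviving): from (i) one gets $Y^m=\sum_kE_{a^m;x^k}$ and $\chi^m=\sum_k\omega^{mk}E_{b^m;x^k}$, so $a^p=1$ gives $Y^p=\sum_kZ_k=1$ and $b^q=1,\ \omega^q=1$ give $\chi^q=1$.

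For the coalgebra structure I would dualize using Lemma \ref{lem3.2.x}(ii), where the right action $a\triangleleft x^j=a^{t^j}$, $b\triangleleft x^j=b$ (trivial $\triangleright$) enters. Short computations give $\Delta(\chi)=\chi\otimes\chi$ and $\Delta(Z_i)=\sum_jZ_j\otimes Z_{i-j}$ at once, since $b$ and $1$ are $\triangleleft$-fixed, whereas the twisted formula $\Delta(Y)=\sum_iYZ_i\otimes Y^{t^i}$ comes from $\Delta(E_{a;x^k})=\sum_jE_{a;x^j}\otimes E_{a^{t^j};x^{k-j}}$ together with $E_{a;x^j}=YZ_j$ and $\sum_mE_{a^{t^j};x^m}=Y^{t^j}$. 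The counit values $\epsilon(Y)=\epsilon(\chi)=1$ and $\epsilon(Z_i)=\delta_{i,0}$ are just evaluations at the unit $1_{\mathscr{A}_0}=\sum_ge_gx^q$.

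The antipode formulas are the fiddliest part and the one I expect to be the main obstacle, since they require inverting the twisted action. Using $S_{\mathscr{A}_0}(e_gx^i)=e_{(g\triangleleft x^i)^{-1}}x^{-i}$ and $S_{\mathscr{A}_0^*}=(S_{\mathscr{A}_0})^*$, the fact that $\triangleleft x^j$ acts as the group automorphism $a^rb^s\mapsto a^{rt^j}b^s$ shows that $(g\triangleleft x^j)^{-1}=a$ forces $g=a^{-t^{-j}}$, which is precisely what produces the exponent $-t^{-i}$ in $S(Y)=\sum_iY^{-t^{-i}}Z_i$; the computations for $S(Z_i)=Z_{-i}$ and $S(\chi)=\chi^{-1}$ are analogous but simpler. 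Carefully matching the resulting functionals against $Y^{-t^{-i}}Z_i=E_{a^{-t^{-i}};x^i}$ and its analogues on the basis $\{e_gx^j\}$ completes the verification, and collecting the three groups of identities yields the corollary.
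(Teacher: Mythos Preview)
Your proposal is correct and follows essentially the same approach as the paper: both derive the presentation of $\mathscr{A}_0^*$ entirely from Lemma~\ref{lem3.2.x} by computing in the dual basis $\{E_{g;x^i}\}$. The paper's proof is terser, declaring that ``the only non-trivial things'' are the identities $\chi Y=Y^t\chi$, $\Delta(Y)=\sum_i YZ_i\otimes Y^{t^i}$, and $S(Y)=\sum_i Y^{-t^{-i}}Z_i$, and then checking exactly those three; your outline covers all the relations explicitly, but the method and the key computations (the group relation $ba=a^tb$ for $\chi Y=Y^t\chi$, the formula $E_{a;x^j}=YZ_j$ together with $\sum_k Z_k=1$ for $\Delta(Y)$, and inverting the $\triangleleft$-action for $S(Y)$) are identical.
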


\begin{proof}
By Lemma \ref{lem3.2.x}, the only non-trivial things are the following equalities:
$$\chi Y=Y^t \chi,\;\Delta(Y) = \sum_{1\leq i\leq q} YZ_i\otimes Y^{t^i},\;S(Y) =\sum_{1\leq i\leq q}Y^{-t^{-i}}Z_i.$$

Since Lemma \ref{lem3.2.x}, we have $\chi Y=\sum_{1\leq i\leq q}\omega^i E_{ba;x^i}$ and $Y^t\chi=\sum_{1\leq i\leq q}\omega^i E_{a^tb;x^i}$. By definition of $G$, $ba=a^tb$ and hence $\chi Y=Y^t \chi$.

Using Lemma \ref{lem3.2.x} again, we have $\Delta(Y) = \sum_{1\leq i,j\leq q} E_{a;x^j}\otimes E_{a^{t^j};x^{i-j}}$. Note that $E_{a;x^j}=YZ_j$ and $E_{a^{t^j};x^{i-j}}=Y^{t^j}Z_{i-j}$ by Lemma \ref{lem3.2.x}, thus $\Delta(Y) = \sum_{1\leq i,j\leq q} YZ_j\otimes Y^{t^j}Z_{i-j}$. Since $\sum_{1\leq j\leq q}Z_{i-j}=1$, we know $\Delta(Y) = \sum_{1\leq i\leq q} YZ_i\otimes Y^{t^i}$.

By definition, $E_{a;x^i}(S(e_g x^j))=E_{a;x^i}(e_{g\triangleleft x^j} x^{-j})=\delta_{g,a\triangleleft x^i}\delta_{j,-i}$. Hence $S(E_{a;x^i})=E_{a\triangleleft x^i;x^{-i}}$. Note that $E_{a\triangleleft x^i;x^{-i}}=Y^{t^i}Z_{-i}$, thus $S(Y) =\sum_{1\leq i\leq q}Y^{-t^{-i}}Z_i$.
\end{proof}
Now we can show proposition \ref{thm2.x}. For simplicity, we write $a$ as $a+I$ in $A(p,q)$.

\textbf{Proof of Proposition \ref{thm2.x}.}
Denote $y=Y$ and $z_i=Z_i$. By definition of $A(p,q)$, we have $\chi=x^{-1}$. Combine this with Corollary \ref{lem3.3.x}, we know $A(p,q)$ is generated by $\{x,y,z_i,e_g|\;1\leq i \leq q,\;g\in G\}$ as algebra. Let $\mathcal{R}$ be the standard universal $\mathcal{R}$-matrix of $D(\mathscr{A}_0)$. Because $\{e_{a^ib^j}x^k|;1\leq i \leq p,1\leq j,k \leq q\}$ and $\{E_{a^ib^j;x^k}|;1\leq i \leq p,1\leq j,k \leq q\}$ are dual basis, we get $\mathcal{R}=\sum_{1\leq i \leq p,1\leq j,k \leq q}e_{a^ib^j}x^k\otimes E_{a^ib^j;x^k}$. By Lemma \ref{lem3.2.x}, we have $E_{a^ib^j;x^k}=\omega^{-jk}Y^i \chi^j Z_k$ in $D(\mathscr{A}_0)$. Since $y=Y,\;z_i=Z_i$ and $\chi=x^{-1}$ in $A(p,q)$, we know the $\overline{\mathcal{R}}$ is given by the equation \eqref{eqr}.

To complete the proof, due to Corollary \ref{lem3.3.x} and the definition of $\mathscr{A}_0$, we only need to prove the following equalities :
$$yx=xy^t,\;z_ix=xz_i,\;e_gy=y\sum_{1\leq i\leq q} z_i (e_{a^{-1}\triangleleft x^i g a}),\;e_gz_i=z_ie_g.$$

By Corollary \ref{lem3.3.x}, we have $\chi Y=Y^t \chi$ and $Z_i \chi= \chi Z_i$. Since $\chi=x^{-1}$ in $A(p,q)$, we get $yx=xy^t,\;z_ix=xz_i$. By definition of $A(p,q)$, we get
$$e_g Y=\sum_{1\leq i\leq q}\sum_{hkl=g}(e_h\rightharpoonup E_{a;x^i}\leftharpoonup e_{l^{-1}})e_k.$$
Since $(e_h\rightharpoonup E_{a;x^i}\leftharpoonup e_{l^{-1}})(e_s x^r)=\delta_{s,a}\delta_{r,i}\delta_{l,a^{-1}}\delta_{h,a\triangleleft x^i}$, we get $(e_h\rightharpoonup E_{a;x^i}\leftharpoonup e_{l^{-1}})=\delta_{l,a^{-1}}\delta_{h,a\triangleleft x^i}E_{a;x^i}$. Note that $E_{a;x^i}=YZ_i$, hence $e_gy=y\sum_{1\leq i\leq q} z_i (e_{a^{-1}\triangleleft x^i g a})$. Similarly, we have $e_gZ_i=Z_ie_g$. And so $e_gz_i=z_ie_g$.
\qed

We call a Hopf algebra \emph{trivial} if it is a group algebra or its dual. Then we will prove the following theorem.
\begin{theorem}\label{pro4.2.x}
The ribbon factorizable Hopf algebra $A(p,q)$ is not tensor product of trivial Hopf algebras and Drinfel'd doubles.
\end{theorem}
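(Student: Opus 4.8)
The plan is to argue by contradiction, using the orders of the group-like groups $G(-)$ and $G((-)^{*})$ together with a $p$-adic dimension count. Suppose that
$A(p,q)\cong\bigl(\bigotimes_{i}T_i\bigr)\otimes\bigl(\bigotimes_{j}D(L_j)\bigr)$
as Hopf algebras, where each $T_i$ is a group algebra $\Bbbk\Gamma_i$ or a dual $\Bbbk^{\Gamma_i}$ and each $D(L_j)$ is a Drinfel'd double. Since $A(p,q)$ is semisimple (being a quotient of the double of the semisimple, cosemisimple Hopf algebra $\mathscr{A}_0$), every tensor factor is semisimple, and in particular each $L_j$ is a semisimple Hopf algebra. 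I would exploit three standard facts: for a tensor product $G(H\otimes H')=G(H)\times G(H')$ (and likewise for the duals); for a double $G(D(L))\cong G(L^{*})\times G(L)$, so $|G(D(L))|=|G(L)|\,|G(L^{*})|$; and $\dim D(L_j)=(\dim L_j)^{2}$ is a perfect square.

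First I would pin down the group-likes of $A(p,q)$. A direct computation from the presentation in Proposition \ref{thm2.x} (the grouplikes are generated by $x$, by the characters $\sum_g c_g e_g$ with $c\in\widehat{G}$, and by the images of the grouplikes of $\mathscr{A}_0^{*}$ built from $\chi$ and the $z_i$) gives $|G(A(p,q))|=q^{3}$ and $|G(A(p,q)^{*})|=q^{2}$; the only feature I need is that \emph{both orders are coprime to $p$}. Feeding this into $G(A(p,q))=\prod_i G(T_i)\times\prod_j G(D(L_j))$ and the analogous identity for the dual, each of the factors $|G(T_i)|,\,|G(T_i^{*})|,\,|G(D(L_j))|,\,|G(D(L_j)^{*})|$ is forced to be coprime to $p$. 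For a group-algebra factor $\Bbbk\Gamma_i$ this makes $|\Gamma_i|=|G(T_i)|$ a power of $q$; for a dual factor $\Bbbk^{\Gamma_i}$ it makes $|\Gamma_i|=|G(T_i^{*})|$ a power of $q$. Hence \emph{every} trivial factor has dimension a power of $q$, and moreover $p\nmid|G(L_j)|\,|G(L_j^{*})|$ for all $j$.

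Now I would compare $p$-adic valuations. Since each $\dim T_i$ is prime to $p$ and $\dim D(L_j)=(\dim L_j)^{2}$, the identity $p^{2}q^{3}=\dim A(p,q)=\prod_i\dim T_i\cdot\prod_j(\dim L_j)^{2}$ forces $\sum_j v_p(\dim L_j)=1$, so exactly one double, say $D(L)$ with $L=L_{j_0}$, has $v_p(\dim L)=1$. As $(\dim L)^{2}$ divides $p^{2}q^{3}$, the $p$-free part of $\dim L$ has square dividing $q^{3}$, whence $\dim L\in\{p,\,pq\}$. If $\dim L=p$, then $L\cong\Bbbk\Z_p$ by Zhu's theorem on Hopf algebras of prime dimension, so $|G(L)|=p$; if $\dim L=pq$, then by the classification of semisimple Hopf algebras of dimension $pq$ (Etingof--Gelaki) $L$ is trivial, so $\{|G(L)|,|G(L^{*})|\}=\{|\Gamma|,|\Gamma^{ab}|\}$ for a group $\Gamma$ of order $pq$, and in either case $p$ divides $|G(L)|$ or $|G(L^{*})|$. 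This contradicts $p\nmid|G(L)|\,|G(L^{*})|$ established above, so no such decomposition can exist.

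The main obstacle is the opening step: computing $G(A(p,q))$ and $G(A(p,q)^{*})$ explicitly enough to certify that $p$ divides neither order. Once that is secured, the remainder is a clean valuation argument plus the known classifications in dimensions $p$ and $pq$. A subtle point I would flag is the dual trivial factors $\Bbbk^{\Gamma_i}$: there $|G(T_i)|=|\Gamma_i^{ab}|$ can be a power of $q$ even when $p\mid|\Gamma_i|$, so it is essential to use the \emph{dual} count $|G(A(p,q)^{*})|$, and not only $|G(A(p,q))|$, to force $|\Gamma_i|$ itself to be prime to $p$; symmetrically, the undualized count is what rules out nonabelian group-algebra factors.
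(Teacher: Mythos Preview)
Your argument is correct and takes a genuinely different route from the paper's. The paper's proof of this theorem is the one-liner ``by dimension reason and Corollary~\ref{coro4.1}'': it first proves the structural criterion Lemma~\ref{lem4.1} (if $G(H)$ is abelian and every central character of $H$ is trivial on the central grouplikes of $H$, then $H$ has no group-algebra tensor factor), verifies it for $A(p,q)$ to obtain Corollary~\ref{coro4.1}, and then disposes of the remaining possibilities by a terse dimension count. Your approach never invokes that criterion: you compute the orders $|G(A(p,q))|=q^{3}$ and $|G(A(p,q)^{*})|=q^{2}$, use multiplicativity of $G(-)$ under tensor products together with $|G(D(L))|=|G(L)|\,|G(L^{*})|$, and finish with a clean $p$-adic valuation argument combined with the classifications in dimensions $p$ (Zhu) and $pq$ (Etingof--Gelaki). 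The paper's method has the virtue of isolating a reusable structural lemma that rules out group-algebra tensor factors in one stroke, without importing outside classification results; your method is more explicit, treats the dual-of-group-algebra factors and the Drinfel'd-double factors on an equal footing, and makes every step of the ``dimension reason'' visible, at the cost of invoking those two external theorems. Note also that the paper already records $|G(A(p,q))|=q^{3}$ in the proof of Corollary~\ref{coro4.1}, so of your two grouplike counts only the dual one $|G(A(p,q)^{*})|=q^{2}$ requires a separate (and straightforward) verification from Proposition~\ref{thm2.x}.
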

To do this, we introduce following lemma. Denote the center of $H$ as $Z(H)$.

\begin{lemma}\label{lem4.1}
Assume $G(H)$ is an abelian group and $(G(H^\ast)\cap Z(H^\ast))|_{G(H)\cap Z(H)}$ is trivial, i.e. $\alpha(g)=1$ for $\alpha\in G(H^\ast)\cap Z(H^\ast),\;g\in G(H)\cap Z(H)$. Then $H$ is not the form $K\otimes \Bbbk[G]$, where $K$ is a Hopf algebra and $G$ is a finite group with order$\geq $2.
\end{lemma}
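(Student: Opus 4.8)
The plan is to argue by contradiction, so suppose there were a Hopf algebra isomorphism $H \cong K \otimes \Bbbk[G]$ for some Hopf algebra $K$ and some finite group $G$ with $|G| \geq 2$; I would then simply identify $H$ with $K \otimes \Bbbk[G]$, since every statement about $G(H)$, $Z(H)$, $G(H^*)$, $Z(H^*)$ and the evaluation pairing transfers across the isomorphism. The first step is to observe that the group-like elements of a tensor product of Hopf algebras decompose as $G(K \otimes \Bbbk[G]) = G(K) \times G(\Bbbk[G]) = G(K) \times G$; since $G(H)$ is assumed abelian, the direct factor $G$ must be abelian, and as $|G| \geq 2$ it is a nontrivial finite abelian group. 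Recording this reduction first is important, because it is exactly the commutativity of $\Bbbk[G]$ and the resulting good behaviour of the character group $\widehat{G}$ that drives the rest of the argument.

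Next I would exhibit a nonzero pairing between a central group-like element of $H$ and a central group-like element of $H^*$. On the $H$ side, fix $g \in G$ with $g \neq 1$; then $1_K \otimes g$ is group-like (being a tensor of group-likes) and central (since $1_K \in Z(K)$ and $g \in Z(\Bbbk[G])$, the latter because $G$ is abelian), so $1_K \otimes g \in G(H) \cap Z(H)$. On the dual side I would use the finite-dimensional identification $H^* \cong K^* \otimes \Bbbk^G$ of Hopf algebras, under which $G(\Bbbk^G) = \widehat{G} = \Hom(G, \Bbbk^\times)$. Because $\Bbbk$ is algebraically closed of characteristic $0$ and $G$ is finite abelian, $\widehat{G}$ separates the points of $G$, so there exists $\alpha \in \widehat{G}$ with $\alpha(g) \neq 1$. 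The functional $\epsilon_K \otimes \alpha$ is an algebra map $H \to \Bbbk$, hence group-like in $H^*$, and it is central in $K^* \otimes \Bbbk^G$ because $\epsilon_K$ is the unit of $K^*$ and $\Bbbk^G$ is commutative; thus $\epsilon_K \otimes \alpha \in G(H^*) \cap Z(H^*)$.

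Finally I would compute the evaluation $(\epsilon_K \otimes \alpha)(1_K \otimes g) = \epsilon_K(1_K)\,\alpha(g) = \alpha(g) \neq 1$. This contradicts the hypothesis that every element of $G(H^*) \cap Z(H^*)$ restricts trivially to $G(H) \cap Z(H)$, and completes the proof. I expect the only delicate point to be the bookkeeping: verifying that $1_K \otimes g$ and $\epsilon_K \otimes \alpha$ really lie in the two intersections $G(\,\cdot\,) \cap Z(\,\cdot\,)$, and that the pairing is computed correctly under the canonical identification $(K \otimes \Bbbk[G])^* \cong K^* \otimes \Bbbk^G$; the separation-of-points input is the standard Pontryagin duality for finite abelian groups over an algebraically closed field of characteristic zero.
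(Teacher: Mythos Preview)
Your proof is correct and follows essentially the same route as the paper: assume $H=K\otimes\Bbbk[G]$, deduce from the abelianness of $G(H)$ that $G$ is abelian, then exhibit a nontrivial character $\alpha\in\widehat{G}$ so that $\epsilon_K\otimes\alpha\in G(H^{\ast})\cap Z(H^{\ast})$ fails to vanish on $1_K\otimes G\subseteq G(H)\cap Z(H)$. You simply spell out a few steps more carefully (the identification $H^{\ast}\cong K^{\ast}\otimes\Bbbk^G$, the explicit element $1_K\otimes g$, and the Pontryagin-duality input) than the paper does.
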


\begin{proof}
Suppose $H=K\otimes \Bbbk[G]$ for some Hopf algebra $K$ and some finite group $G$ with order$\geq $2. Since $G(H)$ is an abelian group, we know $G$ is also an abelian group. Thus one can check that $G\subseteq Z(H)$. Since the order of $G$ is greater than one, we can find a non-trivial character $\alpha$ on $G$. Obviously $\epsilon\otimes \alpha\in G(H^\ast)\cap Z(H^\ast)$ and $(\epsilon\otimes \alpha)|_{1\otimes G}\neq \{1\}$. This contradicts with our assumption and hence we complete the proof.
\end{proof}

As a corollary, we have:
\begin{corollary} \label{coro4.1}
The Hopf algebras $A(p,q)$ are not the form $K\otimes \Bbbk[G]$, where $K$ is a Hopf algebra and $G$ is a finite group with order$\geq $2.
\end{corollary}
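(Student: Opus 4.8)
The plan is to obtain the corollary as a direct application of Lemma \ref{lem4.1} to $H=A(p,q)$. The entire task then reduces to checking the two hypotheses of that lemma: that $G(A(p,q))$ is abelian, and that $\alpha(g)=1$ for every $\alpha\in G(A(p,q)^*)\cap Z(A(p,q)^*)$ and every $g\in G(A(p,q))\cap Z(A(p,q))$.

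First I would determine $G(A(p,q))$ from the presentation in Proposition \ref{thm2.x} and Corollary \ref{lem3.3.x}. Since $A(p,q)=D(\mathscr{A}_0)/I$ with $D(\mathscr{A}_0)=(\mathscr{A}_0^*)^{cop}\otimes\mathscr{A}_0$ as a coalgebra, its group-likes are the image of $G(\mathscr{A}_0^*)\times G(\mathscr{A}_0)$ after imposing $\chi=x^{-1}$. Concretely they are generated by $x$, by the idempotent characters $c_\gamma:=\sum_{g\in G}\gamma(g)e_g$ for $\gamma\in\widehat{G}$, and by $d_\zeta:=\sum_i\zeta^i z_i$ for $q$-th roots of unity $\zeta$; each is group-like by the coproduct formulas $\Delta(x)=x\otimes x$, $\Delta(e_g)=\sum_h e_h\otimes e_{h^{-1}g}$ and $\Delta(z_i)=\sum_j z_j\otimes z_{i-j}$. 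Pairwise commutativity follows from the explicit relations: $x$ commutes with $c_\gamma$ because $e_gx=xe_{g\triangleleft x}$ while $\gamma(g\triangleleft x)=\gamma(g)$ (the action $\triangleleft$ moves only the $a$-part, and $a\in[G,G]$ forces $\gamma(a)=1$), and $x$ and $c_\gamma$ commute with $d_\zeta$ because the $z_i$ are central among the generators ($z_ix=xz_i$, $z_iy=yz_i$, $e_gz_i=z_ie_g$). Hence $G(A(p,q))$ is abelian.

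Next I would single out the central group-likes on both sides. Inside $A(p,q)$ the elements $c_\gamma$ and $d_\zeta$ are central (using that the $z_i$ are central, and that $e_gy=y\sum_i z_i e_{a^{-1}\triangleleft x^i ga}$ returns $c_\gamma$ after summing against $\gamma$ and applying $\gamma(a)=1$), whereas $x$ is not, since $yx=xy^t$ with $t\not\equiv 1\pmod{p}$; thus $G(A(p,q))\cap Z(A(p,q))=\langle c_\gamma,d_\zeta\rangle$. On the dual side I would first enumerate the characters $\beta:A(p,q)\to\Bbbk$: the idempotent relations force $\beta$ to pick out one $z$-idempotent and one fixed-point idempotent $e_{g_0}$ with $g_0$ a $\triangleleft$-fixed point (so $g_0\in\langle b\rangle$), while $yx=xy^t$ forces $\beta(y)=1$. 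Imposing centrality in $A(p,q)^*$, i.e. $(\beta\otimes\mathrm{Id})\Delta=(\mathrm{Id}\otimes\beta)\Delta$, and testing it against $\Delta(y)=\sum_i y^{t^i}\otimes yz_i$ forces the chosen $z$-index to be $0$ and hence $g_0=1$. So the central characters are exactly the $\beta_\xi$ with $\beta_\xi(x)=\xi$ a $q$-th root of unity, $\beta_\xi(z_i)=\delta_{i,0}$, and $\beta_\xi(e_g)=\delta_{g,1}$.

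The final pairing is then immediate: $\beta_\xi(c_\gamma)=\sum_g\gamma(g)\beta_\xi(e_g)=\gamma(1)=1$ and $\beta_\xi(d_\zeta)=\sum_i\zeta^i\beta_\xi(z_i)=1$, and since $\beta_\xi$ is an algebra map this propagates to all of $\langle c_\gamma,d_\zeta\rangle$; both hypotheses of Lemma \ref{lem4.1} are verified and the corollary follows. I expect the main obstacle to be the dual-side bookkeeping of the third paragraph: correctly listing all characters of $A(p,q)$ and then seeing that centrality, read off the twisted coproduct $\Delta(y)=\sum_i y^{t^i}\otimes yz_i$, collapses the $e_{g_0}$-component to $g_0=1$. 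This is precisely where the nonabelian structure of $G=\Z_p\rtimes\Z_q$, through $a\in[G,G]$ and the $\triangleleft$-action of $F$, is essential, and it is the only genuinely non-routine computation.
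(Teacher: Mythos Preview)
Your strategy matches the paper's: both verify the hypotheses of Lemma~\ref{lem4.1} by computing $G(H)\cap Z(H)$ explicitly and showing every central character of $A(p,q)$ is trivial on it. The paper's execution of the dual side differs slightly: instead of enumerating characters directly from the presentation, it restricts $\beta$ along the injection $i':\mathscr{A}_0^*\hookrightarrow A(p,q)$ and identifies $\beta\circ i'$ with an element of $G(\mathscr{A}_0)\cap Z(\mathscr{A}_0)$, from which $\beta(z_i)=\delta_{i,0}$ drops out; it then finishes $g_0=1$ by a direct computation close to yours. Your route is a bit more self-contained, while the paper's transport-along-$i'$ saves some case analysis.

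There is one small gap in your sketch. Testing centrality of $\beta$ against $\Delta(y)=\sum_i y^{t^i}\otimes yz_i$ forces the $z$-index $i_0=0$, but this alone does not give $g_0=1$. You need one additional step. Either test centrality against $\Delta(e_g)=\sum_h e_h\otimes e_{h^{-1}g}$, which yields $g_0^{-1}g=gg_0^{-1}$ for all $g\in G$ and hence $g_0\in Z(G)=\{1\}$; or first apply $\beta$ (as an algebra map, no centrality needed) to the relation $e_gy=y\sum_i z_i e_{a^{-t^i}ga}$ to obtain $g_0ag_0^{-1}=a^{t^{i_0}}$, whence $g_0=b^{i_0}$, and then invoke $i_0=0$. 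The paper takes essentially the second route. With this filled in, your argument is complete.
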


\begin{proof}
It's not hard to see $G(A(p,q))$ are abelian groups. Next, we will show $(G(H^\ast)\cap Z(H^\ast))|_{G(H)\cap Z(H)}=\{
1\}$ for $H=A(p,q)$ and so we complete the proof by Lemma \ref{lem4.1}.

To do above thing, we will compute $G(H)\cap Z(H)$ firstly. Recall $A(p,q)=D(\mathscr{A}_l)/I$, where $I=D(\mathscr{A}_l)\langle \chi x\rangle^{+}$. Hence one can get $|G(H)|=q^3$ through its definition. For simplicity, we also write $y+I$ as $y$ in $A(p,q)$. Denote the dual basis $\{e_gx^i\}_{g\in G, 1\leq i\leq q}$ of $\mathscr{A}_l$ as $\{E_{e_g;x^i}\}_{g\in G, 1\leq i\leq q}$. To determine $G(H)$, we define $g,h,k\in H$ as follows:
$$g:=\sum_{1\leq i\leq q}\omega^i E_{1;x^i},\; h:=\sum_{1\leq i\leq q}\omega^i E_{b;x^i},\;k=\sum_{1\leq i,j\leq q}\omega^j e_{a^ib^j}.$$
We claim $G(H)=\langle g,h,k\rangle$ as group. For simplicity, we keep the notation of Proposition \ref{thm2.x}. Since the proof of Proposition \ref{thm2.x}, we know that $g=\sum_{1\leq i\leq q}\omega^i z_i,\;h=x^{-1}$. Hence $g,h\in G(H)$. Moreover, it's not hard to see $k\in G(H)$ and  $\{g,h,k\}$ are commute with each other. By definition of $A(p,q)$, we know $\{g^ih^j k^l\}_{1\leq i,j,l\leq q}$ is linear independent. Hence the group $\langle g,h,k\rangle$ has order $q^3$. Since $|G(H)|=q^3$, the claim holds. Next we show $G(H)\cap Z(H)=\langle g,k\rangle$ as group. Note that $E_{a;x}h^i\neq h^iE_{a;x}$ for $1\leq i\leq q$, hence $G(H)\cap Z(H)\subseteq \langle g,k\rangle$. It's not difficult to know $g,k\in Z(H)$, so $G(H)\cap Z(H)=\langle g,k\rangle$ as group. Now let $\beta\in G(H^\ast)\cap Z(H^\ast)$. Define the map $i':\mathscr{A}_l^\ast \rightarrow A(p,q)$ as the following composition $\mathscr{A}_l^\ast\xrightarrow[]{\iota'} D(\mathscr{A}_l)  \xrightarrow[]{\pi'} A(p,q)$, where $\iota'$ is the natural inclusion and $\pi'$ is the natural quotient map. Using the definition of $A(p,q)$, we know that $i'$ is an injective map. If we view the linear function $\beta \circ i'$ as an element of $\mathscr{A}_l$, then we have $\beta \circ i'\in G(\mathscr{A}_l) \cap Z(\mathscr{A}_l)$ since $i'$ is an injective map. It's easy to see $G(\mathscr{A}_l) \cap Z(\mathscr{A}_l)$ is generated by $\sum_{1\leq i,j\leq q}\omega^j e_{a^ib^j}$. This implies $\beta(E_{1;x^i})=\delta_{i,0}$. Hence $\beta(g)=1$. Since $\{e_s\}_{s\in G}$ is an orthonormal family of idempotents, we can assume $\beta(e_s)=\delta_{s,g_0}$ for some $g_0\in G$. Since $e_sx=xe_{s\triangleleft x}$ and $x^p=1$, we know $g_0=b^i$ for some $1\leq i\leq q$. Using $E_{a;1}^n=E_{1;1}$ and $e_{g_0} E_{a;1}=E_{a;1}e_{a^{-1}g_0 a}$ in $A(p,q)$, we know $\beta(e_s)=\delta_{s,1}$. Hence $\beta(k)=1$ by definition of $k$. So $(G(H^\ast)\cap Z(H^\ast))|_{G(H)\cap Z(H)}=\{
1\}$.
\end{proof}

Now we can prove the main result of this subsection.\\
\textbf{Proof of Theorem \ref{pro4.2.x}.} Since dimension reason and Corollary \ref{coro4.1}, we get what we want.
\qed



\end{document}